\newtheorem{theorem}{Theorem}[section]
\newtheorem*{thm}{Theorem A}
\newtheorem*{thmb}{Theorem B}
\newtheorem*{q}{Question}
\newtheorem{corollary}[theorem]{Corollary}
\newtheorem{lemma}[theorem]{Lemma}
\newtheorem{proposition}[theorem]{Proposition}
\newtheorem{conjecture}[theorem]{Conjecture}
\theoremstyle{definition}
\newtheorem{definition}[theorem]{Definition}
\newtheorem{remark}[theorem]{Remark}
\def\C{{\mathbb{C}}}
\def\M{{\mathbb{M}}}
\def\N{{\mathbb{N}}}
\def\R{{\mathbb{R}}}
\def\cC{{\mathcal{C}}}
\def\P{{\mathcal{P}}}
\def\Z{{\mathcal{Z}}}
\newcommand{\vertiii}[1]{{\left\vert\kern-0.25ex\left\vert\kern-0.25ex\left\vert #1 
    \right\vert\kern-0.25ex\right\vert\kern-0.25ex\right\vert}}
\begin{document}

\title[Lattice isomorphisms]{Lattice isomorphisms between projection lattices of von Neumann algebras}

\author[M. Mori]{Michiya Mori}

\address{Graduate School of Mathematical Sciences, The University of Tokyo, 3-8-1 Komaba, Tokyo, 153-8914, Japan.}
\email{mmori@ms.u-tokyo.ac.jp}

%\thanks{}
\subjclass[2010]{Primary 46L10, Secondary 16E50, 47B49, 51D25.} 

\keywords{projection lattice; lattice isomorphism; ring isomorphism; locally measurable operator; von Neumann algebra}

\date{}

\begin{abstract}
Generalizing von Neumann's result on type II$_1$ von Neumann algebras, we characterize lattice isomorphisms between projection lattices of arbitrary von Neumann algebras by means of ring isomorphisms between the algebras of locally measurable operators. 
Moreover, we give a complete description of ring isomorphisms of locally measurable operator algebras when the von Neumann algebras are without type II direct summands. 
\end{abstract}

\maketitle
\thispagestyle{empty}

\section{Introduction}
Since the very first work \cite{MN} by Murray and von Neumann more than 80 years ago, the geometry of projections has played the central role in understanding the structure of von Neumann algebras (rings of operators). 
For a von Neumann algebra $M$, let $\P(M)$ denote the projection lattice of $M$, that is, $\P(M):=\{p\in M\mid p=p^*=p^2\}$. 
In this paper, we would like to consider the following question: 
What is the general form of lattice isomorphisms between projection lattices of von Neumann algebras? 

There are several important results related to this question. 
Let us first think about finite dimensional factors. 
The case $M=N=\M_n(\C)$ for $n=1, 2$ is not interesting at all. 
Indeed, if $n=1$, then $\P(\M_n(\C))$ is $\{0, 1\}$, and a lattice automorphism of it is the identity mapping. 
If $n=2$, then a bijection $\Phi$ on $\P(\M_n(\C))$ is a lattice automorphism if and only if $\Phi(0)=0$ and $\Phi(1)=1$. 
If $M=N= \M_n(\C)$ for $3\leq n<\infty$, then the fundamental theorem of projective geometry gives an answer to our question.
Recall that a function $f\colon X\to Y$ between complex vector spaces is said to be \emph{semilinear} if it is additive and there exists a ring homomorphism $\sigma\colon \C\to \C$ satisfying $f(cx) = \sigma(c)f(x)$ for all $c\in \C$ and $x\in X$. 
\begin{theorem}[Fundamental theorem of projective geometry]
Let $3\leq n<\infty$. 
Suppose that $\Phi\colon \P(\M_n(\C))\to \P(\M_n(\C))$ is a lattice isomorphism. 
Then there exists a semilinear bijection $f\colon \C^n\to \C^n$ such that $\Phi(p_{\xi}) = p_{f(\xi)}$ for every $\xi\in \C^n$, where $p_{\xi}$ denotes the projection from $\C^n$ onto $\C \xi$ for a vector $\xi\in \C^n$.  
\end{theorem}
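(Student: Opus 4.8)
The plan is to reduce the assertion to the classical synthetic form of the fundamental theorem of projective geometry. First I would pass from the projection lattice to the subspace lattice: the map $p\mapsto p\C^n$ (the range of $p$) is a lattice isomorphism from $\P(\M_n(\C))$ onto the lattice $L$ of all linear subspaces of $\C^n$ ordered by inclusion, and under this identification the rank-one projection $p_\xi$ corresponds to the line $\C\xi$. So the lattice isomorphism $\Phi$ transports to a lattice automorphism $\Psi$ of $L$, and it suffices to produce a semilinear bijection $f\colon\C^n\to\C^n$ with $\Psi(V)=f(V)$ for every subspace $V$; specializing to one-dimensional $V$ then gives $\Phi(p_\xi)=p_{f(\xi)}$ (note that bijectivity of $f$ forces the associated ring homomorphism $\sigma\colon\C\to\C$ to be bijective, so $f(\C\xi)=\sigma(\C)f(\xi)=\C f(\xi)$).

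Next I would observe that $\Psi$ preserves the dimension of a subspace: $\dim V$ is the common length of the maximal chains from $0$ to $V$ in $L$, a purely order-theoretic quantity, so it is invariant under any lattice automorphism. Consequently $\Psi$ carries lines to lines, two-dimensional subspaces (``projective lines'') to two-dimensional subspaces, and hyperplanes to hyperplanes, preserving all incidences. Thus $\Psi$ restricts to a collineation of the projective space $\mathbb{P}^{n-1}(\C)$, i.e.\ a bijection of its points mapping projective lines bijectively onto projective lines; here three points are collinear precisely when they lie below a common two-dimensional subspace, a condition visible to $\Psi$.

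The core is to promote this collineation to a semilinear map, and this is where the hypothesis $n\geq 3$ is essential: the projective dimension $n-1$ is then at least $2$, so there is a genuine incidence geometry of points and lines, and $\mathbb{P}^{n-1}(\C)$ is Desarguesian (automatically so when $n-1\geq 3$, and because $\C$ is a field when $n-1=2$). I would fix a hyperplane $H\subset\C^n$; since $\Psi(H)$ is again a hyperplane and $\mathrm{GL}_n(\C)$ acts transitively on hyperplanes, I may compose $\Psi$ with a suitable element of $\mathrm{PGL}_n(\C)$ — which is itself induced by a linear bijection, hence harmless for the conclusion — and so assume $\Psi(H)=H$. Then $\Psi$ preserves parallelism of affine lines in the affine space obtained by deleting $H$, since two affine lines are parallel exactly when the corresponding projective lines meet on $H$. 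On this affine space I would reconstruct the vector-space structure synthetically: vector addition via the parallelogram construction, and the scalar field $\C$ together with its ring operations via the coordinatization of a single affine line (the von Staudt--Hilbert ``algebra of throws''). Desargues's theorem guarantees that these constructions are independent of the auxiliary choices made, so $\Psi$ respects them; the self-map $\sigma$ it induces on the recovered copy of $\C$ is then a ring homomorphism, and reassembling the data yields the desired semilinear bijection $f$.

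The main obstacle is precisely this last reconstruction — verifying that the synthetically defined addition and multiplication are well-defined and that $\sigma$ is additive and multiplicative — which is the classical heart of the fundamental theorem and rests on Desarguesian configuration arguments; the assumption $n\geq 3$ is used exactly to have these available (for $n=2$ the lattice $L$ retains no incidence information, every point-permutation fixing $0$ and $1$ is a lattice automorphism, and the statement genuinely fails, consistent with the remarks above). In the write-up I would either carry out the von Staudt--Hilbert reconstruction in detail or invoke the standard references for it, since only the statement as displayed is needed in what follows.
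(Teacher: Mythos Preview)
The paper does not prove this statement: it is quoted in the introduction as the classical fundamental theorem of projective geometry, with no proof given, and is used only as motivating background. So there is no ``paper's own proof'' to compare your proposal against. Your outline is a correct sketch of the standard argument (pass to the subspace lattice, observe that a lattice automorphism preserves dimension and hence induces a collineation of $\mathbb{P}^{n-1}(\C)$, then invoke the von Staudt--Hilbert coordinatization to produce the semilinear bijection), and your identification of the $n\geq 3$ hypothesis with the availability of a genuine incidence geometry is exactly right.
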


In the case of type I$_{\infty}$ factors, we can make use of a result below by Fillmore and Longstaff in 1984. 
Recall that a projection $p\in \P(B(H))$ can be identified with its range $pH$, which is a closed subspace of $H$. 
\begin{theorem}[{\cite[Theorem 1]{FL}}]
Let $X$ and $Y$ be infinite dimensional complex normed spaces. 
Let $\cC(X)$ (resp.\ $\cC(Y)$) denote the lattice of all closed subspaces of $X$ (resp.\ $Y$), ordered by inclusion. 
Suppose that $\Phi\colon \cC(X)\to \cC(Y)$ is a lattice isomorphism.
Then there exists a bicontinuous linear or conjugate-linear bijection $f\colon X\to Y$ such that $\Phi(C)=f(C)$ for any $C\in \cC(X)$. 
\end{theorem}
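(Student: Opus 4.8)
The plan is to reconstruct a semilinear map from the projective geometry encoded in $\cC(X)$, and then to wring continuity out of the one piece of metric information present: that $\Phi$ carries \emph{closed} subspaces to \emph{closed} subspaces. First, the atoms of $\cC(X)$ are exactly the one-dimensional subspaces, and an element of $\cC(X)$ has finite length in the lattice if and only if it is finite-dimensional, the length then being the dimension. Since a lattice isomorphism is in particular an order isomorphism, $\Phi$ maps atoms to atoms, restricts to a lattice isomorphism between the lattices of finite-dimensional subspaces of $X$ and of $Y$, and on atoms preserves the incidence of a line with a plane. As $\dim X\ge 3$, the fundamental theorem of projective geometry (in the form valid for projective spaces of arbitrary dimension; equivalently, applied to an exhausting chain of finite-dimensional subspaces and then glued) yields a semilinear bijection $g\colon X\to Y$, with associated automorphism $\sigma$ of $\C$, that induces $\Phi$ on atoms and hence satisfies $\Phi(F)=g(F)$ for every finite-dimensional $F\le X$.

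Next I would pass to all of $\cC(X)$. Any $C\in\cC(X)$ is the join in $\cC(X)$ of the finite-dimensional subspaces it contains, and order isomorphisms preserve arbitrary joins; since that family is directed and its union is all of $C$, one gets $\Phi(C)=\bigvee\{g(F):F\le C,\ \dim F<\infty\}=\overline{g(C)}$. Running the same argument for the lattice isomorphism $\Phi^{-1}$ gives a semilinear bijection $\tilde g$ with $\Phi^{-1}(D)=\overline{\tilde g(D)}$; since $\tilde g\circ g$ fixes every line it is a scalar, so $\Phi^{-1}(D)=\overline{g^{-1}(D)}$. Consequently, for every $C\in\cC(X)$,
\[
C=\Phi^{-1}(\Phi(C))=\overline{g^{-1}\bigl(\overline{g(C)}\bigr)}\supseteq g^{-1}\bigl(\overline{g(C)}\bigr)\supseteq g^{-1}(g(C))=C,
\]
so all of these are equalities, and in particular $g(C)=\overline{g(C)}$. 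Hence $g$, and by symmetry $g^{-1}$, send closed subspaces onto closed subspaces, and $\Phi(C)=g(C)$ for every $C\in\cC(X)$.

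It remains to show that $g$ is linear or conjugate-linear and bicontinuous, and here lies the real work. I would first prove $\sigma$ continuous. A discontinuous field automorphism of $\C$ is unbounded on every interval of $\R$, so, assuming $\sigma$ discontinuous, fix a normalized basic sequence $(e_n)$ in $X$ with closed span $C$. Then $g$ transports the (algebraic, hence topological) splitting $C=\operatorname{span}\{e_1,\dots,e_N\}\oplus\overline{\operatorname{span}}\{e_n:n>N\}$ to an analogous splitting of the closed subspace $g(C)$, producing bounded projections $P_N$ on $g(C)$ with $P_N\circ g=g\circ S_N$, where $S_N$ is the $N$-th partial-sum projection of $(e_n)$. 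Choose reals $t_N\in(0,2^{-N})$ with $|\sigma(t_N)|\,\|g(e_N)\|>N\,(\|P_N\|+\|P_{N-1}\|)$ and set $v:=\sum_N t_Ne_N\in C$; since $P_N(g(v))=\sum_{n\le N}\sigma(t_n)g(e_n)$, the estimate
\[
|\sigma(t_N)|\,\|g(e_N)\|=\|P_N(g(v))-P_{N-1}(g(v))\|\le(\|P_N\|+\|P_{N-1}\|)\,\|g(v)\|
\]
forces $N<\|g(v)\|$ for all $N$, which is absurd. Hence $\sigma$ is continuous, so $\sigma=\mathrm{id}$ or $\sigma$ is complex conjugation, and $g$ is linear or conjugate-linear. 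Finally, for a nonzero $\psi\in Y^*$ the hyperplane $\ker(\psi\circ g)=g^{-1}(\ker\psi)$ is closed, hence $\psi\circ g$ is a bounded (conjugate-)linear functional; so $\{g(x):\|x\|\le 1\}$ is weakly bounded in $Y$, hence norm bounded by the uniform boundedness principle, so $g$ is bounded, and likewise $g^{-1}$. Taking $f:=g$ completes the argument. The crux is this continuity step: the lattice sees no metric data beyond ``closed goes to closed'', and turning that into boundedness of $g$ together with tameness of $\sigma$ is the only genuine obstacle, the remainder being projective geometry and soft functional analysis. (For incomplete normed spaces one passes in addition to completions, with the small care taken in \cite{FL}.)
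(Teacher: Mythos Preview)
The paper does not prove this theorem; it is quoted in the introduction as a known result from \cite{FL}, so there is no proof in the paper to compare against.

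On its own merits your argument is correct. The projective-geometry step producing the semilinear bijection $g$, the closure argument showing that $g$ and $g^{-1}$ already carry closed subspaces onto closed subspaces (via $C=\Phi^{-1}(\Phi(C))=\overline{g^{-1}(\overline{g(C)})}$), the basic-sequence construction forcing $\sigma$ to be continuous, and the closed-hyperplane plus uniform-boundedness step giving norm boundedness of $g$ are all valid; this is essentially the strategy of \cite{FL}. Two small remarks. First, in the $\sigma$-continuity step you silently use that the projections $P_N$ are defined independently of the $t_N$, so the choice $|\sigma(t_N)|\,\|g(e_N)\|>N(\|P_N\|+\|P_{N-1}\|)$ is legitimate; it may be worth saying this explicitly. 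Second, the one genuine loose end is the incomplete case: to run the basic-sequence argument you need $v=\sum t_Ne_N$ to lie in $X$, and to apply your hyperplane argument you need enough closed hyperplanes. Passing to completions requires checking that $\Phi$ induces a lattice isomorphism $\cC(\hat X)\to\cC(\hat Y)$ and that the resulting bicontinuous $f\colon\hat X\to\hat Y$ restricts to a bijection $X\to Y$; neither is entirely automatic, but as you note this is precisely the extra care taken in \cite{FL}.
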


See also the classical result \cite[Theorem 1]{KM}, in which Kakutani and Mackey studied orthocomplementation on the lattice $\P(B(H))$. 

For type I factors, we may observe a correspondence between lattices and rings. 
Let $H$ be a Hilbert space with $\dim H\geq 3$. 
For any lattice automorphism $\Phi\colon \P(B(H))\to \P(B(H))$,
take a mapping $f\colon H\to H$ as above. 
It is a semilinear bijection if $\dim H<\infty$; a linear or conjugate-linear bounded bijection if $\dim H=\infty$. 
Hence we may construct a ring automorphism $\Psi\colon B(H)\to B(H)$ such that $\Phi(l(x))= l(\Psi(x))$ for every $x\in B(H)$ (namely, $\Psi(x) := f\circ x\circ f^{-1}$), where $l(x)$ denotes the left support projection of $x$.  
It is easy to see that the converse also holds. 
That is, any ring automorphism $\Psi\colon B(H)\to B(H)$ determines a lattice automorphism $\Phi$ of $\P(B(H))$ such that $\Phi(l(x))= l(\Psi(x))$ for every $x\in B(H)$. 

We next consider finite von Neumann algebras. 
In 1930's, motivated by the geometry of projection lattices of type II$_1$ factors, 
von Neumann produced the beautiful theory on the correspondence between complemented modular lattices and regular rings. 
One of his achievements \cite[Part II, Theorem 4.2]{N}, applied to the case of arbitrary type II$_1$ von Neumann algebras, reads as follows. 
\begin{theorem}[von Neumann]\label{vN}
Let $M$ and $N$ be von Neumann algebras of type II$_1$. 
Suppose that $\Phi\colon \P(M)\to \P(N)$ is a lattice isomorphism. 
Then there exists a unique ring isomorphism $\Psi\colon S(M)\to S(N)$ between the algebras of measurable operators such that $\Phi(l(x))=l(\Psi(x))$ for any $x\in S(M)$.
\end{theorem}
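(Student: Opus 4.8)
The plan is to deduce the theorem from von Neumann's abstract coordinatization theorem for complemented modular lattices, using as a bridge the fact that $\P(M)$ is (isomorphic to) the lattice of principal right ideals of the $*$-regular ring $S(M)$. So the first thing I would record is the standard structure theory of measurable operators over a finite von Neumann algebra: for finite $M$, $S(M)$ is a unital $*$-regular ring; a self-adjoint idempotent of $S(M)$ has spectrum in $\{0,1\}$ and hence lies in $M$, so $\P(S(M))=\P(M)$; and $p\mapsto pS(M)$ is an order isomorphism of $\P(M)$ onto the lattice $L(S(M))$ of principal (equivalently, finitely generated) right ideals of $S(M)$, under which $l(x)$ corresponds to $xS(M)$. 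Doing the same for $N$ and composing, the hypothesis turns $\Phi$ into a lattice isomorphism $\bar\Phi\colon L(S(M))\to L(S(N))$, and the conclusion to be proved becomes exactly: $\bar\Phi$ is induced by a unique ring isomorphism $\Psi\colon S(M)\to S(N)$ via $\Psi(x)S(N)=\bar\Phi(xS(M))$.

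Next I would verify the hypotheses of von Neumann's theorem. Finiteness of $M$ makes $\P(M)$ modular (the parallelogram law), and $p\mapsto 1-p$ makes it complemented. Because $M$ is of type II, hence continuous, the identity is halvable; halving it twice produces pairwise orthogonal, pairwise Murray--von Neumann equivalent projections $e_1,\dots,e_4$ with $e_1+e_2+e_3+e_4=1$, and since equivalence coincides with perspectivity in the finite lattice $\P(M)$, the quadruple $(e_1,\dots,e_4)$ is a homogeneous basis of order $4$. Thus $\P(M)$, and likewise $\P(N)$, is a complemented modular lattice admitting a homogeneous basis of order $\geq 4$ --- precisely the setting of \cite[Part~II]{N}.

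Now I would apply von Neumann's theorem: any such lattice is coordinatized by a von Neumann regular ring, unique up to isomorphism, and, taking the frame $(e_1,\dots,e_4)$ on the source and its $\Phi$-image on the target, the frame-preserving lattice isomorphism $\bar\Phi$ is induced by a ring isomorphism of the corresponding coordinatizing rings. Since the tautological coordinatization $p\leftrightarrow pS(M)$ already displays $S(M)$ as a regular ring coordinatizing $\P(M)$, the uniqueness clause lets me identify the abstract coordinatizing ring with $S(M)$, and likewise with $S(N)$ for $N$; transporting the induced ring isomorphism through these identifications yields $\Psi$ with $\Phi(l(x))=l(\Psi(x))$. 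For uniqueness, if $\Psi_1,\Psi_2$ both work then $\theta:=\Psi_1\Psi_2^{-1}$ is a ring automorphism of $S(N)$ fixing every principal right ideal; a short idempotent computation forces $\theta$ to fix every idempotent, then (testing against \emph{graph} idempotents $p+cu$ with $u^2=0$) to be $\C$-linear and to fix all of $N$, and finally, via the Ore description of $S(N)$ over $N$, to fix all of $S(N)$, so $\theta=\mathrm{id}$.

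The substantive difficulty is entirely inside the result being cited: von Neumann's reconstruction of the regular ring --- its addition and multiplication --- from the perspectivity geometry of the auxiliary sublattices of $L$ attached to the frame, and the naturality of that construction in $L$. Within the reduction above, the point that needs the most care is the identification in the third step: one must check that for $R=S(M)$ the lattice-theoretic arithmetic of von Neumann's coordinates agrees with the ring operations of $S(M)$, i.e.\ that the abstract coordinatizing ring of $\P(M)$ really is $S(M)$ equipped with its standard coordinatization $p\mapsto pS(M)$, so that the ring isomorphism one extracts genuinely intertwines $\Phi$ with the map $x\mapsto l(x)$ rather than merely matching the two principal-ideal lattices abstractly.
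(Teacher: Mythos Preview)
Your approach is essentially the one the paper takes: identify $\P(M)$ with the principal right ideal lattice of the regular ring $S(M)=LS(M)$ and invoke von Neumann's theory from \cite[Part~II]{N}. One simplification worth noting: the paper cites directly von Neumann's theorem that a lattice isomorphism between the right ideal lattices of two regular rings of order $n\geq 3$ is induced by a \emph{unique} ring isomorphism (\cite[Part~II, Theorem~4.2]{N}); since $S(M)$ and $S(N)$ are already known regular rings whose right ideal lattices are $\P(M)$ and $\P(N)$, there is no need to pass through the abstract coordinatization theorem (order $\geq 4$) and then argue that the coordinatizing ring agrees with $S(M)$ --- the step you flag as ``needing the most care'' simply does not arise, and uniqueness comes for free without your separate idempotent argument.
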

See the next section for the definition of undefined terms and see also Section \ref{question} for further details about von Neumann's theory.   

In the general setting of von Neumann algebras, with an additional assumption, Dye obtained the following result in 1955. 
\begin{theorem}[{\cite[Corollary of Theorem 1]{Dy}}, see also {\cite[Theorem 1]{Fe}}]\label{dye}
Let $M$ and $N$ be von Neumann algebras without type I$_2$ direct summands. 
Suppose that $\Phi\colon \P(M)\to \P(N)$ is a lattice isomorphism with 
\[
pq=0 \iff \Phi(p)\Phi(q)=0
\]
for any $p, q\in \P(M)$. 
Then there exists a real $^*$-isomorphism $\Psi\colon M\to N$ that extends $\Phi$. 
\end{theorem}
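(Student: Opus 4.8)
The plan is to first promote $\Phi$ to an orthoisomorphism, then split the problem along the type decomposition, and finally treat the type I part by projective geometry and the type II and type III parts by coordinatization. For the first step I would note that a lattice isomorphism preserves the smallest and largest elements, so $\Phi(0)=0$ and $\Phi(1)=1$; since $p\vee q=p+q$ whenever $pq=0$, the identity $q=1-p$ holds precisely when $pq=0$ and $p\vee q=1$, and because $\Phi$ preserves joins, the top element, and (in both directions) orthogonality, it follows that $\Phi(1-p)=1-\Phi(p)$ for every $p$, hence also $\Phi(p+q)=\Phi(p)+\Phi(q)$ whenever $pq=0$. Thus $\Phi$ is an orthoisomorphism and is finitely orthoadditive. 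Moreover, commutation of projections is expressible through $\wedge$, $\vee$ and $\perp$ (for instance $p$ and $q$ commute iff $p=(p\wedge q)\vee(p\wedge(1-q))$), so $\Phi$ sends central projections to central projections; its restriction to the Boolean algebra $\P(\Z(M))$ is a complete Boolean isomorphism, hence induced by a $^*$-isomorphism $\Z(M)\to\Z(N)$, and since the type decomposition of a von Neumann algebra is determined by its ortholattice of projections, this identifies the central projections cutting out the type I$_n$ ($n\geq 3$), type I$_\infty$, type II and type III summands of $M$ with those of $N$. It therefore suffices to construct $\Psi$ over each of these pieces; the abelian (type I$_1$) piece is handled by the same Boolean duality, and type I$_2$ is excluded.

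\emph{The type I part.} Suppose first $M\cong B(H)$ is a type I factor with $\dim H\geq 3$, so that $N\cong B(H')$ with $\dim H'=\dim H$. Identifying projections with their ranges, I would invoke the fundamental theorem of projective geometry (if $\dim H<\infty$) or the Fillmore--Longstaff theorem (if $\dim H=\infty$) to get a semilinear, respectively bicontinuous linear or conjugate-linear, bijection $f\colon H\to H'$ with $\Phi(C)=f(C)$ on closed subspaces. The hypothesis $pq=0\Leftrightarrow\Phi(p)\Phi(q)=0$ then says $f$ preserves orthogonality of vectors in both directions; a classical uniqueness result for the associated form forces the field automorphism attached to $f$ to be trivial or complex conjugation and $f$ itself to be a scalar multiple of a unitary or an anti-unitary, so that $\Psi(x):=f\circ x\circ f^{-1}$ is a complex-linear or conjugate-linear $^*$-isomorphism extending $\Phi$. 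For a general homogeneous type I$_n$ algebra I would write $M\cong\Z(M)\,\overline{\otimes}\,\M_n(\C)$, transport a coordinatizing system of $n\times n$ matrix units through $\Phi$ (encoding each off-diagonal partial isometry by its graph projection), and glue the resulting fibrewise maps over the $^*$-isomorphism of centres from the first step; summing over $n$ disposes of all of type I.

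\emph{The type II and type III parts.} These are the substantial cases, since $M$ now has no minimal projections. Here I would coordinatize by dyadic matrix units: for each $k$, $M$ contains a unital copy of $\M_{2^k}(\C)$ whose diagonal is a family of mutually orthogonal, mutually equivalent projections summing to $1$, chosen coherently in $k$; using orthoadditivity together with the fact that Murray--von Neumann comparison of projections is reflected in $\P(M)$ --- through the relative dimension function in the finite case and through the behaviour of halvings of properly infinite projections in the type II$_\infty$ and type III cases --- I would transport such a coherent tower through $\Phi$. This produces a map $\Psi_0$ on the norm-dense $^*$-subalgebra $A\subseteq M$ generated by the tower and $\Z(M)$, which one checks to be multiplicative and $^*$-preserving (complex-linear or conjugate-linear over each central band, as dictated by the first step) and, crucially, norm-isometric, because $\Phi$ preserves exactly the order and comparison data that compute the norm on $A$. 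Then $\Psi_0$ extends by continuity to a real $^*$-isomorphism $\Psi\colon M\to N$, which agrees with $\Phi$ on $\P(M)$ since both are order isomorphisms and they coincide on the projections of $A$. For the type II$_1$ summand one may instead shortcut this by invoking von Neumann's Theorem~\ref{vN} to obtain a ring isomorphism $S(M)\to S(N)$ with $\Phi(l(x))=l(\Psi(x))$ and then using the orthogonality hypothesis to see that it restricts to a real $^*$-isomorphism of $M$ onto $N$. Assembling the pieces over the central decomposition --- a direct sum of complex-linear and conjugate-linear $^*$-isomorphisms is real-linear, multiplicative and $^*$-preserving --- yields the required $\Psi$.

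\emph{Main obstacle.} I expect the type II and type III step to be the crux: both producing a globally coherent coordinatizing tower and proving that the resulting algebraic isomorphism is norm-continuous demand that one recover metric information (comparison of projections, relative dimension) from the ortholattice alone. This is also exactly where the exclusion of type I$_2$ is indispensable --- for $\M_2(\C)$ the projection lattice together with orthogonality reduces to little more than the two-sphere with its antipodal involution, which admits many orthoisomorphisms not induced by unitaries or anti-unitaries, so the conclusion fails there.
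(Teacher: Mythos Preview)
Your route is genuinely different from the paper's, and the paper's is considerably shorter. The paper does not split by type at all: it applies Theorem~A directly (so it only treats the case without commutative summand) to obtain a ring isomorphism $\Psi\colon LS(M)\to LS(N)$ with $\Phi(l(x))=l(\Psi(x))$; the orthogonality hypothesis then forces $\Psi(p)=\Phi(p)$ for every projection $p$; the ring automorphism $x\mapsto\Psi^{-1}(\Psi(x^*)^*)$ of $LS(M)$ therefore fixes all projections, so by Lemma~\ref{id} it is the identity, i.e.\ $\Psi(x^*)=\Psi(x)^*$; since a $^*$-preserving ring isomorphism preserves squares of self-adjoints and hence order, and $\Psi(1)=1$, it restricts to a real $^*$-isomorphism $M\to N$ extending $\Phi$. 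In other words, the ``shortcut'' you mention for type II$_1$ via Theorem~\ref{vN} is exactly the paper's global strategy, with Theorem~A replacing Theorem~\ref{vN}.

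Your type-by-type plan, by contrast, has a real gap in the type II and type III step that you partly flag but do not close. Two issues. First, ``transporting a coherent matrix-unit tower through $\Phi$'' is not just a matter of moving the diagonal projections: to pin down the off-diagonal partial isometries compatibly with $\Phi$ you must encode them by their graph projections and know that the $\Phi$-images are again graph projections --- this is precisely the LS-orthogonality machinery underlying Theorem~A, and without it you only get \emph{some} $^*$-isomorphism of the UHF subalgebras that agrees with $\Phi$ on the diagonal projections, not one that agrees with $\Phi$ on all projections of the subalgebra. Second, even granting a $\Phi$-compatible $^*$-isomorphism $\Psi_0$ on a norm-dense $A$, you still owe an argument that its continuous extension agrees with $\Phi$ on \emph{every} projection of $M$, not merely those approximable inside $A$; the claim that ``$\Phi$ preserves exactly the order and comparison data that compute the norm on $A$'' does not address this, and in fact the norm of a general element of $A$ is not recoverable from projection comparison alone. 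The paper's approach avoids both problems because the ring isomorphism on $LS(M)$ is global from the start and boundedness falls out of $^*$-preservation a posteriori.
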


Each of the above results implies that lattice isomorphisms between projection lattices are closely related to ring isomorphisms. 
See also McAsey's survey \cite{Mc} which discusses projection lattice isomorphisms in various settings.   
It is natural to imagine that we can give a similar result for arbitrary lattice isomorphisms in the general setting of von Neumann algebras. 
The main theorem of this paper realizes it. 
\begin{thm}
Let $M$ and $N$ be two von Neumann algebras.  
Suppose that $M$ does not admit type I$_1$ nor I$_2$ direct summands, and that $\Phi\colon \P(M)\to \P(N)$ is a lattice isomorphism. 
Then there exists a unique ring isomorphism $\Psi\colon LS(M)\to LS(N)$ such that $\Phi(l(x)) = l(\Psi(x))$ for all $x\in LS(M)$. 
\end{thm}
Here, $LS(M)$ and $LS(N)$ mean the algebras of locally measurable operators of $M$ and $N$, respectively (see Section \ref{LSdef}). 
We remark that the converse of Theorem A can be verified without difficulty. 
Namely, any ring isomorphism $\Psi\colon LS(M)\to LS(N)$ determines a unique lattice isomorphism $\Phi\colon \P(M)\to \P(N)$ such that $\Phi(l(x)) = l(\Psi(x))$ for all $x\in LS(M)$ (Proposition \ref{converse}). 
Therefore, Theorem A naturally gives rise to the following 
\begin{q}
Let $M$, $N$ be von Neumann algebras. 
What is the general form of ring isomorphisms from $LS(M)$ onto $LS(N)$?
\end{q}
We may answer this question for type I von Neumann algebras using ring isomorphisms of their centers (Proposition \ref{I}). 
Moreover, we obtain  
\begin{thmb}
Let $M, N$ be von Neumann algebras of type I$_{\infty}$ or III. 
If $\Psi\colon LS(M)\to LS(N)$ is a ring isomorphism, then there exist a real $^*$-isomorphism $\psi\colon M\to N$ (which extends to a real $^*$-isomorphism from $LS(M)$ onto $LS(N)$) and an invertible element $y\in LS(N)$ such that  $\Psi(x)=y\psi(x)y^{-1}$, $x\in LS(M)$.
\end{thmb}
We leave the case of type II von Neumann algebras as an open question. 

In Section \ref{preliminaries}, we introduce some tools we use later. 
Section \ref{main} is devoted to the proof of Theorem A. 
The proof is based on the combination of von Neumann's strategy in \cite[Part II, Chapter IV]{N} and a binary relation on the projection lattice which we call LS-orthogonality. 
After that we give a proof of Dye's theorem as an application of Theorem A.
We consider Question in Section \ref{extra}, and prove Theorem B.
This paper ends with comparison of our result with von Neumann's theory and several suggestions of further research directions (Section \ref{question}).

\section{Preliminaries}\label{preliminaries}
Let $M\subset B(H)$ be a von Neumann algebra. 
We use the symbols $\sim$ to mean the Murray--von Neumann equivalence relation on $\P(M)$. 
That is, for $p, q\in \P(M)$, $p\sim q$ means that there exists a partial isometry $v\in M$ such that $p= vv^*$ and $q= v^*v$. 
As usual, for $p, q\in \P(M)$, $p\perp q$ means that $p$ and $q$ are orthogonal. That is, $pq=qp=0$, or equivalently, $pH\perp qH$ in the Hilbert space $H$. 
We use the symbol $p^{\perp}:= 1-p$ for $p\in \P(M)$. 
The symbol $\Z(M)=\{x\in M\mid xy=yx\text{ for all }y\in M\}$ means the center of $M$.

For $n\in \N=\{1, 2, \ldots\}$, we say that $M$ has \emph{order $n$} if there exists a collection $p_1, \ldots, p_n$ of mutually orthogonal projections in $M$ such that $p_1\sim p_2\sim\cdots\sim p_n$ and $\sum_{k=1}^n p_k= 1$. 
It is well known that every von Neumann algebra without finite type I direct summands has order $n$ for any $n\in \N$ \cite[Lemma 6.5.6]{KR}. 
In particular, such an algebra has order $3$. 
It follows that every von Neumann algebra $M$ without type I$_1$ and I$_2$ direct summands can be decomposed into the ($\ell^{\infty}$-)direct sum of von Neumann algebras $M_n$, $3\leq n<\infty$, such that $M_n$ has order $n$ for every $n$. 
If $M$ has order $n\in \N$, then $M$ can be identified with the algebra $\M_n(\hat{M})$ of $n\times n$ matrices with entries in some von Neumann algebra $\hat{M}$.

\subsection{Various isomorphisms of von Neumann algebras}
For $^*$-algebras $A$ and $B$, a (not necessarily linear) bijection $\psi\colon A\to B$ is called
\begin{itemize}
\item a \emph{semigroup isomorphism} if it is multiplicative,
\item a \emph{ring isomorphism} if it is additive and multiplicative,
\item a \emph{real algebra isomorphism} if it is a real-linear ring isomorphism, 
\item an \emph{algebra isomorphism} if it is a complex-linear ring isomorphism, 
\item a \emph{real $^*$-isomorphism} if it is a real algebra isomorphism and satisfies $\psi(x^*)=\psi(x)^*$ for any $x\in A$, 
\item a \emph{$^*$-isomorphism} if it is a complex-linear real $^*$-isomorphism, and
\item a \emph{conjugate-linear $^*$-isomorphism} if it is a conjugate-linear real $^*$-isomorphism. 
\end{itemize}

\begin{lemma}\label{isoms}
Let $M$ and $N$ be von Neumann algebras. 
Suppose that $\psi\colon M\to N$ is a bijection. 
\begin{enumerate}
\item If $M$ is without type I$_1$ direct summands and $\psi$ is a semigroup isomorphism, then $\psi$ is a ring isomorphism. 
\item If $M$ does not admit a finite dimensional ideal and $\psi$ is a ring isomorphism, then $\psi$ is a real algebra isomorphism. 
\item If $\psi$ is a real algebra isomorphism, then there exist a real $^*$-isomorphism $\psi_0\colon M\to N$ and an invertible element $y\in N$ such that $\psi(x) =y\psi_0(x)y^{-1}$ for any $x\in M$. 
\item If $\psi$ is a real $^*$-isomorphism, then there exist central projections $p\in M$ $q\in N$, a $^*$-isomorphism $\psi_1\colon Mp\to Nq$, and a conjugate-linear $^*$-isomorphism $\psi_2\colon Mp^{\perp}\to Nq^{\perp}$ such that $\psi(x) = \psi_1(xp) + \psi_2(xp^{\perp})$ for any $x\in M$. 
\end{enumerate}
\end{lemma}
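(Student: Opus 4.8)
The plan is to establish the four parts in order, since each one reduces the structure of $\psi$ further, and the later parts are either classical or follow from the earlier ones by passing to direct summands.

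For part (1), I would first recover the additive structure from the multiplicative one. Since $M$ has no type I$_1$ summand, $M$ has order $2$ on each summand, so every element of $M$ can be written using a system of $2\times 2$ matrix units, and in particular projections are abundant. The key observation is that, for a semigroup isomorphism, one can detect orthogonality of projections purely multiplicatively ($p\perp q$ iff $pq=qp=0$), and more importantly one can characterize when $p+q$ equals a given projection $r$: namely $p,q\leq r$, $p\perp q$, and there is no projection strictly between. Using the $2\times2$ matrix picture and the fact that $\psi$ preserves idempotents (since $e^2=e$ is multiplicative) and their products, one shows $\psi$ sends orthogonal projections to orthogonal projections and preserves finite orthogonal sums of equivalent projections; a standard argument (essentially the one for proving multiplicative maps on matrix algebras over suitable rings are additive, going back to Martindale/Eilenberg) then upgrades $\psi$ to an additive map. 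The main obstacle here is the bookkeeping: making the additivity argument work uniformly across all the order-$n$ summands, and I would lean on the decomposition $M=\bigoplus M_n$ recorded in the preliminaries together with the matrix identification $M_n\cong \M_n(\hat M)$.

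For part (2), a ring isomorphism $\psi$ is automatically additive, so it restricts to a $\Q$-linear map, and the task is to show it is $\R$-linear. The standard approach is to show $\psi$ is continuous, or at least that it cannot mix the two copies of $\R$ badly; concretely, one shows that a ring isomorphism between von Neumann algebras with no finite dimensional ideals is automatically norm-continuous on self-adjoint parts, using that positivity is encoded ring-theoretically (an element is positive iff it is a square of a self-adjoint element iff ... ) and a closed-graph / automatic-continuity argument in the spirit of Kaplansky. The hypothesis of no finite dimensional ideal is exactly what rules out the pathological field automorphisms of $\C$ entering, because on a finite dimensional factor $\M_n(\C)$ one genuinely can twist by a wild automorphism of $\C$. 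Once continuity is in hand, $\psi$ is determined on $\R 1$ by order and hence is $\R$-linear. This is where I expect the real work to be, though much of it can be cited.

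Parts (3) and (4) are then essentially structure theory. For (3), a real algebra isomorphism $\psi\colon M\to N$ is in particular a bijective real-linear ring homomorphism; composing with $^*$ is not available directly, but one uses that $\psi$ maps the involution to some (possibly non-$^*$) anti-automorphism-like structure, and the polar-type decomposition: the point is that any algebra isomorphism between von Neumann algebras differs from a $^*$-isomorphism by an inner automorphism implemented by a positive (hence invertible, after adjusting by a central projection) element — this is the classical fact that algebra isomorphisms of von Neumann algebras are spatial up to similarity, and I would cite it, setting $y$ to be the relevant invertible element and $\psi_0=y^{-1}\psi(\cdot)y$. For (4), given a real $^*$-isomorphism $\psi\colon M\to N$, consider its complex-linear and conjugate-linear parts; $\psi$ restricted to the center $\Z(M)$ is a real $^*$-isomorphism onto $\Z(N)$, and the set where it is complex-linear versus conjugate-linear is governed by a central projection. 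Precisely, let $q\in N$ be the central projection onto the part where $\psi(i\cdot 1)=i\cdot\psi(1)$ and $q^\perp$ the part where $\psi(i\cdot 1)=-i\cdot\psi(1)$; pulling back gives a central projection $p\in M$ with $\psi(p)=q$, and then $\psi_1:=\psi|_{Mp}$ and $\psi_2:=\psi|_{Mp^\perp}$ are respectively complex-linear and conjugate-linear $^*$-isomorphisms, with $\psi(x)=\psi_1(xp)+\psi_2(xp^\perp)$ by real-linearity. I expect part (4) to be routine given the right central projection, and part (1) to be the most laborious, with part (2) the one that genuinely needs an automatic-continuity input.
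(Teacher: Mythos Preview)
Your approach is essentially the paper's: (1) via Martindale, (2) via Kaplansky, (3) via the classical structure theorem for algebra isomorphisms of von Neumann algebras, and (4) via the central projection determined by $\psi(i)$. The one structural difference worth noting is that the paper handles (3) and (4) simultaneously: it first computes $\psi(i)^2=-1$ with $\psi(i)$ central, so $\psi(i)=iq-iq^{\perp}$ for a central projection $q\in N$, and then splits $\psi$ accordingly. For (4) this gives the linear/conjugate-linear decomposition immediately, while for (3) it reduces the \emph{real} algebra isomorphism to a pair of \emph{complex} algebra isomorphisms (one onto $Nq$, one onto the complex conjugate $\overline{Nq^{\perp}}$), to which the cited result of Okayasu/Gardner/Sakai then applies. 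Your sketch of (3) invokes that classical result but leaves the real-to-complex reduction implicit (``after adjusting by a central projection'' is the right instinct but not the mechanism); the $\psi(i)$ trick is exactly what fills that gap, and you already have it in hand in your treatment of (4).
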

\begin{proof}
Each item is easily obtained by known results.

(1) We may take a projection $p\in \P(M)$ such that both of the central supports of $p$ and $1-p$ are equal to $1$. 
It is easy to see that the following hold:
(a) If $x\in M$ satisfies $xM=\{0\}$, then $x=0$; 
(b) If $x\in M$ satisfies $pMx=\{0\}$, then $x=0$;
(c) If $x\in M$ satisfies $pxpMp^{\perp}=\{0\}$, then $pxp=0$. 
Hence we may apply Martindale's theorem \cite[Theorem]{Ma} to obtain the desired conclusion.

The item (2) is a consequence of Kaplansky's result \cite[Theorem]{Ka}.

We prove (3) and (4) at the same time. 
Let $\psi\colon M\to N$ be a real algebra isomorphism.
We know that $\psi(i)^2 =\psi(i^2)=\psi(-1)= -1$ and that $\psi(i)$ is central in $N$. 
It follows that $\psi(i) = qi-q^{\perp}i$ for some central projection $q$ of $N$. 
Put $p:= \psi^{-1}(q)$, which is a central projection of $M$. 
If $\psi$ is a real $^*$-isomorphism, then $\psi$ restricted to $Mp$ is a $^*$-isomorphism from $Mp$ onto $Nq$, and $\psi$ restricted to $Mp^{\perp}$ is a conjugate-linear $^*$-isomorphism from $Mp^{\perp}$ onto $Nq^{\perp}$, hence the proof of (4) is complete. 
If $\psi$ is merely a real algebra isomorphism, then $\psi$ restricted to $Mp$ is an algebra isomorphism from $Mp$ onto $Nq$, and $\psi$ restricted to $Mp^{\perp}$ determines an algebra isomorphism from $Mp^{\perp}$ onto $\overline{Nq^{\perp}}$, where $\overline{Nq^{\perp}}$ means the complex conjugation of the von Neumann algebra $Nq^{\perp}$.  
See e.g.\ \cite[Section 2.3]{P} for the definition of complex conjugation of von Neumann algebras.
Lastly, we may use the result on the general form of algebra isomorphisms between von Neumann algebras (\cite[Theorem I]{O}, see also \cite{Ga} and \cite[Section 4.1]{Sa}) to obtain the desired conclusion. 
\end{proof}

\subsection{The algebra of locally measurable operators}\label{LSdef}
Let $M\subset B(H)$ be a von Neumann algebra. 
In this paper, the algebra $LS(M)$ of locally measurable operators with respect to $M$, which we briefly describe below, plays a crucial role.

A densely defined closed operator $x$ on $H$ is said to be \emph{affiliated with $M$} (and we write $x\eta M$) if 
$yx\subset xy$ for any $y\in M'$, where $M':=\{y\in B(H)\mid ay=ya\text{ for any }a\in M\}$ denotes the commutant of $M$. 
An operator $x\eta M$ is said to be \emph{measurable} with respect to $M$ if the spectral projection $\chi_{(c, \infty)}(\lvert x\rvert)\in \P(M)$ is a finite projection in $M$ for some real number $c>0$. 
An operator $x\eta M$ is said to be \emph{locally measurable} with respect to $M$ if there exists an increasing sequence $\{p_n\}_{n\geq 1}$ of central projections in $M$ such that $p_n\nearrow 1$ and $xp_n$ is measurable with respect to $M$ for any $n$.
We write $S(M)$ (resp.\ $LS(M)$) to mean the collection of all measurable (resp.\ locally measurable) operators with respect to $M$.
If $x, y\in S(M)$ (resp. $LS(M)$), then $x^*$ and the closures of $xy$, $x+y$ are in $S(M)$ (resp. $LS(M)$). 
Using this fact, we can consider $S(M)$ and $LS(M)$ as $^*$-algebras that contain $M$. 
In what follows, we abbreviate the symbol of the closure of an unbounded operator unless it is confusing.
We remark that $LS(M)=M$ holds if and only if $M$ is the direct sum of finite number of type I and III factors. 
We also remark that if $M$ is finite then $LS(M)=S(M)$ is the collection of all affiliated operators.
See \cite{Y} and \cite{Se} for more details of (locally) measurable operators. 

In \cite[Lemma 2.2]{Mor2}, the author obtained the following result. 

\begin{lemma}\label{inverse}
Let $M$ be a von Neumann algebra and $a\in M_+$. 
Then the following two conditions are equivalent: 
\begin{enumerate}
\item The element $a$ is invertible in the algebra $LS(M)$. 
\item For any $b\in M_+\setminus \{0\}$, there exists an $x\in M_+\setminus \{0\}$ such that $x\leq a$ and $x\leq b$. 
\end{enumerate}
\end{lemma}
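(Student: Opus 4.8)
The plan is to prove the two implications separately, using the order structure of $LS(M)$ and the description of invertibility in terms of spectral projections. For the direction (1) $\Rightarrow$ (2), suppose $a$ is invertible in $LS(M)$. Then there is a real number $\varepsilon>0$ and a central projection behaviour making the spectral projection $\chi_{[\varepsilon,\infty)}(a)$ ``large enough''; more precisely, invertibility of $a$ in $LS(M)$ means that for every nonzero central projection $z$ there is a nonzero central subprojection $z_0\le z$ and $\delta>0$ with $a z_0\ge \delta z_0$. Given $b\in M_+\setminus\{0\}$, I would pick a small spectral projection $q:=\chi_{[\eta,\infty)}(b)\ne 0$ of $b$, so that $b\ge \eta q$. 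Localizing $a$ on the central support of $q$ I get $\delta>0$ and a nonzero subprojection on which $a\ge\delta(\text{that projection})$. Intersecting appropriately (passing to $q'\le q$ supported under the relevant central projection, which is still nonzero since the central support of $q$ meets it), I can set $x:=\min(\delta,\eta)\,q'$ and check $x\le a$ and $x\le b$. The only subtlety is to make sure the central projection coming from invertibility of $a$ does not annihilate $q$; this is handled by choosing $q$ under the right central summand from the start, or by replacing $b$ by $bz$ for a suitable central projection $z$.

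For the converse (2) $\Rightarrow$ (1), I would argue by contraposition: assume $a$ is \emph{not} invertible in $LS(M)$ and produce a ``bad'' $b$. Non-invertibility of $a$ in $LS(M)$ means that there is a nonzero central projection $z$ such that $az$ is not bounded below by any positive scalar multiple of $z$ on any nonzero central subprojection of $z$; equivalently, for the decreasing family of spectral projections $e_n:=\chi_{[0,1/n]}(a)$, the net $e_n z$ does not converge to $0$ in the sense that there is a nonzero central subprojection $z_0\le z$ with $e_n z_0\ne 0$ for all $n$. The candidate for $b$ is something like $b:=\sum_n 2^{-n} f_n$, where the $f_n$ are suitably chosen nonzero subprojections of $e_n z_0$ (chosen mutually orthogonal, or nested, so the sum makes sense in $M_+$); the point is that $b$ is a nonzero positive element whose ``mass'' lives where $a$ is arbitrarily small, so that no nonzero $x\le a$ can satisfy $x\le b$. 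Concretely, if $0\le x\le a$ and $0\le x\le b$ with $x\ne 0$, then a spectral projection $\chi_{[\alpha,\infty)}(x)\ne 0$ for some $\alpha>0$; but $x\le a$ forces this projection under $\chi_{[\alpha,\infty)}(a)=e_n^{\perp}$ for $n$ large, while $x\le b$ forces it to overlap the region where $b<\alpha$ once $n$ is large, giving a contradiction.

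A cleaner route, which I would actually prefer for writing this up, is to reduce to the finite/scalar case centrally: write $M$ (localized) so that one may assume $a$ has nonzero spectral mass in every neighbourhood of $0$ on a fixed central summand, and then directly build $b$ from a sequence of spectral projections $q_n:=\chi_{(c_{n+1},c_n]}(a)$ with $c_n\downarrow 0$, setting $b:=\sum_n c_n q_n$ (an element of $M_+$ since the $q_n$ are orthogonal and $c_n$ is bounded), which satisfies $b\le a$ pointwise on its support but is ``too small near $0$'' to dominate any nonzero $x\le a$. Condition (2) applied with this $b$ yields $x\le a$, $x\le b$, $x\ne 0$; then $\chi_{[\alpha,\infty)}(x)\le \chi_{[\alpha,\infty)}(b)$ is a finite sum of the $q_n$ with $c_n\ge\alpha$, hence majorized by $\chi_{(c_m,\infty)}(a)$ for some $m$, which would make $a$ bounded below by $c_{m+1}>0$ on a nonzero piece — and iterating this over central summands recovers invertibility of $a$ in $LS(M)$ via Lemma \ref{inverse}'s own criterion phrased for $LS$.

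The main obstacle I anticipate is purely bookkeeping: keeping track of central projections so that all the projections produced are genuinely nonzero, and assembling the infinite sum defining $b$ inside $M_+$ (as opposed to $LS(M)_+$) with the right norm bound. The spectral-theoretic core — ``$a$ invertible in $LS(M)$'' $\iff$ ``$\chi_{[0,\varepsilon)}(a)\to 0$ centrally as $\varepsilon\to 0$'' — is standard, and once it is in place both implications are short. I would phrase everything in terms of the central-projection localization at the outset to avoid repeating the reduction twice.
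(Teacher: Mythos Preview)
First, note that the paper does not actually prove this lemma: it is quoted from the author's earlier paper \cite{Mor2} (Lemma 2.2 there), so there is no ``paper's own proof'' to compare against. That said, your proposal has two genuine gaps that would need to be repaired before it could stand as a proof.

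\medskip

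\textbf{Gap 1: the characterization of invertibility in $LS(M)$ is wrong in type II.}
You assert that $a\in M_+$ is invertible in $LS(M)$ if and only if for every nonzero central projection $z$ there exist a nonzero central $z_0\le z$ and $\delta>0$ with $az_0\ge\delta z_0$ (equivalently, ``$\chi_{[0,\varepsilon)}(a)\to 0$ centrally''). This is false. In a II$_1$ factor $M$ one has $LS(M)=S(M)=$ the algebra of \emph{all} closed densely defined operators affiliated with $M$, so $a\in M_+$ is invertible in $LS(M)$ precisely when $a$ is injective; but since $\Z(M)=\C$, your condition reduces to ``$a\ge\delta$ for some $\delta>0$'', i.e.\ invertibility in $M$ itself, which is strictly stronger. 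The same discrepancy occurs in type II$_\infty$ factors, where invertibility in $LS(M)=S(M)$ only requires that $\chi_{[0,\varepsilon)}(a)$ be a \emph{finite} projection for some $\varepsilon>0$, not that it vanish. Since both directions of your argument are organized around this characterization, neither goes through as written for type II algebras.

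\medskip

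\textbf{Gap 2: spectral projections are not monotone under the operator order.}
In your contrapositive argument for $(2)\Rightarrow(1)$ (and again in the ``cleaner route'') you use that $0\le x\le a$ forces $\chi_{[\alpha,\infty)}(x)\le\chi_{[\alpha,\infty)}(a)$. This is not true. For instance, in $\M_2(\C)$ take $a=\operatorname{diag}(2,\tfrac12)$ and let $x$ be the rank-one projection onto $(\cos\theta,\sin\theta)$ with $0<\sin^2\theta\le\tfrac13$; a direct computation shows $a-x\ge 0$, yet $\chi_{[1,\infty)}(x)=x$ is not dominated by $\chi_{[1,\infty)}(a)=e_{11}$. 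What \emph{is} true is the subequivalence $\chi_{(\alpha,\infty)}(x)\precsim\chi_{(\alpha,\infty)}(a)$, but that is not what your argument uses, and it does not yield the conclusion you draw about where the support of $x$ must live.

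\medskip

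Both issues are substantive: the first means your ``spectral-theoretic core'' does not hold in the generality of the lemma, and the second removes the mechanism by which your candidate $b$ is supposed to defeat every nonzero $x$. A correct proof needs a characterization of $LS$-invertibility that is sensitive to finite (not just zero) spectral projections, and an argument for $(2)\Rightarrow(1)$ that does not rely on monotonicity of spectral projections.
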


For $x\in LS(M)$, let $l(x)\in \P(M)$ denote the left support of $x$. 
That is, $l(x) :=\bigwedge \{p\in \P(M)\mid px = x\}$. 
Similarly, we write $r(x) :=\bigwedge \{p\in \P(M)\mid x = xp\}$.
Then $l(x)= \chi_{(0, \infty)}(\lvert x^*\rvert)$ and $r(x)= \chi_{(0, \infty)}(\lvert x\rvert)$ hold.
We remark that, for $x, y\in LS(M)$, we have $xy=0$ if and only if $r(x)l(y)=0$. 
Indeed, if $r(x)l(y)=0$, then $xy=xr(x)l(y)y=0$.
If $xy=0$, then we have $\lvert x\rvert \lvert y^*\rvert = 0$, 
which implies $\chi_{(\varepsilon, \infty)}(\lvert x\rvert)\chi_{(\varepsilon, \infty)}(\lvert y^*\rvert)=0$ for every $\varepsilon>0$. 
Take the limit $\varepsilon\to 0$ in the strong operator topology to obtain   $r(x)l(y)=0$.

\subsection{Halmos's two projection theorem}\label{halmos}
In order to play with projection lattices, it is useful to look at the relative position of a pair of projections. 
For it, we make use of Halmos's two projection theorem \cite{H} from the viewpoint of von Neumann algebra theory. 
Here we recapitulate the argument in \cite[Lemma 2.2]{Mor1}.

Let $M\subset B(H)$ be a von Neumann algebra and $p, q\in \P(M)$. 
Put 
\[
e_1= p- p\wedge q - p\wedge q^{\perp},\quad e_2=p^{\perp} - p^{\perp}\wedge q - p^{\perp}\wedge q^{\perp}, 
\]
and $x:= e_1(q-p\wedge q-p^{\perp}\wedge q)e_2$.
By an elementary calculation, we see that $l(x)=e_1$ and $r(x)=e_2$. 
By polar decomposition, we may take a partial isometry $v=v_{p, q}\in M$ such that $x= v\lvert x \rvert = \lvert x^* \rvert v$, $vv^*= e_1$ and $v^*v= e_2$. 

We can identify each $y\in (e_1+e_2)M(e_1+e_2)$ with 
$\begin{pmatrix}
e_1ye_1 & e_1yv^* \\
vye_1 & vyv^*
\end{pmatrix} \in \M_2(e_1Me_1)$. 
Then $q-p\wedge q-p^{\perp}\wedge q\, (\leq e_1+e_2)$ is identified with 
\[
\begin{split}
&\begin{pmatrix}
e_1(q-p\wedge q-p^{\perp}\wedge q)e_1 & e_1(q-p\wedge q-p^{\perp}\wedge q)v^* \\
v(q-p\wedge q-p^{\perp}\wedge q)e_1 & v(q-p\wedge q-p^{\perp}\wedge q)v^*
\end{pmatrix} \\
=& \begin{pmatrix}
e_1(q-p\wedge q-p^{\perp}\wedge q)e_1 & \lvert x^* \rvert \\
\lvert x^* \rvert & v(q-p\wedge q-p^{\perp}\wedge q)v^*
\end{pmatrix}
\in \M_2(e_1Me_1).
\end{split}
\] 
Put $a:= (e_1(q-p\wedge q-p^{\perp}\wedge q)e_1)^{1/2}$ and $b:= (v(q-p\wedge q-p^{\perp}\wedge q)v^*)^{1/2}$, which are positive injective operators in $M_{p, q}:= e_1Me_1$. 
Since $\begin{pmatrix}
a^2 & \lvert x^* \rvert \\
\lvert x^* \rvert & b^2
\end{pmatrix}$ is a projection, some calculations show that $a, b$ and $\lvert x^* \rvert$ commute, $a^2+b^2= e_1$ and $\lvert x^* \rvert = ab$. 
Thus $q-p\wedge q-p^{\perp}\wedge q$ corresponds to $\begin{pmatrix}
a^2 & ab \\
ba & b^2
\end{pmatrix}$.

Therefore, we may decompose $p$ and $q$ in the following manner:
\[
p= 1\oplus 0\oplus 1\oplus 0\oplus 
\begin{pmatrix}
1&0\\
0&0
\end{pmatrix},\quad 
q= 0\oplus 1\oplus 1\oplus 0\oplus 
\begin{pmatrix}
a^2&ab \\
ab&b^2
\end{pmatrix}, 
\]
where $H$ is decomposed as $H = (p\wedge q^{\perp}) H \oplus (p^{\perp} \wedge q) H \oplus (p\wedge q) H \oplus (p^{\perp} \wedge q^{\perp}) H \oplus (e_1+e_2)H$, 
and $a$ and $b$ are positive injective operators in $M_{p, q}\,\, (= e_1Me_1)$ such that $a^2+b^2=1_{M_{p, q}}$.

\subsection{Center-valued norm}\label{cvn}
Let $M$ be a von Neumann algebra of type I or III and $x\in LS(M)$. 
Then there exists a unique minimal element $\vertiii{x} \in LS(\Z(M))_+\,\,(\subset LS(M))$ with $\lvert x\rvert\leq \vertiii{x}$. 
The mapping $\vertiii{\cdot}\colon LS(M)\to LS(\Z(M))_+$ is called the \emph{center-valued norm}. 
Remark that if $M$ is a factor, then $\Z(M)$ can be identified with $\C$ and we have $\vertiii{x} = \lVert x \rVert\in \R$ for every $x\in M$. 
Be cautious of the fact that we cannot take such a mapping for a type II von Neumann algebra. 
That's why we will need to exclude type II cases in the proof of Theorem B.

As is expected, the center-valued norm possesses e.g.\ the following properties: 
For any $x, y\in LS(M)$ and $a\in LS(\Z(M))$, we have (i) $\vertiii{x}=0\Longrightarrow x=0$. (ii) $\vertiii{x+y} \leq \vertiii{x}+\vertiii{y}$. (iii) $\vertiii{a} = \lvert a\rvert$. (iv) $\vertiii{ax} = \lvert a\rvert\vertiii{x}$. (v) $\vertiii{xy}\leq \vertiii{x}\vertiii{y}$. 
See for example \cite[Section 2]{AAKD} and references therein for further information about the center-valued norm.

\section{Lattice isomorphisms of projection lattices}\label{main}
Part of this section heavily depends on von Neumann's argument in \cite[Part II, Chapter IV]{N}.
The aim of this section is to give a proof of 
\begin{thm}
Let $M$ and $N$ be two von Neumann algebras.  
Suppose that $M$ does not admit type I$_1$ nor I$_2$ direct summands, and that $\Phi\colon \P(M)\to \P(N)$ is a lattice isomorphism. 
Then there exists a unique ring isomorphism $\Psi\colon LS(M)\to LS(N)$ such that $\Phi(l(x)) = l(\Psi(x))$ for all $x\in LS(M)$. 
\end{thm}

Before its proof, we consider the converse of Theorem A.
\begin{proposition}\label{converse}
Let $M$ and $N$ be von Neumann algebras. 
Suppose that $\Psi\colon LS(M)\to LS(N)$ is a ring isomorphism. 
Then there exists a unique lattice isomorphism $\Phi\colon \P(M)\to \P(N)$ such that $\Phi(l(x)) = l(\Psi(x))$ for any $x\in LS(M)$. 
\end{proposition}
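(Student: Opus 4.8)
The plan is to recover the lattice isomorphism $\Phi$ directly from the ring isomorphism $\Psi$ by exploiting the observation already recorded in the excerpt: for $x, y \in LS(M)$ one has $xy = 0$ iff $r(x)l(y) = 0$, and in particular every projection $p \in \P(M)$ equals $l(p)$. So the only candidate for $\Phi$ is $\Phi(p) := l(\Psi(p))$; uniqueness is then immediate, since any lattice isomorphism $\Phi'$ satisfying $\Phi'(l(x)) = l(\Psi(x))$ for all $x$ must in particular agree with this formula on projections (taking $x = p$). The work is to show that $\Phi$ so defined is a well-defined lattice isomorphism $\P(M) \to \P(N)$ and that the identity $\Phi(l(x)) = l(\Psi(x))$ persists for \emph{all} $x \in LS(M)$, not just projections.

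First I would check that $\Phi$ maps $\P(M)$ into $\P(N)$. A projection $p$ is characterized among elements of $LS(M)$ by $p = p^2$ together with $p = l(p)$; since $\Psi$ is multiplicative, $\Psi(p)$ is an idempotent in $LS(N)$, and $l(\Psi(p))$ is its left support projection, which lies in $\P(N)$. Next, order-preservation: for $p, q \in \P(M)$ I would use the characterization $p \leq q \iff pq^{\perp} = 0 \iff p(1-q) = 0$. Since $\Psi$ is a unital ring isomorphism (it sends $1$ to $1$ because $1$ is the unique idempotent $e$ with $ex = x$ for all $x$), $\Psi(1 - q) = 1 - \Psi(q)$, and $\Psi(p)\Psi(1-q) = \Psi(p(1-q))$. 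Combining this with the support criterion $ab = 0 \iff r(a)l(b) = 0$ and the fact that $r$ and $l$ of an idempotent are comparable to the idempotent in the appropriate sense, one gets $p \leq q \iff l(\Psi(p)) \leq \text{(something involving } \Psi(q))$; the cleanest route is probably to first reduce to the case where $\Psi(p)$ and $\Psi(q)$ are genuine projections. That is the real structural point: although $\Psi$ need not send projections to projections, for each idempotent $e = \Psi(p)$ there is a canonical projection, namely $l(e)$, and I would verify that $p \mapsto \Psi(p) \mapsto l(\Psi(p))$ respects the lattice operations by showing $l(\Psi(p \wedge q)) = l(\Psi(p)) \wedge l(\Psi(q))$ and dually for $\vee$ (equivalently, handle one of them and use that $\Psi^{-1}$ gives the inverse map, yielding a bijection that preserves order both ways, hence a lattice isomorphism).

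To push the identity $\Phi(l(x)) = l(\Psi(x))$ from projections to all $x \in LS(M)$, I would argue that $l(x)$ is the smallest projection $p$ with $px = x$, i.e. $(1-p)x = 0$, i.e. $r(1-p)l(x) = 0$, i.e. $(1-p)l(x) = 0$ since $1-p$ is a projection; applying $\Psi$ and using multiplicativity plus $\Psi(1-p) = 1 - \Psi(p)$ together with the support criterion in $LS(N)$, one sees that $p$ is the smallest projection with $px = x$ iff $\Phi(p) = l(\Psi(p))$ is the smallest projection with $\Phi(p)\Psi(x) = \Psi(x)$, which says exactly $\Phi(l(x)) = l(\Psi(x))$. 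The only subtlety here is that "smallest" is taken in $\P(M)$ vs.\ $\P(N)$ and one needs $\Phi$ to be an order isomorphism to transport infima — but that was established in the previous step, so this is bookkeeping.

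I expect the main obstacle to be the lattice-operation compatibility, i.e.\ proving $l(\Psi(p \wedge q)) = l(\Psi(p)) \wedge l(\Psi(q))$ (and the join version). The complication is that $\Psi$ is only a ring isomorphism, so it does not obviously interact with the order or with $\ast$, and $\wedge$ on projections is not an algebraic operation expressible purely via addition and multiplication in a single algebra. The workaround is to express $p \wedge q$ algebraically in a $\Psi$-invariant way: e.g.\ $p \wedge q = \lim_n (pq)^n$ in a suitable topology, but since $\Psi$ need not be continuous, a cleaner option is to use that $p \wedge q$ is the unique projection $e \leq p, q$ such that any projection $f \leq p, q$ satisfies $f \leq e$ — a purely order-theoretic description — and then invoke order-preservation, which circularly requires order-preservation first. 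The honest fix is to prove order-preservation of $\Phi$ \emph{directly} from the divisibility-style criterion $p \leq q \iff p(1-q) = 0$ (which survives under $\Psi$ as shown above, once one checks $l(e) \leq l(f)$ is detected by $e(1-l(f)) $-type conditions for idempotents $e,f$), establish that $\Phi$ and the analogously defined $\Phi' := $ the map induced by $\Psi^{-1}$ are mutually inverse and both order-preserving, and conclude $\Phi$ is a lattice isomorphism automatically (an order isomorphism between lattices is a lattice isomorphism). That reduces everything to the single clean lemma: \emph{for idempotents $e, f \in LS(N)$, $l(e) \leq l(f)$ iff $(1 - l(f))e = 0$ iff $e - fe \cdot(\text{correction})$ vanishes}, which I would extract from the support calculus in Section \ref{LSdef}.
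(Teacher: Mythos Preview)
Your overall plan---define $\Phi(p) = l(\Psi(p))$, show it is an order isomorphism, then extend the identity to all $x$---is sound and is the same strategy the paper uses. However, you are working much harder than necessary, and there is a genuine slip in your order-preservation step.

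The paper dispatches the entire proposition in three lines using the left-annihilator characterization: for \emph{any} $x, y \in LS(M)$,
\[
l(x) \leq l(y) \iff \{z \in LS(M) \mid zy = 0\} \subset \{z \in LS(M) \mid zx = 0\},
\]
which is immediate from $zx = 0 \iff r(z)l(x) = 0$. Since the right-hand side is purely ring-theoretic, it transports under $\Psi$, giving $l(x) \leq l(y) \iff l(\Psi(x)) \leq l(\Psi(y))$ for all $x, y$ at once. This simultaneously yields that $\Phi$ is an order isomorphism (hence a lattice isomorphism) and, since $l(l(x)) = l(x)$, that $\Phi(l(x)) = l(\Psi(l(x))) = l(\Psi(x))$ for every $x$. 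No separate ``extension from projections to general $x$'' step is needed.

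Your route via $p(1-q) = 0$ has the product on the wrong side, and this is not merely cosmetic. From $p \leq q$ you get $\Psi(p)(1-\Psi(q)) = 0$, i.e.\ $\Psi(p) = \Psi(p)\Psi(q)$; but for idempotents $e, f$ the condition $e = ef$ does \emph{not} imply $l(e) \leq l(f)$. (In $\M_2(\C)$ take $e = \bigl(\begin{smallmatrix} 1 & 0 \\ 0 & 0 \end{smallmatrix}\bigr)$ and $f = \bigl(\begin{smallmatrix} 1 & 0 \\ 1 & 0 \end{smallmatrix}\bigr)$: then $ef = e$ but $l(e)$ and $l(f)$ are distinct rank-one projections.) The correct choice is $(1-q)p = 0$: then $\Psi(p) = \Psi(q)\Psi(p)$, whence $l(\Psi(p)) = l(\Psi(q)\Psi(p)) \leq l(\Psi(q))$ directly. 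Your proposed fallback lemma ``$l(e) \leq l(f)$ is detected by $e(1 - l(f))$-type conditions'' repeats the same side error and, worse, reintroduces $l(f)$, which is not expressible ring-theoretically and so cannot be transported by $\Psi$. Once the side is corrected your outline can be completed, but the annihilator formulation is both shorter and handles projections and general elements uniformly.
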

\begin{proof}
It is easy to see that $\Psi(0)=0$.
Let $x, y\in LS(M)$ satisfy $l(x)\leq l(y)$. 
Then we have $\{z\in LS(M)\mid zx\neq 0\}\subset \{z\in LS(M)\mid zy \neq 0\}$ and hence $\{z\in LS(N)\mid z\Psi(x)\neq 0\}\subset \{z\in LS(N)\mid z\Psi(y) \neq 0\}$, which in turn leads to $l(\Psi(x))\leq l(\Psi(y))$. 
We obtain $l(x)\leq l(y)\Longleftrightarrow l(\Psi(x))\leq l(\Psi(y))$ for any $x, y\in LS(M)$. 
Therefore, the mapping $\Phi\colon \P(M)\to \P(N)$ defined by $\Phi(p) = l(\Psi(p))$, $p\in \P(M)$, satisfies the desired condition. 
\end{proof}

\begin{remark}
The same proof is valid even if we replace a ring isomorphism with a semigroup isomorphism. 
However, Martindale's result \cite{Ma} implies that a semigroup isomorphism $\Psi\colon LS(M)\to LS(N)$ is automatically a ring isomorphism if $M$ is without type I$_1$ direct summands.
\end{remark}

We begin the proof of Theorem A. 
Let us first check the uniqueness of $\Psi$. 

\begin{lemma}
Let $M$ be a von Neumann algebra without type I$_1$ direct summands. 
For any $x\in M$, there exists a subset $F\subset M$ with $\# F\leq 9$, $\sum_{y\in F} y = x$, and the following property: 
For any $y\in F$, there exists a pair $p, q\in \P(M)$ of mutually orthogonal projections such that $p\sim q$ and either $pyp=y$ or $pyq=y$. 
\end{lemma}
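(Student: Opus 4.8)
The plan is to exploit the matrix structure provided by the hypothesis that $M$ has no type I$_1$ direct summand, which (as recalled in the Preliminaries) forces $M$ to have order at least $3$ — here we only need order $2$ after a suitable central decomposition. First I would reduce to the case where $M$ has order $2$: decompose $M$ as a direct sum $M_1\oplus M_2'$ where $M_1$ has order $2$ on a central projection whose central support, together with its complement, I can arrange... more carefully, I would invoke the standard fact that since $M$ has no type I$_1$ summand there is a projection $e\in\P(M)$ with $e\sim e^\perp$ is too strong; instead one uses that $M$ has order $2$ only up to cutting by a central projection. To keep things clean I would instead directly use that $M\cong\M_2(\hat M)$ after passing to order $2$ on each central piece, writing the general element $x$ in $2\times 2$ block form $x=\sum_{i,j} e_i x e_j$ with $e_1\sim e_2$, $e_1+e_2$ a central projection; off the central support we iterate. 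This already gives at most $4$ matrix-unit-type pieces per central block, and the bound $9=3^2$ comes from doing this with an order-$3$ decomposition globally, which is available by the quoted \cite[Lemma 6.5.6]{KR} and does not require cutting by central projections.

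So the cleaner route: use that $M$ (having no type I$_1$ summand — actually we need no finite type I$_1$ summand, but the statement only excludes I$_1$; one must be slightly careful, but order $3$ is available once type I$_1$ is excluded via the stated decomposition into $M_n$, $n\ge 2$, and on the order-$2$ pieces a cruder count suffices, still $\le 9$) decomposes so that $M\cong\M_3(\hat M)$ or a direct sum of such with order-$2$ blocks. Write $\{e_{ij}\}$ for the matrix units. Then for $x\in M$ I would set $F=\{e_{ii}xe_{jj} : 1\le i,j\le 3\}$ (dropping zero terms), so $\#F\le 9$ and $\sum_{y\in F}y=x$. For a diagonal term $y=e_{ii}xe_{ii}$, take $p=e_{ii}$ and $q=e_{jj}$ for any $j\ne i$: then $p\perp q$, $p\sim q$ (matrix units), and $pyp=y$. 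For an off-diagonal term $y=e_{ii}xe_{jj}$ with $i\ne j$, take $p=e_{ii}$, $q=e_{jj}$: again $p\perp q$, $p\sim q$, and $pyq=y$. On the order-$2$ central blocks, the same with indices in $\{1,2\}$ gives $\le 4\le 9$ pieces. Assembling the blocks, the total is still $\le 9$ because the block supports are central and orthogonal, so the local decompositions do not interact — one takes the union of the $F$'s, but the count is per-summand, hence $\le 9$ overall.

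The main obstacle, and the point requiring genuine care, is the interface between ``no type I$_1$ direct summand'' and the availability of a \emph{global} order-$3$ (or at least order-$2$) matrix decomposition. Type I$_n$ summands with $n$ finite are allowed by the hypothesis (only I$_1$ and — in Theorem A, also I$_2$ — are excluded), and a type I$_n$ factor has order exactly $n$, not an arbitrary order; so I cannot claim order $3$ globally. The fix is exactly the decomposition stated in the Preliminaries: $M$ without type I$_1$ summands splits as $M = (\bigoplus_{n\ge 2} M_n)\oplus M_\infty$ where $M_n$ has order $n$ (for $2\le n<\infty$) and $M_\infty$ (the part with no finite type I summand) has order $3$ — actually has order $m$ for all $m$. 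On each $M_n$ with $n\ge 3$ and on $M_\infty$ use the order-$3$ construction above ($\le 9$ pieces); on $M_2$ use the order-$2$ construction ($\le 4$ pieces). Since these act on orthogonal central projections, for a fixed $x\in M$ its pieces from different summands can be grouped: a diagonal piece from $M_2$ and a diagonal piece from $M_3$ can be \emph{added} (their supports are orthogonal central projections, and one can find orthogonal equivalent $p,q$ dominating the combined support within the combined block), keeping the running total at $\le 9$. I would spell out this grouping argument carefully, as it is the only non-formal step; everything else is the routine matrix-unit bookkeeping sketched above.
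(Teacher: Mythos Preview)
Your argument has a genuine gap: you assume that each summand $M_n$ with $n\ge 3$ admits an order-$3$ matrix decomposition $M_n\cong\M_3(\hat M)$, but this is false in general. The cited \cite[Lemma 6.5.6]{KR} only guarantees order $m$ for all $m$ on the part of $M$ with \emph{no finite type I} summands; a type I$_n$ summand with $n\ge 3$ has order $n$ but need not have order $3$ (e.g.\ a type I$_5$ factor cannot be written as $\M_3(\hat M)$, since $3\nmid 5$). So on such summands your $3\times 3$ matrix-unit decomposition is simply unavailable, and using the $n\times n$ decomposition instead gives $n^2$ pieces, not $\le 9$. Your cross-summand grouping does nothing to fix this, since the problem occurs already within a single central block.

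The paper's proof supplies exactly the missing idea. Working on a summand of fixed order $n\ge 2$, identify it with $\M_n(\hat M)$ and choose integers $0=n_0\le n_1\le n_2\le n_3=n$ with each gap $n_k-n_{k-1}\le n/2$. This partitions the $n$ indices into three (possibly empty) groups, and decomposing $x$ into the corresponding $3\times 3$ block matrix gives at most nine pieces $x^{kl}$. The point of the bound $n_k-n_{k-1}\le n/2$ is that each block-projection $P_k$ (the sum of the diagonal matrix units in group $k$) satisfies $P_k\precsim P_k^{\perp}$. Hence for a diagonal block $x^{kk}$ one takes $p=P_k$ and finds $q\le P_k^{\perp}$ with $q\sim p$; for an off-diagonal block $x^{kl}$ one takes $p$ to be the larger of $P_k,P_l$ and enlarges the smaller one inside $p^{\perp}$ to get $q\sim p$. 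This works uniformly for every $n\ge 2$ and gives the bound $9$ without ever needing order $3$.
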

\begin{proof}
It suffices to consider the case where $M$ has fixed order $2\leq n<\infty$. 
Then we may identify $M$ with $\M_n(\hat{M})$ for some von Neumann algebra $\hat{M}$. 
We may write $x\in M$ as $x = (x_{ij})_{1\leq i, j\leq n} \in \M_n(\hat{M})$. 
It is easy to see that we can take integers $n_0:=0\leq n_1\leq n_2\leq n=:n_3$ such that $n_1, n_2-n_1, n_3-n_2\leq n/2$. 
For $1\leq k, l\leq 3$, define $x^{kl} = (x^{kl}_{ij})_{1\leq i, j\leq n}\in \M_n(\hat{M})$ by 
$x^{kl}_{ij} = x_{ij}$ if $n_{k-1}+1\leq i\leq n_k$ and $n_{l-1}+1\leq j\leq n_l$, and $x^{kl}_{ij} = 0$ otherwise. 
(Here, we are decomposing $x$ into $3\times 3$ blocks.)
Then the nine operators $x^{kl}$, $1\leq k, l\leq 3$, (some of which may be $0$) satisfy the desired condition. 
\end{proof}

\begin{lemma}\label{id}
Let $M$ be a von Neumann algebra without type I$_1$ direct summands. 
Suppose that $\Psi\colon LS(M)\to LS(M)$ is a ring isomorphism with $l(\Psi(x))=l(x)$ for all $x\in LS(M)$. 
Then $\Psi$ is the identity mapping on $LS(M)$. 
\end{lemma}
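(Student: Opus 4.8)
\emph{Overview of the strategy.}
The plan is to exploit the hypothesis that $\Psi$ fixes every left support in three escalating stages: on projections, then on off‑diagonal partial isometries (and on $i$), and finally on arbitrary elements via a matrix‑unit reduction. In the process I would first note that $\Psi$ automatically preserves right supports too: using that $xy=0\iff r(x)l(y)=0$ and that $\Psi$ is a multiplicative bijection with $l(\Psi(\,\cdot\,))=l(\,\cdot\,)$, one gets $r(x)q=0\iff r(\Psi(x))q=0$ for every $q\in\P(M)$, and taking $q=1-r(x)$ and $q=1-r(\Psi(x))$ gives $r(\Psi(x))=r(x)$. Next, for $p\in\P(M)$ the element $e:=\Psi(p)$ is idempotent with $l(e)=r(e)=p$; I claim $e=p$. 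From $r(e)=p$ one gets $\ker e=(1-p)H$ and from $l(e)=p$ one gets $\overline{\operatorname{ran}e}=pH$, while $e\cdot e=e$ in $LS(M)$ says precisely that $e$ is the closure of the composition $e\circ e$, so $\operatorname{dom}(e\circ e)$ is a core for $e$. For $\xi$ in that core $e(e\xi)=e\xi$, hence $e\xi-\xi\in\ker e=(1-p)H$, so $p(e\xi)=p\xi$; since also $e\xi\in\overline{\operatorname{ran}e}=pH$, this forces $e\xi=p\xi$, and passing to closures $e=p$. (Neither of these steps uses the type hypothesis.) Finally, as $\Psi$ fixes central projections, it respects the decomposition of $M$ into direct summands of finite order, so I may assume $M$ has order $n\ge 2$ and fix a system $e_{ij}\in M$ of matrix units arising from $n$ orthogonal equivalent projections summing to $1$, so that $M\cong\M_n(\hat M)$ with $\hat M=e_{11}Me_{11}$ and $LS(M)\cong\M_n(R')$, $R':=e_{11}LS(M)e_{11}$.

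\emph{Off-diagonal partial isometries and $\Psi(i)=i$.}
The key observation is that if $v\in M$ is a partial isometry with $vv^*\perp v^*v$, then $\tfrac12(vv^*+v^*v+v+v^*)$ is a projection (a direct computation, using $v^2=(v^*)^2=0$). Since $\Psi$ fixes this projection and fixes $\mathbb{Q}$, comparing images gives $\Psi(v)+\Psi(v^*)=v+v^*$. Applying the same identity to the partial isometry $iv$ and using $\Psi(i)^2=-1$, a short linear computation yields $\Psi(v)=ve+v^*(1-e)$, where $e\in\P(\Z(M))$ is the central projection determined by $\Psi(i)=i(2e-1)$. But $\Psi(v)$ lies in the corner $(vv^*)LS(M)(v^*v)$, while $v^*(1-e)$ lies in the orthogonal corner $(v^*v)LS(M)(vv^*)$, and these corners meet only in $0$; hence $v^*(1-e)=0$ for every such $v$. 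Choosing $v$ supported in the summand $M(1-e)$ — which, being a direct summand of $M$, has no type I$_1$ summand and so (if nonzero) contains two orthogonal equivalent nonzero projections, i.e.\ such a $v\ne 0$ — would force $v^*=0$, a contradiction; therefore $1-e=0$. Thus $\Psi(i)=i$ and $\Psi(v)=v$ for every off-diagonal partial isometry $v$; in particular $\Psi(e_{ij})=e_{ij}$ for all $i,j$.

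\emph{Matrix-unit reduction.}
Since $\Psi$ fixes all $e_{ij}$, it acts entrywise: writing $x=\sum_{i,j}e_{i1}x_{ij}e_{1j}$ with $x_{ij}\in R'$, we get $\Psi(x)=\sum_{i,j}e_{i1}\theta(x_{ij})e_{1j}$, where $\theta:=\Psi|_{R'}$ is a ring automorphism of $R'$ with $\theta(i)=i$, and it remains to prove $\theta=\operatorname{id}_{R'}$. If $a\in R'$ is a positive contraction with $a\le e_{11}$, then $\begin{pmatrix}a&(a-a^2)^{1/2}\\(a-a^2)^{1/2}&e_{11}-a\end{pmatrix}\in\M_2(R')\subset\M_n(R')=LS(M)$ is a (necessarily bounded, hence genuine) projection, so $\Psi$ fixes it; comparing the $(1,1)$-entries of $\Psi(P)=P$ gives $\theta(a)=a$. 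For arbitrary $a\in R'_+$ one applies this to the positive contraction $e_{11}-(e_{11}+a)^{-1}$ and uses that $\theta$ preserves inverses to obtain $(e_{11}+\theta(a))^{-1}=(e_{11}+a)^{-1}$, hence $\theta(a)=a$. Then $\theta$ fixes every self-adjoint element of $R'$, and since $\theta(i)=i$ and $x=\tfrac12(x+x^*)+i\cdot\tfrac1{2i}(x-x^*)$ it fixes all of $R'$. Reassembling the order-$n$ summands (which $\Psi$ preserves), we conclude $\Psi=\operatorname{id}$ on $LS(M)$.

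\emph{Main obstacle.}
The genuinely delicate point is fixing projections in the first stage: one must allow for possibly unbounded idempotents in $LS(M)$, and it is precisely the identity $e\cdot e=e$ (= "$e$ is the closure of $e\circ e$") that makes $\operatorname{dom}(e\circ e)$ a core and lets the argument close. The conceptual heart is the second stage: recognizing that a partial isometry together with its adjoint — and, in the third stage, a positive contraction together with its defect — can be assembled into a projection, so that the left-support hypothesis becomes a rigidity statement; and the device of feeding $iv$ into the same identity to simultaneously pin down $\Psi(i)$. The remaining ingredients (automorphisms fixing matrix units act entrywise, the resolvent rescaling of positive elements) are routine.
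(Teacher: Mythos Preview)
Your argument is correct. The projection step, while heavier than necessary, does close: once you observe that $e\circ e$ and $e|_{\operatorname{dom}(e\circ e)}$ coincide as operators (both send $\xi$ to $e\xi$ on that domain), the identity $\overline{e\circ e}=e$ really does make $\operatorname{dom}(e\circ e)$ a core for $e$, and the rest follows. The determination of $\Psi(i)$ via the pair $v,\,iv$ is a nice device, and the corner argument forcing $v^*(1-e)=0$ is clean.

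That said, your route differs from the paper's in two respects worth noting. For projections the paper argues purely algebraically in two lines: from $\Psi(p)\Psi(p^\perp)=0$ one gets $r(\Psi(p))\le p$, and from $\Psi(p)^2=\Psi(p)$ one gets $(p-\Psi(p))\Psi(p)=0$, hence $(p-\Psi(p))p=0$; combining, $\Psi(p)=p$. No unbounded-operator bookkeeping is needed. For the main step the paper does \emph{not} pass through $\Psi(i)$ or matrix units at all: given $x=pxq$ with $p\perp q$, $p\sim q$ and $\lVert x\rVert\le 1/2$, it writes down a single projection $e\le p+q$ with $peq=x$ (via $|x^*|=\sin a\cos a$), so $\Psi(x)=\Psi(p)\Psi(e)\Psi(q)=peq=x$; the diagonal case $pxp$ is then reduced to the off-diagonal one by $x=(xv)v^*$, and an arbitrary $x\in M$ is split into at most nine such pieces. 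Extension to $LS(M)$ is a one-line resolvent trick with polar decomposition. Your approach trades this directness for a more modular structure (matrix-unit reduction plus the $2\times 2$ positive-contraction projection), and has the pleasant feature that $M$ and $LS(M)$ are handled uniformly without a separate extension step; the paper's approach is shorter and avoids the detour through $\Psi(i)$ altogether.
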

\begin{proof}
Let $p\in \P(M)$. 
We prove $\Psi(p)=p$. 
Since $pp^{\perp} =0$, we have $\Psi(p)\Psi(p^{\perp})=0$, which implies $0=r(\Psi(p))l(\Psi(p^{\perp}))=r(\Psi(p))p^{\perp}$. 
We obtain $r(\Psi(p))\leq p$.
We also have the equation $\Psi(p)^2=\Psi(p^2)=\Psi(p)$. 
Hence we obtain $(p-\Psi(p))\Psi(p)=0$, which implies $0=(p-\Psi(p))l(\Psi(p))=(p-\Psi(p))p$ and $p-\Psi(p)=0$.

In what follows, let $p, q\in \P(M)$ be mutually orthogonal mutually Murray--von Neumann equivalent projections. 
We next prove that $\Psi(x)=x$ if $x\in M\,\,(\subset LS(M))$ satisfies $pxq=x$.
By additivity, we may assume $\lVert x\rVert\leq 1/2$. 
Then there exists a projection $e\in \P(M)$ such that $e\leq p+q$, $peq=x$. 
Indeed, let $x=v\lvert x\rvert=\lvert x^*\rvert v$ be the polar decomposition. 
Take an operator $a\in(pMp)_+$ such that $\lVert a\rVert\leq \pi/4$ and $\lvert x^*\rvert=\sin a\cos a=(\sin 2a)/2$. 
Then 
\[
e:= \cos^2 a + v^*(\sin a\cos a) + (\sin a\cos a)v + v^*(\sin^2 a)v
\]
satisfies this property.
We obtain $\Psi(x)=\Psi(peq)=\Psi(p)\Psi(e)\Psi(q)=peq=x$. 

Suppose that $x\in M$ satisfies $pxp=x$. 
Take a partial isometry $v\in M$ such that $vv^*=p$ and $v^*v=q$. 
Then we have $p(xv)q = xv$ and $qv^*p=v^*$. 
Hence $\Psi(x)=\Psi(xvv^*)=\Psi(xv)\Psi(v^*)= xvv^* = x$. 

By the additivity of $\Psi$ and the preceding lemma, we see that $\Psi$ fixes every element in $M$. 
Let $x\in LS(M)$ and let $x=v\lvert x\rvert$ be its polar decomposition. 
It is clear that $\Psi(1)=1$. 
Since $v, (\lvert x\rvert + 1)^{-1}\in M$, we obtain 
\[
\begin{split}
\Psi(x) = \Psi(v\lvert x\rvert) &= \Psi(v)\Psi(\lvert x\rvert)\\
&=v(\Psi(\lvert x\rvert + 1)-1)= v(\Psi((\lvert x\rvert + 1)^{-1})^{-1} -1)\\
&= v((\lvert x\rvert + 1)-1) = v\lvert x\rvert=x 
\end{split}
\] 
\end{proof}

Hence we obtain the uniqueness of $\Psi$ in Theorem A.
Indeed, if two ring isomorphisms $\Psi, \Psi'\colon LS(M)\to LS(N)$ satisfies $l(\Psi(x))=l(\Psi'(x))$ for all $x\in LS(M)$, then we have $l(\Psi^{-1}\circ\Psi'(x))=l(x)$ for all $x\in LS(M)$, hence the preceding proposition implies $\Psi^{-1}\circ\Psi'(x)=x$ for all $x\in LS(M)$.
\medskip  

We introduce a binary relation on $\P(M)$, which is a key to the proof of Theorem A.
Let $p, q\in \P(M)$ be two projections with $p\wedge q = 0$. 
By Subsection \ref{halmos}, we decompose $p$ and $q$: 
\begin{equation}\label{eq:halmos}
p= 1\oplus 0\oplus 0\oplus 
\begin{pmatrix}
1&0\\
0&0
\end{pmatrix},\quad 
q= 0\oplus 1\oplus 0\oplus 
\begin{pmatrix}
a^2&ab \\
ab&b^2
\end{pmatrix}.    
\end{equation}
We say that $p$ is \emph{LS-orthogonal} to $q$ if the operator $b\in M_{p, q}$ is  invertible in $LS(M_{p, q})$. 
%We remark that this is equivalent to the condition $q$ is LS-orthogonal to $p$.

\begin{lemma}\label{orthogonalize}
Let $M$ be a von Neumann algebra and $p, q\in \P(M)$. 
Suppose that $p$ is LS-orthogonal to $q$. 
Then there exists an invertible element $S= S_{p, q}\in LS(M)$ such that $S(p\vee q)^{\perp}=(p\vee q)^{\perp}S= (p\vee q)^{\perp}$, $Sp= p$ and $l(SqS^{-1})=p\vee q - p$. 
\end{lemma}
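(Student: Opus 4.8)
The plan is to work entirely inside the corner $M_{p,q} = e_1 M e_1$ identified with $2\times 2$ matrices over $M_{p,q}$, together with the "trivial" summands, and to write down $S$ explicitly as a block operator. Recall from \eqref{eq:halmos} that on the summand $(p\vee q - p\wedge q - \cdots)$-part, which after subtracting $p\wedge q$ reduces to the $(e_1+e_2)$-part, we have
\[
p = \begin{pmatrix} 1 & 0 \\ 0 & 0 \end{pmatrix}, \qquad
q = \begin{pmatrix} a^2 & ab \\ ab & b^2 \end{pmatrix},
\]
with $a, b$ commuting positive injective operators in $M_{p,q}$ satisfying $a^2 + b^2 = 1$, and the hypothesis of LS-orthogonality says precisely that $b$ is invertible in $LS(M_{p,q})$. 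On the remaining three summands $p\wedge q^\perp$, $p^\perp\wedge q$, $p^\perp\wedge q^\perp$ (the "$1\oplus 0\oplus 0$" and "$0\oplus 1\oplus 0$" blocks), $p$ and $q$ are already in general position, and I will simply let $S$ act as the identity there; note $(p\vee q)^\perp$ is supported on the $p^\perp\wedge q^\perp$ summand, so the condition $S(p\vee q)^\perp = (p\vee q)^\perp S = (p\vee q)^\perp$ is then automatic, as is $Sp = p$ on those summands.

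So everything happens on the $2\times 2$ block. There I propose to take
\[
S = \begin{pmatrix} 1 & 0 \\ -ab^{-1} & b^{-1} \end{pmatrix},
\]
which makes sense in $\M_2(LS(M_{p,q}))$ since $b^{-1}\in LS(M_{p,q})$ and $a, b^{-1}$ commute; it is invertible with inverse $\begin{pmatrix} 1 & 0 \\ a & b \end{pmatrix}$, so $S$ is invertible in $\M_2(LS(M_{p,q})) \subset LS(M)$. One checks $Sp = \begin{pmatrix} 1 & 0 \\ -ab^{-1} & b^{-1}\end{pmatrix}\begin{pmatrix} 1 & 0 \\ 0 & 0\end{pmatrix} = \begin{pmatrix} 1 & 0 \\ -ab^{-1} & 0 \end{pmatrix}$, which is \emph{not} equal to $p$ — so I will instead need a column-respecting choice; the correct fix is to conjugate so that $q$ is straightened while $p$'s range is preserved. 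Concretely, compute $SqS^{-1}$ for the above $S$: since $q = \begin{pmatrix} a \\ b \end{pmatrix}\begin{pmatrix} a & b\end{pmatrix}$ is the rank-one-type projection onto the "column" $\begin{pmatrix} a \\ b\end{pmatrix}$, and $S\begin{pmatrix} a \\ b \end{pmatrix} = \begin{pmatrix} a \\ -ab^{-1}a + 1\end{pmatrix} = \begin{pmatrix} a \\ b^{-1}(b^2 - a^2\cdot\text{?})\end{pmatrix}$; carrying this out using $a^2 + b^2 = 1$ gives $S\begin{pmatrix} a \\ b\end{pmatrix} = \begin{pmatrix} a \\ b^{-1}(1 - 2a^2)\end{pmatrix}$ in general, so I will instead choose $S$ of the form $\begin{pmatrix} * & * \\ * & * \end{pmatrix}$ engineered so that $S\begin{pmatrix} a \\ b\end{pmatrix}$ lies in the first coordinate and $S$ fixes $\begin{pmatrix} 1 \\ 0\end{pmatrix}$-range. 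The honest choice is $S = \begin{pmatrix} 1 & 0 \\ a b^{-1} & b^{-1}\end{pmatrix}$: then $Sp$ has range $= $ range of $\begin{pmatrix} 1 \\ ab^{-1}\end{pmatrix}$, again not $p$. So the genuinely correct move is to first left-multiply $q$ by something fixing $p$.

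Here is the clean version of the step, which is the one I would actually write: take
\[
S = \begin{pmatrix} 1 & ab^{-1} \\ 0 & b^{-1} \end{pmatrix},
\]
acting as the identity on the other summands. Then $S$ is invertible in $\M_2(LS(M_{p,q}))$, with inverse $\begin{pmatrix} 1 & -a \\ 0 & b\end{pmatrix}$; one has $Sp = \begin{pmatrix} 1 & 0 \\ 0 & 0\end{pmatrix} = p$ since $p$ kills the second column; and on the nontrivial block $S$ maps the range of $q$, spanned by the column $\begin{pmatrix} a \\ b\end{pmatrix}$, to the range of $S\begin{pmatrix} a \\ b\end{pmatrix} = \begin{pmatrix} a + ab^{-1}b \\ b^{-1}b\end{pmatrix} = \begin{pmatrix} 2a \\ 1\end{pmatrix}$ — still off. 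The upshot is that the computation of $l(SqS^{-1})$ is the one delicate point: I will choose $S$ so that $SqS^{-1}$ becomes a \emph{diagonal} projection $\begin{pmatrix} c & 0 \\ 0 & 0\end{pmatrix}$ on the block with $c$ invertible (forcing $c = 1$ by idempotency), whence $l(SqS^{-1})$ on the block equals $\begin{pmatrix} 1 & 0 \\ 0 & 0\end{pmatrix}$; adding the contribution from the $p^\perp\wedge q$ summand (where $S$ is the identity and $q$ restricts to $1$) gives $l(SqS^{-1}) = (p^\perp\wedge q) \oplus \begin{pmatrix} 1 & 0\\ 0 & 0\end{pmatrix} = p\vee q - p$, as required — recall $p\vee q - p = (p^\perp\wedge q) + e_1$ since $p\wedge q = 0$. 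The correct $S$ is obtained by "Gram--Schmidt on the column $\begin{pmatrix} a\\ b\end{pmatrix}$": declare the first new basis vector to be (a multiple of) this column and the second to be a complementary vector, i.e. take $S^{-1}$ to have first column $\begin{pmatrix} a \\ b\end{pmatrix}$ and to fix $p$. Thus set $S^{-1} = \begin{pmatrix} a & a^{-1}(1)\cdot 0 \\ b & b^{-1}\cdot(-?) \end{pmatrix}$; the clean formula is $S^{-1} = \begin{pmatrix} 1 & a \\ 0 & b \end{pmatrix}$ so that $S = \begin{pmatrix} 1 & -ab^{-1} \\ 0 & b^{-1}\end{pmatrix}$, and then $SqS^{-1} = S\begin{pmatrix} a \\ b\end{pmatrix}\begin{pmatrix} a & b\end{pmatrix}S^{-1}$ with $S\begin{pmatrix} a \\ b\end{pmatrix} = \begin{pmatrix} a - a \\ 1\end{pmatrix} = \begin{pmatrix} 0 \\ 1\end{pmatrix}$ and $\begin{pmatrix} a & b\end{pmatrix}S^{-1} = \begin{pmatrix} a & b\end{pmatrix}\begin{pmatrix} 1 & a\\ 0 & b\end{pmatrix} = \begin{pmatrix} a & a^2 + b^2\end{pmatrix} = \begin{pmatrix} a & 1\end{pmatrix}$; hence $SqS^{-1} = \begin{pmatrix} 0 \\ 1\end{pmatrix}\begin{pmatrix} a & 1\end{pmatrix} = \begin{pmatrix} 0 & 0 \\ a & 1\end{pmatrix}$, whose left support is $\begin{pmatrix} 0 & 0 \\ 0 & 1\end{pmatrix}$. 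That is $e_2$, not $e_1$; so I will finally swap rows (conjugate further by $\begin{pmatrix} 0 & 1 \\ 1 & 0\end{pmatrix}$), i.e. use $S = \begin{pmatrix} 0 & b^{-1} \\ 1 & -ab^{-1}\end{pmatrix}$ — but this does not fix $p$. The resolution is to accept $l(SqS^{-1})$ equals the "$e_2$-corner" plus $(p^\perp\wedge q)$, which by symmetry of the Halmos decomposition is again $p\vee q - p$ once one checks $p\vee q - p \sim p^\perp\wedge q \oplus e_2$; I will verify this equality of projections directly, since $p = (p\wedge q^\perp)\oplus e_1$-part vanishes on exactly the complement of $e_1$ inside $(e_1+e_2)H$.

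The main obstacle, then, is bookkeeping: getting the block formula for $S$ so that simultaneously $Sp = p$, $S$ acts trivially off $p\vee q$, and $l(SqS^{-1})$ lands exactly on $p\vee q - p$ rather than on some equivalent-but-different projection. The conceptual content is light — it is just an explicit change of basis straightening the "graph" column $\begin{pmatrix} a \\ b\end{pmatrix}$ using invertibility of $b$ in $LS(M_{p,q})$, guaranteed by Lemma \ref{inverse} via the LS-orthogonality hypothesis — but the identities $a^2 + b^2 = 1$, $[a,b] = 0$, and the precise identification of the Halmos blocks with $p\vee q$, $p\vee q - p$ etc. all have to be deployed carefully. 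Once the right $S$ is pinned down, invertibility in $LS(M)$ is immediate from $b^{-1}\in LS(M_{p,q})$, and all the required relations are one-line matrix computations.
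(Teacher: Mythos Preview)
Your fifth attempt is exactly the paper's proof: the paper takes
\[
S = 1\oplus 1\oplus 1\oplus \begin{pmatrix} 1 & -ab^{-1} \\ 0 & b^{-1}\end{pmatrix},\qquad
S^{-1} = 1\oplus 1\oplus 1\oplus \begin{pmatrix} 1 & a \\ 0 & b\end{pmatrix},
\]
and computes, just as you did, that on the $2\times 2$ block $SqS^{-1} = \begin{pmatrix} 0&0\\ a&1\end{pmatrix}$, whose left support is $\begin{pmatrix} 0&0\\ 0&1\end{pmatrix}$. The only reason you kept going past this point is a bookkeeping slip: you wrote $p\vee q - p = (p^{\perp}\wedge q) + e_1$, but in the Halmos decomposition $e_1 = p - p\wedge q - p\wedge q^{\perp}$ is a \emph{subprojection of $p$}, so in fact
\[
p\vee q - p = (p^{\perp}\wedge q) + e_2 = 0\oplus 1\oplus 0\oplus \begin{pmatrix} 0&0\\ 0&1\end{pmatrix}.
\]
Thus when your computation produced the $e_2$-corner on the block, you were done; the row-swap and the ``$\sim$ versus $=$'' detour are unnecessary. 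Also note that $Sp=p$ on the block follows immediately from the same observation you made in attempt~3 (the matrix is upper triangular with $1$ in the top-left, and $p$ kills the second column), so nothing further is needed there either.
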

\begin{proof}
Put $S:= 1\oplus1\oplus1\oplus 
\begin{pmatrix}
1&-ab^{-1} \\
0&b^{-1}
\end{pmatrix}$ with respect to the decomposition as above. 
Then $S$ is an element in $LS(M)$ with inverse $S^{-1}= 1\oplus1\oplus1\oplus 
\begin{pmatrix}
1&a \\
0&b
\end{pmatrix}$. 
It is easy to see that 
\[
S(p\vee q)^{\perp}=(p\vee q)^{\perp}S= (p\vee q)^{\perp} = 0\oplus0\oplus1\oplus 
\begin{pmatrix}
0&0 \\
0&0
\end{pmatrix}.
\] 
We also have 
\[
Sp = 1\oplus 0\oplus 0\oplus \begin{pmatrix}
1&0 \\
0&0
\end{pmatrix} = p
\]
and 
\[
l(SqS^{-1}) = l\left(0\oplus 1\oplus 0\oplus \begin{pmatrix}
0&0 \\
a&1
\end{pmatrix}\right) = 0\oplus 1\oplus 0\oplus \begin{pmatrix}
0&0 \\
0&1
\end{pmatrix} = p\vee q- p. 
\]
\end{proof}

\begin{lemma}\label{LS}
Let $M$ be a von Neumann algebra and $p, q\in \P(M)$ be two projections with $p\wedge q = 0$. 
Then the following are equivalent. 
\begin{enumerate}
\item The projection $p$ is LS-orthogonal to $q$. 
\item There exists a lattice automorphism $\Phi$ of $\P(M)$ such that $\Phi(p)\perp\Phi(q)$. 
\item If a projection $p_0\in \P(M)$ satisfies $p_0\leq p$ and $p_0\vee q = p\vee q$, then $p_0=p$. 
\item The projection $q$ is LS-orthogonal to $p$. 
\end{enumerate}
\end{lemma}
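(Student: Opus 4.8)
The plan is to prove the cycle $(1)\Rightarrow(2)\Rightarrow(3)\Rightarrow(1)$ and then to deduce $(1)\Leftrightarrow(4)$ at the end, using that condition $(2)$ is symmetric in $p$ and $q$ (orthogonality is symmetric), so that the equivalence $(1)\Leftrightarrow(2)$, read for the pair $(q,p)$ in place of $(p,q)$, is exactly $(4)\Leftrightarrow(2)$. For $(1)\Rightarrow(2)$ I would feed the LS-orthogonality of $p$ to $q$ into Lemma~\ref{orthogonalize}, obtaining an invertible $S\in LS(M)$ with $Sp=p$ and $l(SqS^{-1})=p\vee q-p$. Conjugation $x\mapsto SxS^{-1}$ is a ring automorphism of $LS(M)$, so by Proposition~\ref{converse} (applied with $N=M$) it induces a lattice automorphism $\Phi$ of $\P(M)$ with $\Phi(l(x))=l(SxS^{-1})$ for every $x\in LS(M)$. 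Then $\Phi(p)=l(SpS^{-1})=l(pS^{-1})=l(p)=p$, since right multiplication by an invertible element does not change the left support, while $\Phi(q)=l(SqS^{-1})=p\vee q-p$, which is orthogonal to $p=\Phi(p)$; this is $(2)$.

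The implication $(2)\Rightarrow(3)$ is purely order-theoretic. Let $\Phi$ witness $(2)$ and let $p_0\leq p$ satisfy $p_0\vee q=p\vee q$. Then $\Phi(p_0)\leq\Phi(p)$, so $\Phi(p_0)\perp\Phi(q)$; moreover $\Phi(p_0)\vee\Phi(q)=\Phi(p\vee q)=\Phi(p)\vee\Phi(q)=\Phi(p)+\Phi(q)$, because $\Phi(p)\perp\Phi(q)$. From $\Phi(p)\leq\Phi(p_0)+\Phi(q)$ and $\Phi(p)\perp\Phi(q)$ we get $\Phi(p)\leq\Phi(p_0)$, hence $\Phi(p_0)=\Phi(p)$ and $p_0=p$ by injectivity of $\Phi$. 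Running the same cancellation for the pair $(q,p)$ will then turn the (already proved) equivalence $(1)\Leftrightarrow(2)$ into $(4)\Leftrightarrow(2)$, so $(1)\Leftrightarrow(4)$ follows.

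The heart of the lemma is $(3)\Rightarrow(1)$, which I would prove in contrapositive form: supposing $p$ is not LS-orthogonal to $q$, I must exhibit a projection $p_0\leq p$ with $p_0\neq p$ and $p_0\vee q=p\vee q$. By Subsection~\ref{halmos} I may pass to the generic part and work inside $(e_1+e_2)M(e_1+e_2)\cong\M_2(M_{p,q})$, where --- over and above the degenerate Halmos summands, on which $p$ and $q$ are already orthogonal and which I will simply leave fixed --- one has $p=\begin{pmatrix}1&0\\0&0\end{pmatrix}$ and $q=\begin{pmatrix}a^2&ab\\ab&b^2\end{pmatrix}$ with $a,b\in M_{p,q}$ positive and injective, $a^2+b^2=1$, and $b$ not invertible in $LS(M_{p,q})$. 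For a projection $s\in\P(M_{p,q})$ put $p_0:=(p\wedge q^{\perp})+s$; then $p_0\leq p$, and $p_0\neq p$ exactly when $s\neq 1_{M_{p,q}}$. A short Halmos-type computation --- in which $\zeta\mapsto(b\zeta,-a\zeta)$ is an isometry precisely because $a^2+b^2=1$ --- shows that $p_0\vee q=p\vee q$ if and only if $sb$ is injective. Hence it suffices to produce a projection $s\neq 1_{M_{p,q}}$ with $sb$ injective. This is where Lemma~\ref{inverse} enters: since $b$ is not invertible in $LS(M_{p,q})$, that lemma (after a spectral cut-off) yields a nonzero projection $f\in\P(M_{p,q})$ such that no nonzero positive element of $M_{p,q}$ is dominated by both $f$ and $b$. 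I claim $s:=f^{\perp}$ works. Indeed, if $f^{\perp}b$ were not injective, let $1-r$ be the projection onto $\ker(f^{\perp}b)$; then $b(1-r)b=\bigl(b(1-r)\bigr)\bigl(b(1-r)\bigr)^{*}$ is nonzero (as $b$ is injective and $1-r\neq 0$), satisfies $b(1-r)b\leq b^{2}\leq\|b\|\,b$, and has $l\bigl(b(1-r)b\bigr)=l\bigl(b(1-r)\bigr)\leq f$ because $b(1-r)H\subseteq fH$, so a suitable positive multiple of $b(1-r)b$ is a nonzero element dominated by both $b$ and $f$ --- contradicting the choice of $f$. Thus $f^{\perp}b$ is injective with $f^{\perp}\neq 1_{M_{p,q}}$, and $p_0=(p\wedge q^{\perp})+f^{\perp}$ violates $(3)$.

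I expect the genuine obstacle to be precisely this last step: converting the lattice condition $(3)$ into the analytic assertion ``$sb$ injective $\Rightarrow s=1_{M_{p,q}}$'' by way of Halmos's theorem, and then squeezing an honest counterexample out of the mere failure of $LS$-invertibility of $b$ through Lemma~\ref{inverse} and the support estimate for $b(1-r)b$. The remaining steps --- $(1)\Rightarrow(2)$, $(2)\Rightarrow(3)$, and the symmetry argument delivering $(4)$ --- should be essentially formal.
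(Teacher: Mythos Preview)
Your proposal is correct and follows the same architecture as the paper: the cycle $(1)\Rightarrow(2)\Rightarrow(3)\Rightarrow(1)$ via Lemma~\ref{orthogonalize} and Proposition~\ref{converse}, the symmetry of $(2)$ to get $(4)$, and for the contrapositive of $(3)\Rightarrow(1)$ the combination of Lemma~\ref{inverse} with a spectral cut-off to produce a nonzero projection $f\,(=p_1)$ having no common nonzero positive lower bound with $b$, then setting $p_0$ equal to $p$ with the $f$-piece removed in the generic Halmos part. The only difference is cosmetic: to verify $p_0\vee q=p\vee q$, the paper compresses the relevant meet by $\begin{pmatrix}1&0\\0&0\end{pmatrix}$ and uses the order inequality directly, whereas you parametrize the range of $q^\perp$ by the isometry $\zeta\mapsto(b\zeta,-a\zeta)$, reduce to injectivity of $f^\perp b$, and then derive a contradiction via the support estimate for $b(1-r)b$; both computations encode the same obstruction.
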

\begin{proof}
$(1)\Rightarrow (2)$ Take $S\in LS(M)$ as in the preceding lemma and let $\Phi$ be the unique lattice isomorphism such that $\Phi(l(x))= l(SxS^{-1})$, $x\in LS(X)$.
 
$(2)\Rightarrow (3)$ Clear. 

$(3)\Rightarrow (1)$ 
We use the decomposition \eqref{eq:halmos}.
By Lemma \ref{inverse}, if $(1)$ does not hold, then there exists an element $d\in M_{p, q +}\setminus \{0\}$ such that $\{x\in M_{p, q +}\mid x\leq b,\,\, x\leq d\} = \{0\}$. 
Take the nonzero spectral projection $p_1:= \chi_{(\lVert d\rVert/2, \lVert d\rVert]}(d)\in \P(M_{p, q})$. 
It follows that 
\begin{equation}\label{Mpq}
\{x\in M_{p, q +}\mid x\leq b,\,\, x\leq p_1\} = \{0\}.
\end{equation} 
Indeed, if $x\in M_{p, q +}$ satisfies $x\leq b$ and $x\leq p_1$, take a positive real number $c$ with $c\leq 1$ and $c\leq \lVert d\rVert/2$, then $cx\leq cb\leq b$ and $cx\leq cp_1\leq d$, hence $cx=0$ and we obtain $x=0$.
Put $p_0 := 1\oplus 0\oplus 0\oplus \begin{pmatrix}
1-p_1&0 \\
0&0
\end{pmatrix}\in \P(M)$. 
Then $p_0\leq p$ and $p_0\neq p$. 
We prove that $p_0\vee q = p\vee q$, or equivalently, $\begin{pmatrix}
1-p_1&0 \\
0&0
\end{pmatrix} \vee \begin{pmatrix}
a^2&ab \\
ab&b^2
\end{pmatrix} = 1_{\mathbb{M}_2(M_{p, q})}$, which is in turn equivalent to 
\begin{equation}\label{0}
\begin{pmatrix}
p_1&0 \\
0&1
\end{pmatrix} \wedge \begin{pmatrix}
b^2&-ab \\
-ab&a^2
\end{pmatrix} = 0_{\mathbb{M}_2(M_{p, q})}. 
\end{equation}
We have
\[
\begin{split}
&\begin{pmatrix}
1&0 \\
0&0
\end{pmatrix}\left(\begin{pmatrix}
p_1&0 \\
0&1
\end{pmatrix} \wedge \begin{pmatrix}
b^2&-ab \\
-ab&a^2
\end{pmatrix}\right)\begin{pmatrix}
1&0 \\
0&0
\end{pmatrix}\\
&\leq \begin{pmatrix}
1&0 \\
0&0
\end{pmatrix}\begin{pmatrix}
p_1&0 \\
0&1
\end{pmatrix}\begin{pmatrix}
1&0 \\
0&0
\end{pmatrix} = \begin{pmatrix}
p_1&0 \\
0&0
\end{pmatrix}
\end{split}
\]
and 
\[
\begin{split}
&\begin{pmatrix}
1&0 \\
0&0
\end{pmatrix}\left(\begin{pmatrix}
p_1&0 \\
0&1
\end{pmatrix} \wedge \begin{pmatrix}
b^2&-ab \\
-ab&a^2
\end{pmatrix}\right)\begin{pmatrix}
1&0 \\
0&0
\end{pmatrix}\\
&\leq \begin{pmatrix}
1&0 \\
0&0
\end{pmatrix}\begin{pmatrix}
b^2&-ab \\
-ab&a^2
\end{pmatrix}\begin{pmatrix}
1&0 \\
0&0
\end{pmatrix} = \begin{pmatrix}
b^2&0 \\
0&0
\end{pmatrix}.
\end{split}
\]
Since the square root mapping preserves the order of positive operators, \eqref{Mpq} implies that the square root of the operator 
\[
\begin{pmatrix}
1&0 \\
0&0
\end{pmatrix}\left(\begin{pmatrix}
p_1&0 \\
0&1
\end{pmatrix} \wedge \begin{pmatrix}
b^2&-ab \\
-ab&a^2
\end{pmatrix}\right)\begin{pmatrix}
1&0 \\
0&0
\end{pmatrix}
\]
is equal to $0$. Hence
\[
\begin{pmatrix}
1&0 \\
0&0
\end{pmatrix}\left(\begin{pmatrix}
p_1&0 \\
0&1
\end{pmatrix} \wedge \begin{pmatrix}
b^2&-ab \\
-ab&a^2
\end{pmatrix}\right)\begin{pmatrix}
1&0 \\
0&0
\end{pmatrix}=0, 
\]
or equivalently, 
$\begin{pmatrix}
p_1&0 \\
0&1
\end{pmatrix} \wedge \begin{pmatrix}
b^2&-ab \\
-ab&a^2
\end{pmatrix}\leq \begin{pmatrix}
0&0 \\
0&1
\end{pmatrix}$ holds. However, we know $\begin{pmatrix}
0&0 \\
0&1
\end{pmatrix}\wedge\begin{pmatrix}
b^2&-ab \\
-ab&a^2
\end{pmatrix}  = 0$, so we finally obtain \eqref{0}.

Exchanging the roles of $p$ and $q$, we also obtain $(2)\Leftrightarrow(4)$.
\end{proof}

Let us recall the setting of Theorem A: 
Let $M$, $N$ be von Neumann algebras. 
Suppose that $M$ is without type I$_1$ and I$_2$ direct summands and $\Phi\colon \P(M)\to \P(N)$ is a lattice isomorphism. 
By the preceding lemma, we see that $\Phi$ preserves LS-orthogonality in both directions, that is, for any $p, q\in \P(M)$, $p$ and $q$ are LS-orthogonal if and only if $\Phi(p)$ and $\Phi(q)$ are LS-orthogonal. 

In what follows, we show the existence of $\Psi$ as in the statement of Theorem A in the case $M$ has order $3$. 
Thus $M$ can be identified with $\M_3(\hat{M})$ for some von Neumann algebra $\hat{M}$. 
Put 
\[
e_1^M :=\begin{pmatrix}
1&0&0 \\
0&0&0 \\
0&0&0
\end{pmatrix},\,\, 
e_2^M :=\begin{pmatrix}
0&0&0 \\
0&1&0 \\
0&0&0
\end{pmatrix},\,\, 
e_3^M :=\begin{pmatrix}
0&0&0 \\
0&0&0 \\
0&0&1
\end{pmatrix} \in \P(\M_3(\hat{M})). 
\]

Put $e_1:=\Phi(e_1^M)$, $e_2:=\Phi(e_2^M)$, $e_3:=\Phi(e_3^M)$. 
We know that $e_1\vee e_2$ is LS-orthogonal to $e_3$, and $e_1$ is LS-orthogonal and $e_2$. 
In addition, we know $e_1\vee e_2\vee e_3 = 1$. 
Take $S_{e_1\vee e_2, e_3}$ and $S_{e_1, e_2}$ as in the statement of Lemma \ref{orthogonalize}. 
Consider the lattice automorphism $\varphi\colon \P(N)\to \P(N)$ determined by the condition $\varphi(l(x))= l(S_{e_1, e_2}S_{e_1\vee e_2, e_3} x S_{e_1\vee e_2, e_3}^{-1}S_{e_1, e_2}^{-1})\,(=l(S_{e_1, e_2}S_{e_1\vee e_2, e_3} x))$, $x\in LS(N)$. 
Then a moment's calculation shows that $\varphi(e_1), \varphi(e_2), \varphi(e_3)$ are mutually orthogonal and $\varphi(e_1)+ \varphi(e_2)+ \varphi(e_3) = 1_N$. 

\begin{lemma}\label{sim}
We have $\varphi(e_1)\sim \varphi(e_2)\sim \varphi(e_3)$ in $N$. 
\end{lemma}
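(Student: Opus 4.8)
The plan is to recognize the Murray--von Neumann equivalence $e_1^M\sim e_2^M\sim e_3^M$ as a purely lattice-theoretic feature and transport it through $\Phi$ and $\varphi$. The relevant notion is perspectivity: in a von Neumann algebra $L$, two projections $e,f\in\P(L)$ are \emph{perspective} if they admit a common complement in $\P(L)$, i.e.\ there is $r\in\P(L)$ with $e\vee r=f\vee r=1$ and $e\wedge r=f\wedge r=0$. The key fact I would use is that perspectivity and Murray--von Neumann equivalence are, in effect, the same thing.

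First I would record the two halves of this fact. If $p\perp q$ in $\P(L)$ and $p\sim q$ via a partial isometry $v$ (so $vv^*=p$, $v^*v=q$, $v=pvq$), then $v^2=(v^*)^2=0$ because $qp=0$, hence $v+v^*$ is a symmetry supported on $p\vee q=p+q$; thus $d:=\tfrac{1}{2}(p+q+v+v^*)\in\P(L)$, and a short computation gives $d\wedge p=d\wedge q=0$ and $d\vee p=d\vee q=p\vee q$, so that $d+(p\vee q)^{\perp}$ is a common complement of $p$ and $q$: they are perspective. Conversely, if $p,q\in\P(L)$ have a common complement $r$, then Kaplansky's parallelogram law yields $p=p-p\wedge r\sim p\vee r-r=1-r$ and, likewise, $q\sim 1-r$, whence $p\sim q$; note that this conclusion lives in the ambient algebra $L$, with no passage to a corner.

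With this in hand the proof is immediate. Since $M$ has order $3$, the projections $e_1^M,e_2^M,e_3^M$ are mutually orthogonal and satisfy $e_1^M\sim e_2^M\sim e_3^M$ in $M$; by the first half above, $e_1^M$ and $e_2^M$ are perspective in $\P(M)$, and so are $e_2^M$ and $e_3^M$. Perspectivity is manifestly preserved by any lattice isomorphism, since it is expressed through $\vee$, $\wedge$, $0$ and $1$; hence both $\Phi$ and the lattice automorphism $\varphi$ preserve it, and applying $\varphi\circ\Phi$ we find that $\varphi(e_1),\varphi(e_2)$ are perspective in $\P(N)$ and so are $\varphi(e_2),\varphi(e_3)$. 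The second half above then gives $\varphi(e_1)\sim\varphi(e_2)$ and $\varphi(e_2)\sim\varphi(e_3)$ in $N$, which is the assertion. I expect the only point that demands care is the correct use of Kaplansky's parallelogram law to upgrade perspectivity to genuine Murray--von Neumann equivalence in $N$; constructing the witnessing complement and transporting perspectivity through $\Phi$ and $\varphi$ are routine.
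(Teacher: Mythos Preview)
Your proof is correct and follows essentially the same approach as the paper: both arguments transport perspectivity through the lattice isomorphism $\varphi\circ\Phi$ and then invoke the parallelogram law (the paper phrases it via Halmos's two-projection decomposition) to upgrade perspectivity to Murray--von Neumann equivalence in $N$. The paper simply writes down the explicit common complement $\tfrac{1}{2}\begin{psmallmatrix}1&1&0\\1&1&0\\0&0&2\end{psmallmatrix}$, which is precisely your $d+(p\vee q)^{\perp}$ specialized to $p=e_1^M$, $q=e_2^M$ with the obvious matrix-unit partial isometry.
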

\begin{proof}
Subsection \ref{halmos} implies: 
For $p, q\in \P(N)$, if $p\vee q = 1$ and $p\wedge q = 0$, then $p^{\perp}\sim q$.
Since $\varphi\circ\Phi$ is a lattice isomorphism, we obtain 
\[
\varphi(e_1) = \varphi\circ\Phi(e_1^M) \sim \left(\varphi\circ\Phi\left(\frac{1}{2}\begin{pmatrix}
1&1&0 \\
1&1&0 \\
0&0&2
\end{pmatrix}\right)\right)^{\perp}\sim \varphi\circ\Phi(e_2^M) = \varphi(e_2). 
\]
Similarly, we obtain $\varphi(e_1)\sim \varphi(e_3)$. 
\end{proof}

It suffices to consider $\varphi\circ \Phi$ instead of $\Phi$. 
Hence we may identify $N$ with $\M_3(\hat{N})$ for some von Neumann algebra $\hat{N}$, and we may assume $\Phi(e_1^M)=e_1^N$, $\Phi(e_2^M)=e_2^N$ and $\Phi(e_3^M)=e_3^N$, where
\[
e_1^N := 
\begin{pmatrix}
1&0&0 \\
0&0&0 \\
0&0&0
\end{pmatrix},\,\, 
e_2^N :=
\begin{pmatrix}
0&0&0 \\
0&1&0 \\
0&0&0
\end{pmatrix},\,\, 
e_3^N :=
\begin{pmatrix}
0&0&0 \\
0&0&0 \\
0&0&1
\end{pmatrix} \in \P(\M_3(\hat{N})). 
\]

Let $x\in LS(\hat{M})$. 
Suppose that $\hat{M}\subset B(K)$. 
Viewing $x$ as a closed operator, we see that the collection 
\[
\left\{\begin{pmatrix}
\xi \\
x\xi \\
0
\end{pmatrix}\in K\oplus K\oplus K \,\middle|\, \xi\in \operatorname{dom}x\right\} 
\]
is a closed subspace in $K\oplus K\oplus K$. 
Take the projection $P_{12}[x]\in \P(B(K\oplus K\oplus K))$ onto this subspace. 
Then we have
\begin{equation}\label{eq:proj}
P_{12}[x] = \begin{pmatrix}
(1+x^*x)^{-1}&(1+x^*x)^{-1}x^*&0 \\
x(1+x^*x)^{-1}&x(1+x^*x)^{-1}x^*&0 \\
0&0&0
\end{pmatrix}
\end{equation}
and hence $P_{12}[x]\in \P(\M_3(\hat{M}))$. 
Similarly, let $P_{13}[x], P_{23}[x]\in \P(\M_3(\hat{M}))$ denote the projections onto 
\[
\left\{\begin{pmatrix}
\xi \\
0 \\
x\xi
\end{pmatrix}\in K\oplus K\oplus K \,\middle|\, \xi\in \operatorname{dom}x\right\},\quad 
\left\{\begin{pmatrix}
0 \\
\xi \\
x\xi
\end{pmatrix}\in K\oplus K\oplus K \,\middle|\, \xi\in \operatorname{dom}x\right\}, 
\]
respectively. 

\begin{lemma}\label{P_x}
Let $Q\in \P(\M_3(\hat{M}))$. 
Then the following conditions are equivalent: 
\begin{enumerate}
\item There exists an $x\in LS(\hat{M})$ such that $Q=P_{12}[x]$. 
\item $Q\vee e_2^M = e_1^M\vee e_2^M$, and $Q$ is LS-orthogonal to $e_2^M$.
\end{enumerate}
\end{lemma}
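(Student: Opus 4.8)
The plan is to prove both implications by explicit computation in $\M_3(\hat M)$, using formula \eqref{eq:proj} for the ``easy'' direction and the characterization of LS-orthogonality via Lemma \ref{LS}(3) for the ``hard'' direction. First I would check $(1)\Rightarrow(2)$. Given $Q=P_{12}[x]$ for $x\in LS(\hat M)$, the identity \eqref{eq:proj} shows that $Q$ is supported on the upper-left $2\times 2$ block, so $Q\leq e_1^M\vee e_2^M$; since the range of $Q$ contains all vectors $(\xi,x\xi,0)^T$ with $\xi\in\operatorname{dom}x$ and this domain is dense, one computes that $Q\vee e_2^M$ (the closed span of the range of $Q$ together with $e_2^M K$) equals $e_1^M\vee e_2^M$, because projecting onto the first coordinate already recovers a dense, hence (being a closed subspace) all of $e_1^M K$. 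For LS-orthogonality of $Q$ and $e_2^M$: apply the two-projection decomposition of Subsection \ref{halmos} to the pair $(Q,e_2^M)$. Since $Q\wedge e_2^M=0$ and $Q\vee e_2^M=e_1^M\vee e_2^M$, the relevant corner is $M_{Q,e_2^M}\cong e_1^M\M_3(\hat M)e_1^M\cong\hat M$, and the operator ``$b$'' in the decomposition is, up to the identification, $(1+x^*x)^{-1/2}$ — which is invertible in $LS(\hat M)$ (its inverse is $(1+x^*x)^{1/2}\in LS(\hat M)$, or one checks condition (2) of Lemma \ref{inverse} directly). Hence $Q$ is LS-orthogonal to $e_2^M$.

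Next I would prove $(2)\Rightarrow(1)$. Assume $Q\vee e_2^M=e_1^M\vee e_2^M$ and $Q$ is LS-orthogonal to $e_2^M$. The condition $Q\vee e_2^M\leq e_1^M\vee e_2^M$ together with $Q\wedge e_2^M=0$ and LS-orthogonality will force $Q\leq e_1^M\vee e_2^M$: indeed if $Q$ had a nonzero component outside $e_1^M\vee e_2^M$, write $Q_0:=(e_1^M\vee e_2^M)$-part; one uses Lemma \ref{LS}(3) with $p=Q$, $q=e_2^M$ to show $Q$ cannot be strictly above a proper subprojection with the same join, which rules out such a component (more carefully: $Q\wedge(e_1^M\vee e_2^M)$ joined with $e_2^M$ already gives $e_1^M\vee e_2^M$, so by (3) $Q=Q\wedge(e_1^M\vee e_2^M)$). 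So we may work inside $(e_1^M\vee e_2^M)\M_3(\hat M)(e_1^M\vee e_2^M)\cong\M_2(\hat M)$ with $Q$ a projection there satisfying $Q\wedge e_2^M=0$, $Q\vee e_2^M=1$, and $Q$ LS-orthogonal to $e_2^M$. Apply Subsection \ref{halmos} to $(Q,e_2^M)$ inside $\M_2(\hat M)$: the absence of the $p\wedge q$, $p^\perp\wedge q$, etc.\ summands (they all vanish by $Q\wedge e_2^M=Q^\perp\wedge e_2^M=Q\wedge e_2^{M\perp}=0$, which follow from the join/meet conditions) gives $Q=\begin{pmatrix}a^2&ab\\ab&b^2\end{pmatrix}$ with $a^2+b^2=1$, $a,b$ positive injective in $\hat M$, and LS-orthogonality says $a$ (the corner playing the role opposite to $e_2^M$) is invertible in $LS(\hat M)$. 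Then set $x:=ba^{-1}\in LS(\hat M)$; a direct computation using $a^2+b^2=1$ and commutativity of $a,b$ shows $a^2=(1+x^*x)^{-1}$, $ab=(1+x^*x)^{-1}x^*$, $b^2=x(1+x^*x)^{-1}x^*$, i.e.\ $Q=P_{12}[x]$ via \eqref{eq:proj}. (One should double-check the bookkeeping of which corner is $a$ and which is $b$, and accordingly whether $x=ba^{-1}$ or $x=a^{-1}b$; this is where orientation conventions from Subsection \ref{halmos} must be tracked carefully.)

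The main obstacle I expect is the bookkeeping in $(2)\Rightarrow(1)$: correctly identifying, in the Halmos decomposition of the pair $(Q,e_2^M)$, which of the two positive commuting operators $a,b$ corresponds to the block whose invertibility is equivalent to LS-orthogonality, and then verifying that the resulting $x$ genuinely lies in $LS(\hat M)$ (rather than merely being affiliated) and that the closure of its graph is exactly the range of $Q$. The passage from ``$Q\vee e_2^M=e_1^M\vee e_2^M$'' to ``$Q\leq e_1^M\vee e_2^M$'' also deserves care, since in a non-modular lattice $p\vee q\leq r\vee q$ does not in general give $p\leq r$; here it works because of the extra LS-orthogonality hypothesis via Lemma \ref{LS}(3), and I would isolate that as a small preliminary observation before the computation. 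Everything else — the formula \eqref{eq:proj}, the invertibility of $(1+x^*x)^{\pm1/2}$ in $LS(\hat M)$, and the matrix algebra — is routine.
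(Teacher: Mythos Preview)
Your $(1)\Rightarrow(2)$ is essentially the paper's argument, modulo some loose identifications (the corner $M_{Q,e_2^M}$ is not literally $\hat M$, but the invertibility conclusion survives for the reasons the paper gives). One minor point: the ``passage from $Q\vee e_2^M=e_1^M\vee e_2^M$ to $Q\leq e_1^M\vee e_2^M$'' is not subtle at all --- $Q\leq Q\vee e_2^M=e_1^M\vee e_2^M$ is immediate, no appeal to Lemma~\ref{LS}(3) is needed.

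There is, however, a genuine gap in your $(2)\Rightarrow(1)$. You claim that the degenerate Halmos summands $Q^\perp\wedge e_2^M$ and $Q\wedge e_2^{M\perp}$ vanish; they do not. The hypotheses $Q\wedge e_2^M=0$ and $Q\vee e_2^M=1$ (in $\M_2(\hat M)$) give you $Q\wedge e_2^M=0$ and $Q^\perp\wedge e_2^{M\perp}=0$, but say nothing about the other two meets. Concretely, take $x=0$: then $Q=P_{12}[0]=e_1^M$, so $Q\wedge e_2^{M\perp}=e_1^M\neq 0$ and $Q^\perp\wedge e_2^M=e_2^M\neq 0$. As a consequence your matrix form $Q=\begin{pmatrix}a^2&ab\\ab&b^2\end{pmatrix}$ is not valid in general, the operators $a,b$ live only in the proper corner $M_{e_2^M,Q}\subsetneq\hat M$, and the formula $x=ba^{-1}$ does not produce an element of $LS(\hat M)$.

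The paper avoids this by not assuming those summands vanish. It keeps the full Halmos decomposition of $(e_2^M,Q)$, observes that $e_1^M$ is exactly the sum of the $p^\perp\wedge q$ summand and the $\begin{pmatrix}0&0\\0&1\end{pmatrix}$ block, and then builds a partial isometry $w$ with $ww^*=e_1^M$ and $w^*w=Q$. Writing $w$ as a $3\times 3$ matrix over $\hat M$ with first row $(w_1,w_2,0)$, the invertibility of $b$ in the corner translates into invertibility of $w_1$ in $LS(\hat M)$, and then $x:=w_2^*w_1^{-1}\in LS(\hat M)$ does the job. Your approach can be repaired along these lines, but as written the vanishing claim is false and the construction of $x$ breaks down.
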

\begin{proof}
$(1)\Rightarrow (2)$
Let $Q=P_{12}[x]$.
Since $(1+x^*x)^{-1}$ is a positive injective operator, we have $Q\vee e_2^M = e_1^M\vee e_2^M$ by \eqref{eq:proj}. 
Let $x=v\lvert x\rvert$ be the polar decomposition. 
By \eqref{eq:proj}, we have
\[
Q=P_{12}[x] = \begin{pmatrix}
(1+\lvert x\rvert^2)^{-1}&(1+\lvert x\rvert^2)^{-1}\lvert x\rvert v^*&0 \\
v\lvert x\rvert(1+\lvert x\rvert^2)^{-1}&v\lvert x\rvert(1+\lvert x\rvert^2)^{-1}\lvert x\rvert v^*&0 \\
0&0&0
\end{pmatrix}. 
\]
Hence we have 
\[
Q\wedge e_2^M \leq \begin{pmatrix}
0&0&0 \\
0&v\lvert x\rvert(1+\lvert x\rvert^2)^{-1}\lvert x\rvert v^*&0 \\
0&0&0
\end{pmatrix}. 
\]
Since $1-v\lvert x\rvert(1+\lvert x\rvert^2)^{-1}\lvert x\rvert v^*$ is a positive injective operator, we see that $Q\wedge e_2^M=0$. 
As in \eqref{eq:halmos}, we may decompose $e_2^M$ and $Q$ in the following form: 
\[
e_2^M= 1\oplus 0\oplus 0\oplus
\begin{pmatrix}
1&0\\
0&0
\end{pmatrix},\,\, 
Q= 0\oplus 1\oplus 0\oplus
\begin{pmatrix}
a^2&ab \\
ab&b^2
\end{pmatrix}. 
\]
We also have 
\[
e_1^M= 0\oplus 1\oplus 0\oplus
\begin{pmatrix}
0&0\\
0&1
\end{pmatrix} 
\]
with respect to the same decomposition. 
Recall that $(1+x^*x)^{-1}$ is invertible in $LS(\hat{M})$, or equivalently, 
$e_1^MQe_1^M$ is invertible in $LS(e_1^M M e_1^M)$. 
This means that  
\[
0\oplus 1\oplus 0\oplus
\begin{pmatrix}
0&0 \\
0&b^2
\end{pmatrix}
\]
is invertible in $LS(e_1^M M e_1^M)$, which in particular implies the invertibility of $b$ in $LS(M_{e_2^M, Q})$.\\
$(2)\Rightarrow (1)$
As in \eqref{eq:halmos}, we may decompose $e_2^M$ and $Q$ in the following form: 
\[
e_2^M= 1\oplus 0\oplus 0\oplus
\begin{pmatrix}
1&0\\
0&0
\end{pmatrix},\,\, 
Q= 0\oplus 1\oplus 0\oplus
\begin{pmatrix}
a^2&ab \\
ab&b^2
\end{pmatrix}. 
\]
Note that $b$ is invertible as a locally measurable operator. 
Consider the partial isometry
\[
w= 0\oplus 1\oplus 0\oplus
\begin{pmatrix}
0& 0\\
a&b
\end{pmatrix}. 
\]
We have $ww^*=e_1^M$ and $w^*w=Q$. 
A moment's reflection shows that there exist $w_1, w_2\in \hat{M}$ such that $w_1\geq 0$, $w_1$ is invertible in $LS(\hat{M})$ and 
$w=\begin{pmatrix}
w_1&w_2&0 \\
0&0&0 \\
0&0&0
\end{pmatrix}\in \M_3(\hat{M})$.
Put $x= w_2^*w_1^{-1}$. 
Since $ww^*=e_1^M$, we obtain $w_1^2 + w_2w_2^*=1_{\hat{M}}$.
Hence
\[
1+x^*x= 1+ w_1^{-1}w_2w_2^*w_1^{-1} = 1+ w_1^{-1} (1-w_1^2) w_1^{-1} = w_1^{-2}. 
\]
It follows by \eqref{eq:proj} that 
\[
P_{12}[x] = \begin{pmatrix}
(1+x^*x)^{-1}&(1+x^*x)^{-1}x^*&0 \\
x(1+x^*x)^{-1}&x(1+x^*x)^{-1}x^*&0 \\
0&0&0
\end{pmatrix}
= \begin{pmatrix}
w_1^2&w_1w_2&0 \\
w_2^*w_1&w_2^*w_2&0 \\
0&0&0
\end{pmatrix}= w^*w=Q.
\]
\end{proof}

\begin{corollary}
Let $k\in \{12, 13, 23\}$. 
There exists a bijection $\psi_{k}\colon LS(\hat{M})\to LS(\hat{N})$ such that $\Phi(P_k[x]) = P_k[\psi_k(x)]$.
Moreover, $x\in LS(\hat{M})$ is invertible in $LS(\hat{M})$ if and only if $\psi_{k}(x)$ is invertible in $LS(\hat{N})$
\end{corollary}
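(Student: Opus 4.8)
The plan is to deduce the corollary from Lemma~\ref{P_x} together with the two facts, both available at this stage, that $\Phi$ is a lattice isomorphism with $\Phi(e_i^M)=e_i^N$ for $i=1,2,3$ and that $\Phi$ (as well as $\Phi^{-1}$) preserves LS-orthogonality in both directions. First I would record the analogues of Lemma~\ref{P_x} for $k=13$ and $k=23$. Conjugating by the $^*$-automorphism of $\M_3(\hat M)$ induced by a permutation of the three coordinates carries $P_{12}[\cdot]$ to $P_{13}[\cdot]$ or $P_{23}[\cdot]$ and permutes $e_1^M,e_2^M,e_3^M$ accordingly; since a $^*$-automorphism preserves the lattice operations and LS-orthogonality (the latter, e.g., by Lemma~\ref{LS}), one obtains: $Q=P_{13}[x]$ for some $x\in LS(\hat M)$ iff $Q\vee e_3^M=e_1^M\vee e_3^M$ and $Q$ is LS-orthogonal to $e_3^M$; and $Q=P_{23}[x]$ for some $x$ iff $Q\vee e_3^M=e_2^M\vee e_3^M$ and $Q$ is LS-orthogonal to $e_3^M$.

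Thus, for each $k$, the subset $\{P_k[x]\mid x\in LS(\hat M)\}$ of $\P(M)$ is cut out by a condition formulated solely in terms of the lattice order, joins, the three projections $e_i^M$, and LS-orthogonality. Since $\Phi$ is a lattice isomorphism fixing each $e_i$ and preserving LS-orthogonality in both directions (as does $\Phi^{-1}$), $\Phi$ restricts to a bijection from $\{P_k[x]\mid x\in LS(\hat M)\}$ onto $\{P_k[y]\mid y\in LS(\hat N)\}$. On the other hand, the map $x\mapsto P_k[x]$ is injective, since two locally measurable operators with the same graph agree, and it is by definition surjective onto that set. Composing, we get a bijection $\psi_k\colon LS(\hat M)\to LS(\hat N)$ with $\Phi(P_k[x])=P_k[\psi_k(x)]$.

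It remains to characterize invertibility of $x$ in lattice-plus-LS-orthogonality terms. I would use that $x\in LS(\hat M)$ is invertible in $LS(\hat M)$ if and only if the graph $\{(\xi,x\xi,0)\}$ is also the flipped graph of an operator from the second coordinate to the first, that is, $P_{12}[x]=P_{21}[y]$ for some $y\in LS(\hat M)$, where $P_{21}[y]$ denotes the projection onto $\{(y\xi,\xi,0)\mid\xi\in\operatorname{dom}y\}$; in that case necessarily $y=x^{-1}$, by reparametrizing the graph and reading the resulting relations in the ring $LS(\hat M)$. Applying Lemma~\ref{P_x} with the roles of the first two coordinates exchanged, $P_{12}[x]$ lies in the family $\{P_{21}[y]\}$ exactly when $P_{12}[x]\vee e_1^M=e_1^M\vee e_2^M$ and $P_{12}[x]$ is LS-orthogonal to $e_1^M$. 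The LS-orthogonality condition alone does not suffice: for $\hat M=B(\ell^2)$ and $x$ a unilateral shift, $P_{12}[x]$ is LS-orthogonal to $e_1^M$ even though $x$ is not invertible, and it is the join condition that forces $x$ to have dense range. Analogously, for $k=13$ one has that $x$ is invertible iff $P_{13}[x]$ is LS-orthogonal to $e_1^M$ and $P_{13}[x]\vee e_1^M=e_1^M\vee e_3^M$, and for $k=23$ that $x$ is invertible iff $P_{23}[x]$ is LS-orthogonal to $e_2^M$ and $P_{23}[x]\vee e_2^M=e_2^M\vee e_3^M$. Each such condition is preserved by $\Phi$, so $x$ is invertible in $LS(\hat M)$ if and only if $\psi_k(x)$ is invertible in $LS(\hat N)$.

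The bookkeeping with coordinate permutations and with graphs of closed operators is routine. The step deserving genuine care is pinning down the invertibility criterion: one must identify the right condition (LS-orthogonality to the \emph{source} projection together with the appropriate join equality, rather than LS-orthogonality alone) and verify, via a Halmos two-projection computation as in Subsection~\ref{halmos}, that it is indeed equivalent to invertibility of $x$; the shift example above explains why the weaker condition fails.
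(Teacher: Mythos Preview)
Your proposal is correct and follows essentially the same route as the paper: both arguments use Lemma~\ref{P_x} (and its coordinate-permuted analogues) to characterize $\{P_k[x]\}$ by lattice and LS-orthogonality conditions preserved by $\Phi$, and both handle invertibility by introducing $P_{21}[\cdot]$ and observing that $P_{12}[x]=P_{21}[y]$ for some $y$ iff $x$ is invertible, which via Lemma~\ref{P_x} translates into the condition that $P_{12}[x]$ is LS-orthogonal to $e_1^M$ and $P_{12}[x]\vee e_1^M=e_1^M\vee e_2^M$. Your added shift example nicely explains why the join condition cannot be dropped, but otherwise the two proofs coincide.
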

\begin{proof}
Since $\Phi$ is a lattice isomorphism with $\Phi(e_1^M)= e_1^N$ and  $\Phi(e_2^M)= e_2^N$, the first half of the case $k=12$ follows from the preceding lemma.
For $x\in LS(\hat{M})$, let $P_{21}[x]\in \P(\M_3(\hat{M}))$ denote the projection onto 
\[
\left\{\begin{pmatrix}
x\xi \\
\xi \\
0
\end{pmatrix}\in K\oplus K\oplus K \,\middle|\, \xi\in \operatorname{dom}x\right\}, 
\]
thus
\[
P_{21}[x] = \begin{pmatrix}
x(1+x^*x)^{-1}x^*&x(1+x^*x)^{-1}&0 \\
(1+x^*x)^{-1}x^*&(1+x^*x)^{-1}&0 \\
0&0&0
\end{pmatrix}.
\]
It is easy to  see that, for $x, y\in LS(\hat{M})$, the equation $P_{12}[x]=P_{21}[y]$ holds if and only if $x$ is invertible in $LS(\hat{M})$ and $y=x^{-1}$. 
Therefore, Lemma \ref{P_x} implies that an operator $x\in LS(\hat{M})$ is invertible in $LS(\hat{M})$ if and only if $P_{12}[x]$ is LS-orthogonal to $e_1^M$ and $P_{12}[x]\vee e_1^M = e_1^M\vee e_2^M$. 
Thus $\psi_{12}$ preserves invertibility.
The other cases can be shown similarly. 
\end{proof}

In particular, the operators $\psi_{12}(1), \psi_{13}(1)$ are invertible in $LS(\hat{N})$. 
Consider the lattice automorphism $\phi$ of $\P(\M_3(\hat{N}))$ determined by 
$\phi(l(x)) = l(SxS^{-1})$, where $S= \begin{pmatrix}
1&0&0 \\
0&\psi_{12}(1)^{-1}&0 \\
0&0&\psi_{13}(1)^{-1}
\end{pmatrix}$.
We see that $\phi(e_i^N) = e_i^N$, $i=1, 2, 3$, and 
\[
\phi\circ \Phi(P_{12}[1_{\hat{M}}]) = P_{12}[1_{\hat{N}}],\,\, \phi\circ \Phi(P_{13}[1_{\hat{M}}]) = P_{13}[1_{\hat{N}}].
\]
Considering $\phi\circ \Phi$ instead of $\Phi$, we may assume
$\Phi(P_{12}[1_{\hat{M}}]) = P_{12}[1_{\hat{N}}]$ and $\Phi(P_{13}[1_{\hat{M}}]) = P_{13}[1_{\hat{N}}]$, 
or equivalently, $\psi_{12}(1)=\psi_{13}(1)=1$.

\begin{lemma}\label{multiplicative}
For any $x, y\in LS(\hat{M})$, we have 
\[
P_{13}[xy]=
(P_{23}[-x] \vee P_{12}[y])\wedge (e_1^M\vee e_3^M). 
\]
\end{lemma}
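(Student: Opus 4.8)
The plan is to compute both sides of the identity directly using the explicit matrix descriptions of the projections $P_{12}$, $P_{13}$, $P_{23}$ given in \eqref{eq:proj}, reducing everything to a question about ranges of closed operators on $K\oplus K\oplus K$. The cleanest route is to identify each projection $P_{ij}[z]$ with the graph-type subspace it was defined to be the range projection of, rather than manipulating the $3\times 3$ operator matrices. Concretely, $P_{13}[xy]$ is the projection onto $\{(\xi, 0, xy\xi) : \xi\in\operatorname{dom}(xy)\}$, while $P_{12}[y]$ is the projection onto $V_{12}:=\{(\eta, y\eta, 0):\eta\in\operatorname{dom}y\}$ and $P_{23}[-x]$ is the projection onto $V_{23}:=\{(0,\zeta,-x\zeta):\zeta\in\operatorname{dom}x\}$. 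First I would observe that the right-hand side is the projection onto $\overline{(V_{23}+V_{12})}\cap (e_1^M\vee e_3^M)H$, where $(e_1^M\vee e_3^M)H = K\oplus 0\oplus K$.

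Next I would analyze the closed subspace $W:=\overline{V_{23}+V_{12}}$. A vector in $V_{23}+V_{12}$ has the form $(\eta,\ y\eta+\zeta,\ -x\zeta)$ with $\eta\in\operatorname{dom}y$, $\zeta\in\operatorname{dom}x$. Intersecting the algebraic sum $V_{23}+V_{12}$ with $K\oplus 0\oplus K$ forces $\zeta = -y\eta$, giving vectors $(\eta, 0, xy\eta)$ with $\eta\in\operatorname{dom}y$ and $y\eta\in\operatorname{dom}x$, i.e.\ $\eta\in\operatorname{dom}(xy)$ before taking closures. The subtlety is that one must intersect the \emph{closure} $W$ with $K\oplus 0\oplus K$, not the algebraic sum, and then verify this equals the range of $P_{13}[xy]$, which is the closure of $\{(\eta,0,xy\eta):\eta\in\operatorname{dom}(xy)\}$ as a \emph{closed} operator $\overline{xy}$ (recall the convention in Subsection \ref{LSdef} that we suppress closure bars). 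Here I would use that $x,y\in LS(\hat M)$ so that $xy$ has a well-defined closure in $LS(\hat M)$, and that for locally measurable operators the relevant subspaces are ``large'' in a way that makes the closure operations compatible. A convenient way to make this rigorous is to pass to the bounded resolvents: writing $y$ and $x$ in polar/resolvent form as in the proof of Lemma \ref{id}, one reduces to an identity among bounded operators where the closure issues disappear, then removes the auxiliary bounded factors using the invertibility bookkeeping already set up (the operators $(1+x^*x)^{-1}$, etc., are invertible in $LS$).

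The key computation is thus: show $W\cap(K\oplus 0\oplus K)$ coincides with the graph of $\overline{xy}$ sitting in the first and third coordinates. The inclusion $\supseteq$ is the easy direction since $(\eta,0,xy\eta) = (\eta,y\eta,0) + (0,-y\eta,xy\eta)\in V_{12}+V_{23}$ whenever $\eta\in\operatorname{dom}(xy)$, and such $\eta$ are a core for $\overline{xy}$. For $\subseteq$, given $(\xi,0,\tau)\in W$, approximate by $(\eta_n,\ y\eta_n+\zeta_n,\ -x\zeta_n)$; the middle coordinate tends to $0$, so $\zeta_n + y\eta_n\to 0$, and one must extract that $\tau = \overline{xy}\,\xi$. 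This is where local measurability is used: the spectral-projection / central-sequence structure of $LS(\hat M)$ guarantees that the closed operators behave well under such limits. I expect this passage — controlling the closure of $V_{23}+V_{12}$ and identifying its slice cleanly with the graph of $\overline{xy}$ — to be the main obstacle; everything else is bookkeeping with \eqref{eq:proj} and the lattice operations $\vee,\wedge$. Once the subspace identity is established, applying the range-projection functor yields the stated equation $P_{13}[xy] = (P_{23}[-x]\vee P_{12}[y])\wedge(e_1^M\vee e_3^M)$.
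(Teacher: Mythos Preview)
Your setup and the easy inclusion ($\supseteq$) match the paper exactly: you correctly identify the relevant graph subspaces and observe that $(\eta,0,xy\eta)\in V_{12}+V_{23}$ for $\eta$ in a core of $\overline{xy}$. The gap is in the hard inclusion. Your direct limit argument does not work as stated: from $\eta_n\to\xi$, $y\eta_n+\zeta_n\to 0$, $-x\zeta_n\to\tau$ you cannot conclude $\tau=\overline{xy}\,\xi$. Writing $\zeta_n=-y\eta_n+\varepsilon_n$ with $\varepsilon_n\to 0$ gives $x\zeta_n=-xy\eta_n+x\varepsilon_n$, but $x$ is unbounded, so $\varepsilon_n\to 0$ says nothing about $x\varepsilon_n$ (indeed $\varepsilon_n$ need not lie in $\operatorname{dom}x$). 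Your fallback of ``passing to bounded resolvents'' is not a plan: multiplying by $(1+|x|)^{-1}$ changes the subspaces $V_{12},V_{23}$ in a way that does not obviously reduce to the bounded case, and the appeal to ``spectral-projection / central-sequence structure of $LS(\hat M)$'' is not an argument.

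The paper avoids this limit problem entirely by computing the orthogonal complement. Since $W=\overline{V_{12}+V_{23}}$, one has $W^{\perp}=V_{12}^{\perp}\cap V_{23}^{\perp}$, and a short calculation shows
\[
W^{\perp}=\left\{(-y^*x^*\zeta,\ x^*\zeta,\ \zeta):\zeta\in\operatorname{dom}x^*,\ x^*\zeta\in\operatorname{dom}y^*\right\}.
\]
A vector $(h_1,0,h_3)\in W\cap(K\oplus 0\oplus K)$ is then characterized by $\langle h_3,\zeta\rangle=\langle h_1,y^*x^*\zeta\rangle$ for all such $\zeta$. Because $x,y\in LS(\hat M)$, these $\zeta$ form a core of $y^*x^*\in LS(\hat M)$, so this says exactly $h_1\in\operatorname{dom}(y^*x^*)^*=\operatorname{dom}(\overline{xy})$ and $h_3=\overline{xy}\,h_1$. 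The passage to the orthogonal complement is the missing idea: it replaces an intractable closure-of-sum by an explicit intersection, and converts the limit problem into the clean algebraic fact $(y^*x^*)^*=\overline{xy}$ in $LS(\hat M)$.
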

\begin{proof}
Let $\hat{M}\subset B(K)$. 
We know that the range of $P_{23}[-x] \vee P_{12}[y]$ is the closure of 
\[
V:=\left\{\begin{pmatrix}
\eta \\
\xi+ y\eta \\
-x\xi
\end{pmatrix}\in K\oplus K\oplus K \,\middle|\, \xi\in \operatorname{dom}x,\,\, \eta\in \operatorname{dom}y\right\}. 
\]
In particular, we have
$\begin{pmatrix}
\eta \\
0 \\
xy\eta
\end{pmatrix}\in V$ for any $\eta\in \operatorname{dom} y$ with $y\eta\in \operatorname{dom} x$. 
Since the collection $\{\eta\in \operatorname{dom} y\mid y\eta\in \operatorname{dom} x\}$ is a core of the operator $xy\in LS(\hat{M})$, we have $P_{13}[xy]\leq (P_{23}[-x] \vee P_{12}[y])\wedge (e_1^M\vee e_3^M).$  

We claim that the orthogonal complement $V^{\perp}$ of $V$ is 
\[
\left\{\begin{pmatrix}
-y^*x^*\zeta  \\
x^*\zeta \\
\zeta
\end{pmatrix}\in K\oplus K\oplus K \,\middle|\, \zeta\in \operatorname{dom}x^*,\,\, x^*\zeta\in \operatorname{dom}y^*\right\}. 
\]
It is clear that any $\begin{pmatrix}
-y^*x^*\zeta  \\
x^*\zeta \\
\zeta
\end{pmatrix}$ as above is an element in $V^{\perp}$. 
If $\begin{pmatrix}
\zeta_1  \\
\zeta_2 \\
\zeta_3
\end{pmatrix}\in V^{\perp}$, then
\[
0 = \left\langle \begin{pmatrix}
\zeta_1  \\
\zeta_2 \\
\zeta_3
\end{pmatrix}, \begin{pmatrix}
0 \\
\xi \\
-x\xi
\end{pmatrix}\right\rangle = \langle\zeta_2, \xi\rangle - \langle\zeta_3, x\xi\rangle
\]
for any $\xi\in \operatorname{dom}x$, and hence we obtain $\zeta_3\in \operatorname{dom}x^*, \zeta_2= x^*\zeta_3$. 
By the equation
\[
0 = \left\langle \begin{pmatrix}
\zeta_1  \\
\zeta_2 \\
\zeta_3
\end{pmatrix}, \begin{pmatrix}
\eta \\
y\eta \\
0
\end{pmatrix}\right\rangle  
\]
for $\eta\in \operatorname{dom}y$, we obtain the claim. 
Let $\begin{pmatrix}
h_1  \\
h_2 \\
h_3
\end{pmatrix}$ belong to the range of $(P_{23}[-x] \vee P_{12}[y])\wedge (e_1^M\vee e_3^M)$, which is equal to the orthogonal complement of 
$V^{\perp}\cup \left\{\begin{pmatrix}
0\\
k\\
0
\end{pmatrix}\in K\oplus K\oplus K \,\middle|\, k\in K\right\}$. 
Then we have $h_2=0$ and
\[
0 = \left\langle \begin{pmatrix}
h_1  \\
0 \\
h_3
\end{pmatrix}, \begin{pmatrix}
-y^*x^*\zeta  \\
x^*\zeta \\
\zeta
\end{pmatrix}\right\rangle = -\langle h_1, y^*x^*\zeta\rangle + \langle h_3, \zeta\rangle
\]
For any $\zeta\in \operatorname{dom}x^*$ with $x^*\zeta\in \operatorname{dom}y^*$. 
We know that $\{\zeta\in \operatorname{dom}x^*\mid x^*\zeta\in \operatorname{dom}y^*\}$ is a core of the operator $y^*x^*\in LS(\hat{M})$. 
Thus we obtain $h_1\in \operatorname{dom}(y^*x^*)^* = \operatorname{dom}(xy)$  and $h_3 = (xy) h_1$ (here we view $xy$ as a closed operator in $LS(\hat{M})$). 
\end{proof}

\begin{lemma}
We have $\psi_{12}=\psi_{13}=\psi_{23}=: \psi$. 
Moreover, $\psi\colon LS(\hat{M}) \to LS(\hat{N})$ is multiplicative. 
\end{lemma}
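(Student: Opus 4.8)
The plan is to transport the identity of Lemma~\ref{multiplicative} through $\Phi$ and to recognize the outcome as an instance of the same identity inside $\M_3(\hat N)$. Since $\Phi$ is a lattice isomorphism with $\Phi(e_i^M)=e_i^N$ and $\Phi(P_k[x])=P_k[\psi_k(x)]$ for $k\in\{12,13,23\}$, applying $\Phi$ to
\[
P_{13}[xy]=(P_{23}[-x]\vee P_{12}[y])\wedge(e_1^M\vee e_3^M)
\]
gives $P_{13}[\psi_{13}(xy)]=(P_{23}[\psi_{23}(-x)]\vee P_{12}[\psi_{12}(y)])\wedge(e_1^N\vee e_3^N)$, and by Lemma~\ref{multiplicative} applied in $\M_3(\hat N)$ the right-hand side equals $P_{13}\big[-\psi_{23}(-x)\,\psi_{12}(y)\big]$. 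Because $\operatorname{ran}P_{13}[z]$ is the graph of the closed operator $z$, the map $z\mapsto P_{13}[z]$ is injective, so I obtain the basic relation
\[
\psi_{13}(xy)=-\psi_{23}(-x)\,\psi_{12}(y),\qquad x,y\in LS(\hat M).
\]

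From here the identifications are a substitution game in this one relation, using the normalization $\psi_{12}(1)=\psi_{13}(1)=1$. Putting $x=y=1$ gives $\psi_{23}(-1)=-1$; then $x=1$ gives $\psi_{13}=\psi_{12}$, and $y=1$ gives $\psi_{23}(x)=-\psi_{13}(-x)$ for every $x$. Substituting these two facts back into the basic relation collapses it to $\psi_{13}(xy)=\psi_{13}(x)\psi_{13}(y)$, so $\psi_{13}$ is multiplicative; in particular $c:=\psi_{13}(-1)$ satisfies $c^{2}=\psi_{13}(1)=1$.

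It remains to prove $c=-1$, which is the step that does not follow formally from the basic relation — a semigroup-type map could a priori send $-1$ to another square root of $1$ — so here I would bring in a genuinely lattice-theoretic input. In $\P(\M_3(\hat M))$ one has $P_{13}[1]\wedge P_{13}[-1]=0$, since the graphs of $1$ and of $-1$ (as operators from the first to the third coordinate) meet only in $0$. Applying $\Phi$, together with $\psi_{13}(1)=1$, yields $P_{13}[1]\wedge P_{13}[c]=0$ in $\P(\M_3(\hat N))$; as the meet of two projections is the projection onto the intersection of their ranges, this means the closed operator $c-1$ has trivial kernel. But $c^{2}=1$ makes $p:=(1-c)/2$ an idempotent of $LS(\hat N)$ with $c-1=-2p$, hence $\ker p=\{0\}$; and $p(1-p)=0$ forces $1-p$ to vanish on the (dense) domain of $p$, so $p=1$ and $c=-1$. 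Consequently $\psi_{23}(x)=-\psi_{13}(-x)=-\psi_{13}(-1)\psi_{13}(x)=-c\,\psi_{13}(x)=\psi_{13}(x)$, which gives $\psi_{12}=\psi_{13}=\psi_{23}=:\psi$, and $\psi$ is multiplicative since $\psi_{13}$ is.

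The only real obstacle is this last point: everything extracted from Lemma~\ref{multiplicative} is multiplicative in nature and would survive if $\Phi$ were merely a semigroup isomorphism, so one genuinely needs the lattice identity $P_{13}[1]\wedge P_{13}[-1]=0$ — equivalently, the fact that an idempotent of $LS(\hat N)$ with zero kernel must be the unit — to control $\psi_{13}$ on $-1$.
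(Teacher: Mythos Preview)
Your argument is correct and follows the paper's proof almost verbatim up to the point where you need $c:=\psi_{13}(-1)=-1$. For that last step the paper takes a shorter, purely algebraic route: since $\psi_{12}$ is a \emph{bijection} of $LS(\hat M)$ onto $LS(\hat N)$ (this was established in the corollary immediately preceding the lemma) and is multiplicative, one has $\psi_{12}(-1)y\neq y$ for every nonzero $y\in LS(\hat N)$ --- otherwise $\psi_{12}(-x)=\psi_{12}(-1)\psi_{12}(x)=\psi_{12}(x)$ for $x=\psi_{12}^{-1}(y)$, forcing $-x=x$ and hence $y=0$. Thus left multiplication by $c-1$ is injective on $LS(\hat N)$, and $(c-1)(c+1)=c^2-1=0$ immediately yields $c+1=0$.

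Your lattice route via $P_{13}[1]\wedge P_{13}[-1]=0$ is valid and reaches the same conclusion (in effect it shows $r(c-1)=1$, whence $(c-1)(c+1)=0$ together with the equivalence $xy=0\Leftrightarrow r(x)l(y)=0$ forces $l(c+1)=0$). However, your remark that ``one genuinely needs the lattice identity'' and that the step ``would survive if $\Phi$ were merely a semigroup isomorphism'' overlooks that the bijectivity of $\psi_{12}$ --- already in hand, and itself a consequence of $\Phi$ being a lattice isomorphism --- is exactly what the paper exploits. So the extra lattice input you introduce is unnecessary, though harmless.
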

\begin{proof}
Let $x, y\in LS(\hat{M})$. 
By the preceding lemma, we have 
\[
P_{13}[xy]=
(P_{23}[-x] \vee P_{12}[y])\wedge (e_1^M\vee e_3^M)
\]
and hence 
\[
\begin{split}
P_{13}[\psi_{13}(xy)]=\Phi(P_{13}[xy]) &=
\Phi\left((P_{23}[-x] \vee P_{12}[y])\wedge (e_1^M\vee e_3^M)\right)\\
&=
(\Phi(P_{23}[-x]) \vee \Phi(P_{12}[y]))\wedge (\Phi(e_1^M)\vee \Phi(e_3^M))\\
&=
(P_{23}[\psi_{23}(-x)] \vee P_{12}[\psi_{12}(y)])\wedge (e_1^N\vee e_3^N).
\end{split}
\]
It follows by the preceding lemma again (applied to $N$ instead of $M$) that 
\[
(P_{23}[\psi_{23}(-x)] \vee P_{12}[\psi_{12}(y)])\wedge (e_1^N\vee e_3^N) = P_{13}[-\psi_{23}(-x)\psi_{12}(y)].
\]
Thus we obtain $P_{13}[-\psi_{23}(-x)\psi_{12}(y)]= P_{13}[\psi_{13}(xy)]$, which implies $-\psi_{23}(-x)\psi_{12}(y) = \psi_{13}(xy)$.

In particular, 
putting $x=y=1$, we obtain $\psi_{23}(-1)= -1$. 
Putting $x=1$, we obtain $-\psi_{23}(-1)\psi_{12}(y) = \psi_{13}(y)$, hence $\psi_{12}(y) = \psi_{13}(y)$. 
Moreover, putting $y=1$, we obtain $-\psi_{23}(-x)\psi_{12}(1) = \psi_{13}(x)$, hence $-\psi_{23}(-x) =\psi_{13}(x)$. 
Thus $\psi_{12}(x) \psi_{12}(y) = -\psi_{23}(-x)\psi_{12}(y) = \psi_{13}(xy) =\psi_{12}(xy)$. 
Therefore, $\psi_{12}$ is multiplicative. 
It follows that $\psi_{12}(-1)$ is central in $LS(\hat{N})$, $\psi_{12}(-1)^2=1$ and $\psi_{12}(-1)y\neq y$ for any $y\neq 0$, and hence we obtain $\psi_{12}(-1)=-1$. 
We reach the equation $\psi_{13}=\psi_{12}=\psi_{23}$. 
\end{proof}

\begin{lemma}
The mapping $\psi$ is additive. 
\end{lemma}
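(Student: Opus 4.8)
The plan is to imitate the treatment of multiplicativity: I will isolate a purely lattice-theoretic description of a ``graph projection'' of a sum, in the same spirit as Lemma~\ref{multiplicative}, and then transport it through $\Phi$. Concretely, I claim that for all $a, b\in LS(\hat M)$ one has
\[
P_{13}[a+b] = \Big(\big((P_{13}[b]\vee P_{23}[1])\wedge (P_{12}[a]\vee e_3^M)\big)\vee e_2^M\Big)\wedge (e_1^M\vee e_3^M),
\]
and likewise with $\hat M$, $e_i^M$ replaced by $\hat N$, $e_i^N$.

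To prove this identity I would, exactly as in the proof of Lemma~\ref{multiplicative}, write $\hat M\subset B(K)$ and compute the ranges of the projections on the right-hand side as subspaces of $K\oplus K\oplus K$. The range of $P_{13}[b]\vee P_{23}[1]$ is $\{(\xi,\eta,b\xi+\eta)\}$; intersecting with the range $\{(\xi,a\xi,\zeta)\}$ of $P_{12}[a]\vee e_3^M$ forces $\eta=a\xi$ and $\zeta=(a+b)\xi$ with $\xi$ ranging over $\operatorname{dom}a\cap\operatorname{dom}b$; joining with $e_2^M$ frees the middle coordinate; and meeting with $e_1^M\vee e_3^M$ annihilates it again, leaving $\{(\xi,0,(a+b)\xi)\}$, which is the range of $P_{13}[a+b]$. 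The one point requiring genuine care --- and the step I expect to be the main obstacle --- is the bookkeeping of closures: one must check that each linear manifold arising along the way is actually closed (so that it really is the range of the named projection), and that $\operatorname{dom}a\cap\operatorname{dom}b$ is a core for the locally measurable operator $\overline{a+b}$, so that after passing to closures one recovers precisely $P_{13}[a+b]$ and not some smaller or larger projection. These are the same facts about $LS(\hat M)$ (closedness of graphs of locally measurable operators; cores for sums) that make $LS(\hat M)$ a $^*$-algebra and that were already used tacitly in Lemma~\ref{multiplicative}.

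Granting the identity, the conclusion is immediate. Applying the lattice isomorphism $\Phi$, and using $\Phi(e_i^M)=e_i^N$, $\Phi(P_{jk}[z])=P_{jk}[\psi(z)]$ for $z\in LS(\hat M)$ and $jk\in\{12,13,23\}$, together with $\psi(1_{\hat M})=1_{\hat N}$, the left-hand side becomes $P_{13}[\psi(a+b)]$ while the right-hand side becomes the analogous lattice expression over $\hat N$, which by the identity (now applied in $\hat N$) equals $P_{13}[\psi(a)+\psi(b)]$. Since $x\mapsto P_{13}[x]$ is injective, $\psi(a+b)=\psi(a)+\psi(b)$; combined with the preceding lemma this shows that $\psi\colon LS(\hat M)\to LS(\hat N)$ is a ring isomorphism.
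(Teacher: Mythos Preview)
Your proof is correct and follows the same strategy as the paper: express the graph projection of a sum by a lattice-theoretic formula in the $P_{jk}[\cdot]$'s and the $e_i^M$'s, then push it through $\Phi$. The only difference is cosmetic: the paper builds two auxiliary projections $f=(P_{12}[x]\vee e_3^M)\wedge(P_{13}[1]\vee e_2^M)$ and $g=(P_{12}[y]\vee P_{13}[1])\wedge(e_2^M\vee e_3^M)$, with ranges $\{(\xi,x\xi,\xi)\}$ and $\{(0,-y\eta,\eta)\}$, and shows $(f\vee g)\wedge(e_1^M\vee e_2^M)=P_{12}[x+y]$; you instead produce $P_{13}[a+b]$ by a single nested expression. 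Both formulas require exactly the same care with closures (that $\operatorname{dom}a\cap\operatorname{dom}b$ is a core for the locally measurable sum), which you correctly flag.
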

\begin{proof}
Let $x, y\in LS(\hat{M})$. 
Consider the projections 
\[
f= (P_{12}[x] \vee e_3^M) \wedge (P_{13}[1] \vee e_2^M)\quad\text{and}\quad 
g= (P_{12}[y] \vee P_{13}[1]) \wedge (e_2^M\vee e_3^M). 
\]
By an argument similar to that in the proof of Lemma \ref{multiplicative}, we can check the following:  
The range of $f$ is equal to 
\[
\left\{\begin{pmatrix}
\xi \\
x\xi\\
\xi
\end{pmatrix}\in K\oplus K\oplus K \,\middle|\, \xi\in \operatorname{dom}x\right\} 
\]
and the range of $g$ is equal to 
\[
\left\{\begin{pmatrix}
0 \\
-y\eta\\
\eta
\end{pmatrix}\in K\oplus K\oplus K \,\middle|\, \eta\in \operatorname{dom}y\right\}, 
\]
hence $(f\vee g) \wedge (e_1^M\vee e_2^M) = P_{12}[x+y]$. 
Apply $\Phi$ to both sides to obtain the desired conclusion. 
\end{proof}

We define a mapping $\Psi\colon LS(\M_3(\hat{M}))\to LS(\M_3(\hat{N}))$ by $\Psi((x_{ij})_{ij}) := (\psi(x_{ij}))_{ij}$, $x_{ij}\in LS(\hat{M})$, $i, j = 1, 2, 3$. 
The preceding lemmas implies that $\Psi$ is a ring isomorphism from $LS(\M_3(\hat{M}))$ onto $LS(\M_3(\hat{N}))$.

\begin{lemma}
We have $\Phi(l(x)) = l(\Psi(x))$ for any $x\in LS(\M_3(\hat{M}))$. 
\end{lemma}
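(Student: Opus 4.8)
The plan is to compare $\Phi$ with the map $\Phi':=l\circ\Psi|_{\P(M)}$, which is a lattice isomorphism $\P(M)\to\P(N)$ by Proposition~\ref{converse}, and to show $\Phi=\Phi'$; since $l(x)\in\P(M)$ for every $x\in LS(M)$, it then remains only to pass from projections back to general elements. For that passage I would first establish the auxiliary fact that $l(\Psi(x))=l(\Psi(l(x)))$ for all $x\in LS(M)$. The observation here is that, using the identity $za=0\iff r(z)l(a)=0$ recorded in Section~\ref{LSdef}, for $a,b\in LS(M)$ one has $l(a)=l(b)$ precisely when $\{z:za=0\}=\{z:zb=0\}$ (for the nontrivial implication, test with $z=l(a)^{\perp}$ to get $l(b)\le l(a)$, and symmetrically). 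Since $x$ and $l(x)$ plainly have the same left annihilator, and $\Psi$ is a bijective ring homomorphism, transporting this equality through $\Psi$ shows that $\Psi(x)$ and $\Psi(l(x))$ have the same left annihilator in $LS(N)$, hence the same left support. Granting this, once $\Phi=\Phi'$ on $\P(M)$ is known we get $\Phi(l(x))=\Phi'(l(x))=l(\Psi(l(x)))=l(\Psi(x))$, as desired.

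Next I would verify $\Phi=\Phi'$ on the projections already controlled in this section. For the $e_i^M$ this is immediate because $\Psi(e_i^M)=e_i^N$ is a projection. For $P_{12}[y]$, let $u\in LS(\M_3(\hat M))$ be the matrix with $1_{\hat M}$ in the $(1,1)$ entry, $y$ in the $(2,1)$ entry and zeros elsewhere; inspection of its range gives $l(u)=P_{12}[y]$, while $\Psi(u)$ is the analogous matrix over $\hat N$ (using $\psi(1)=1$), so $l(\Psi(u))=P_{12}[\psi(y)]$. Hence, by definition of $\Phi'$, the auxiliary fact applied to $u$, and the corollary above,
\[
\Phi'(P_{12}[y])=l(\Psi(l(u)))=l(\Psi(u))=P_{12}[\psi(y)]=\Phi(P_{12}[y]),
\]
and the same argument with the obvious elementary matrices gives $\Phi'(P_{13}[y])=\Phi(P_{13}[y])$ and $\Phi'(P_{23}[y])=\Phi(P_{23}[y])$. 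Since $\Phi$ and $\Phi'$ are order-isomorphisms between the complete lattices $\P(M)$ and $\P(N)$, they preserve arbitrary joins and meets, so it now suffices to show that $\{e_1^M,e_2^M,e_3^M\}\cup\{P_k[y]:k\in\{12,13,23\},\ y\in LS(\hat M)\}$ generates $\P(\M_3(\hat M))$ under finite meets, finite joins, and arbitrary joins.

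This generation statement is the heart of the matter, and I would prove it in four steps. (a) $P_{12}[y]\wedge e_1^M$ is the diagonal projection $\operatorname{diag}(r(y)^{\perp},0,0)$, so letting $y$ range over $\P(\hat M)$ and taking (orthogonal, hence joinable) sums yields every $\operatorname{diag}(p_1,p_2,p_3)$ with $p_i\in\P(\hat M)$. (b) For a closed densely defined $t$ affiliated with $\hat M$ that need not be locally measurable, writing $\chi_n:=\chi_{[0,n]}(\lvert t\rvert)\in\P(\hat M)$ and $t_n:=t\chi_n\in\hat M$, the projection onto $\overline{\operatorname{graph}(t|_{\chi_nK})}$ equals $P_{12}[t_n]\wedge\operatorname{diag}(\chi_n,1,1)$, which lies in the generated sublattice; its (increasing) join over $n$ is the projection onto $\overline{\operatorname{graph}(t)}$, and likewise in the other coordinate planes. (c) Arguing as in Lemma~\ref{multiplicative}, $(P_{12}[T_2]\vee e_3^M)\wedge(P_{13}[T_3]\vee e_2^M)$ is the projection onto the closure of $\{(\xi,T_2\xi,T_3\xi):\xi\in\operatorname{dom}T_2\cap\operatorname{dom}T_3\}$ for affiliated $T_2,T_3$ (reducing to the locally measurable case by the cutoffs of (b), and trimming off single-coordinate pieces using (a)), and symmetrically for graphs of operators from $K\oplus K$ into $K$. (d) Finally, an arbitrary $q\in\P(\M_3(\hat M))$, regarded as the range of $q$ inside $K\oplus K\oplus K$, is an orthogonal sum of its intersections with the coordinate subspaces and of graph-type pieces as in (b)--(c), via the generalized angle-operator (iterated Halmos) decomposition, so $q$ lies in the generated sublattice. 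I expect this last point — setting up the generalized graph decomposition cleanly, and in particular handling affiliated operators that are \emph{not} in $LS(\hat M)$ and matching every piece with the explicit projections of (a)--(c) — to be the main obstacle; the reduction to projections and the agreement on the $e_i^M$ and $P_k[y]$ are short by comparison.
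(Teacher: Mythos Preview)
Your reduction to showing $\Phi=\Phi'$ on projections, and your verification that $\Phi$ and $\Phi'$ agree on the $e_i^M$ and on every $P_k[y]$, match the paper's argument exactly (the paper phrases the reduction via Lemma~\ref{id} and Proposition~\ref{converse}, but the content is the same as your auxiliary fact). The divergence is in the extension step, and there your plan has a real problem.

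Your step (d) asserts that an arbitrary projection is an \emph{orthogonal} sum of its intersections with the coordinate subspaces and of graph-type pieces. This is false already for $\hat M=\C$: take $q'$ the projection onto $\{(a,b,a+b):a,b\in\C\}\subset\C^3$. Then $q'\wedge e_i^M=0$ for all $i$, while $r_{12}:=q'\wedge(e_1^M+e_2^M)$, $r_{13}$, $r_{23}$ are the rank-one projections onto $\C(1,-1,0)$, $\C(1,0,1)$, $\C(0,1,1)$, which are pairwise non-orthogonal; no orthogonal ``iterated Halmos'' splitting of $q'$ into pieces from (b) and (c) exists. The lattice-generation statement you are aiming at may well be true, but the mechanism you propose for it does not work, and a correct proof of full generation would be substantially harder than what is needed. (Step (b) also needs care: after stripping off the coordinate intersections, $q'\wedge(e_i^M+e_j^M)$ is in general the graph of a closed operator that is \emph{not} densely defined, hence not affiliated with $\hat M$, so the cutoff argument you wrote doesn't apply as stated.)

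The paper sidesteps all of this. Since $\Phi$ and $\Phi'$ are order isomorphisms of complete lattices, the set $\{p:\Phi(p)=\Phi'(p)\}$ is closed under arbitrary joins; hence, by a maximality/exhaustion argument (the paper says ``Zorn''), it suffices to show that every nonzero $p\in\P(\M_3(\hat M))$ has a nonzero subprojection $q\le p$ with $\Phi(q)=\Phi'(q)$. This is far weaker than generation and is proved by a short case analysis on which diagonal entry $p_{ii}$ is nonzero: if $p_{11}\ne 0$, take $e:=\chi_{(\lVert p_{11}\rVert/2,\lVert p_{11}\rVert]}(p_{11})$ and $x_1:=p_{11}^{-1}e\in\hat M$, and observe that the projection onto $\{(p_{11}\xi,p_{21}\xi,p_{31}\xi):\xi\in eK\}$ equals $P_{p_{21}x_1,\,p_{31}x_1}\wedge\bigl((P_{12}[e^{\perp}]\wedge e_1^M)\vee e_2^M\vee e_3^M\bigr)$, which is a nonzero subprojection of $p$ already expressible via your (a)--(c). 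The cases $p_{11}=0$, $p_{22}\ne 0$ and $p_{11}=p_{22}=0$ are handled analogously. So the missing idea in your approach is precisely this: you do not need to build every projection from the basic ones, only to find one nonzero subprojection of the right shape inside every nonzero projection.
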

\begin{proof}
We partly imitate Dye's argument in the proof of \cite[Lemma 7]{Dy}. 
By Lemma \ref{id}, it suffices to show that the lattice isomorphism $\Phi'\colon \P(\M_3(\hat{M})) \to \P(\M_3(\hat{N}))$ determined by $l(\Psi(x)) = \Phi'(l(x))$, $x\in LS(\M_3(\hat{M}))$, satisfies $\Phi=\Phi'$. 
For $x\in LS(\hat{M})$, we have
\[
\begin{split}
\Phi(P_{12}[x]) = P_{12}[\psi(x)] &= l\begin{pmatrix}
1&0&0 \\
\psi(x)&0&0 \\
0&0&0
\end{pmatrix}\\
&= l\left(\Psi \begin{pmatrix}
1&0&0 \\
x&0&0 \\
0&0&0
\end{pmatrix}\right)
= \Phi'\left(l\begin{pmatrix}
1&0&0 \\
x&0&0 \\
0&0&0
\end{pmatrix}\right)= \Phi'(P_{12}[x]). 
\end{split}
\]
Similarly, we see that $\Phi(p)=\Phi'(p)$ for any $p\in \{P_k[x] \mid x\in LS(\hat{M}),\,\, k=12,\,23,\,13\}$. 

Let $x_2, x_3\in LS(\hat{M})$. 
We consider the projection $P_{x_2, x_3}\in \P(\M_3(\hat{M}))$ onto the closed  subspace
\[
\left\{\begin{pmatrix}
\xi \\
x_2\xi \\
x_3\xi
\end{pmatrix}\in K\oplus K\oplus K \,\middle|\, \xi\in \operatorname{dom}x_2\cap \operatorname{dom}x_3\right\}. 
\]
It is not difficult to see that this projection is equal to $(P_{12}[x_2] \vee e_3^M)\wedge (P_{13}[x_3] \vee e_2^M)$.
It follows that $\Phi(P_{x_2, x_3}) = \Phi'(P_{x_2, x_3})$. 

Consider an arbitrary nonzero projection $p = (p_{i, j})_{1\leq i, j\leq 3}\in \P(\M_3(\hat{M}))$. 
By Zorn's lemma, to show that $\Phi(p) = \Phi'(p)$, it suffices to find a nonzero subprojection $(p\geq)\,\, q\in \P(\M_3(\hat{M}))$ such that $\Phi(q) = \Phi'(q)$. 
Note that $p_{ii} = \sum_{1\leq k\leq 3} p_{ik} p_{ik}^*$, hence we see that $p_{ii}\neq 0$ for some $i\in \{1, 2, 3\}$. 

If $p_{11}\neq 0$, put $e:= \chi_{(\lVert p_{11}\rVert/2, \lVert p_{11}\rVert]}(p_{11})\in \P(\hat{M})\setminus \{0\}$ and $x_1:= p_{11}^{-1}e\in \hat{M}$.  
It follows that the projection $q\in \P(\M_3(\hat{M}))$ onto the subspace 
\[
\left\{\begin{pmatrix}
p_{11}\xi \\
p_{21}\xi \\
p_{31}\xi
\end{pmatrix}\in K\oplus K\oplus K \,\middle|\, \xi\in eK\right\} = \left\{\begin{pmatrix}
\eta \\
p_{21}x_1\eta \\
p_{31}x_1\eta
\end{pmatrix}\in K\oplus K\oplus K \,\middle|\, \eta\in eK\right\}
\]
is a nonzero subprojection of $p$. 
Since $q = P_{p_{21}x_1, p_{31}x_1} \wedge ((P_{12}[e^{\perp}]\wedge e_1^M)\vee e_2^M \vee e_3^M)$, we obtain $\Phi(q) = \Phi'(q)$. 

If $p_{11}= 0$ and $p_{22}\neq 0$, we have $p\leq e_2^M\vee e_3^M$. 
Then a similar discussion applies. 
If $p_{11}=p_{22} = 0$, then $p_{33}\in \P(\hat{M})$. 
Use the equation 
$(P_{13}[1]\vee P_{13}[p_{33}^{\perp}])\wedge e_3^M= p$, which can be verified easily, to obtain the desired conclusion. 
\end{proof}

Therefore, the proof of Theorem A is complete in the case $M$ has order $3$.
The same discussion with a slight modification is valid in any case $M$ has order $n$ with $3\leq n<\infty$. 
We know that a projection lattice isomorphism preserves central projections because a projection $p$ in a von Neumann algebra $M$ is central if and only if $\{q\in \P(M)\mid p\vee q=1, p\wedge q=0\} =\{p^{\perp}\}$.
Since every von Neumann algebra without type I$_1$ and I$_2$ direct summands decomposes into the direct sum of algebras of order $3\leq n<\infty$, now it easy to complete the proof of Theorem A in the general case.\medskip

In what follows, let us give a proof of Theorem \ref{dye} by Dye (in the case the von Neumann algebras are without commutative direct summands) as an application of Theorem A. 
The proof below is partly based on Feldman's argument in \cite[Proof of Theorem 3]{Fe}.

Let $M$ and $N$ be von Neumann algebra without type I$_1$ and I$_2$ direct summands and suppose that $\Phi\colon \P(M)\to\P(N)$ is a lattice isomorphism. 
Suppose further that we have $pq = 0$ if and only if $\Phi(p)\Phi(q)=0$ for any pair $p, q\in \P(M)$. 
By Theorem A, there exists a unique ring isomorphism $\Psi\colon LS(M)\to LS(N)$ such that $\Phi(l(x))=l(\Psi(x))$ for any $x\in LS(M)$. 

Then we have $\Psi(p)=\Phi(p)\in \P(N)$ for every $p\in\P(M)$. 
Indeed, since $p^2=p$ and $pp^{\perp}=0$, we have $\Psi(p)^2=\Psi(p)$ and $\Psi(p)\Psi(p^{\perp}) = 0$. 
Thus we have $r(\Psi(p))l(\Psi(p^{\perp})) = 0$.
Our assumption implies $l(\Psi(p^{\perp})) =\Phi(p^{\perp})=\Phi(p)^{\perp}= l(\Psi(p))^{\perp}$, thus we obtain $r(\Psi(p))\leq l(\Psi(p))$. 
By the equation $(l(\Psi(p))-\Psi(p))\Psi(p)=0$, we obtain $0=(l(\Psi(p))-\Psi(p))l(\Psi(p)) = l(\Psi(p))-\Psi(p)$.
Hence $\Psi(p)=l(\Psi(p)) =\Phi(p)\in \P(N)$.

Consider the ring automorphism $x\mapsto \Psi^{-1}(\Psi(x^*)^*)$ of $LS(M)$. 
This fixes every projection, hence Lemma \ref{id} implies that $x= \Psi^{-1}(\Psi(x^*)^*)$, or equivalently, $\Psi(x)^*=\Psi(x^*)$ for each $x\in LS(M)$.
It follows that $\Psi$ maps the self-adjoint part of $LS(M)$ onto that of $LS(N)$. Since $\Psi$ preserves squares, $\Psi$ restricted to self-adjoint parts preserves order in both directions. 
Since $\Psi(1)=1$, $\Psi$ restricts to a real $^*$-isomorphism from $M$ onto $N$ and extends $\Phi$, which is the desired conclusion.

\section{Ring isomorphisms of locally measurable operator algebras}\label{extra}
By the preceding section, lattice isomorphisms between projection lattices are in one-to-one correspondence with ring isomorphisms between the algebras of locally measurable operators. 
Hence the following question is well motivated.
\begin{q}
Let $M$, $N$ be von Neumann algebras. 
What is the general form of ring isomorphisms from $LS(M)$ onto $LS(N)$?
\end{q}

\begin{lemma}
Let $M, N$ be general von Neumann algebras. 
Let 
\[
\begin{split}
M&=\left(\bigoplus_{n\geq 1}M_{\mathrm{I}_n}\right) \oplus M_{\mathrm{I}_{\infty}} \oplus M_{\mathrm{II}_1} \oplus M_{\mathrm{II}_{\infty}} \oplus M_{\mathrm{III}},\\
N&=\left(\bigoplus_{n\geq 1}N_{\mathrm{I}_n}\right) \oplus N_{\mathrm{I}_{\infty}} \oplus N_{\mathrm{II}_1} \oplus N_{\mathrm{II}_{\infty}} \oplus N_{\mathrm{III}}
\end{split}
\]
be the type decompositions, where $M_j$, $N_j$ are von Neumann algebras of type $j$. 
Suppose that $\Psi\colon LS(M)\to LS(N)$ is a ring isomorphism.
Then there exist ring isomorphisms $\psi_j\colon LS(M_j)\to LS(N_j)$ such that $\Psi(x) =\psi_j(x)$ for any $x\in LS(M_j)\,\,(\subset LS(M))$.
\end{lemma}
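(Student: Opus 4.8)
The plan is to show that $\Psi$ respects the type decomposition, which reduces to showing that $\Psi$ maps the central projection $z_j\in \P(\Z(M))$ cutting out $M_j$ to the corresponding central projection $z_j'\in \P(\Z(N))$ cutting out $N_j$; once this is known, $\Psi$ restricts to a ring isomorphism $LS(Mz_j)\to LS(Nz_j')$, and $LS(Mz_j)=LS(M_j)$ by definition of the type decomposition. The key point is that $\Psi$ sends idempotents to idempotents and sends central idempotents to central idempotents (since $\Psi$ is a ring isomorphism it preserves the center of the ring), and the center of $LS(M)$ is exactly $LS(\Z(M))$; hence $\Psi$ induces a ring isomorphism between the Boolean algebras of central projections of $M$ and $N$.

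First I would recall that a projection $p\in \P(M)$ is central in $M$ if and only if $p$ is central in $LS(M)$, and that $\Psi$, being a ring isomorphism, maps the set of central idempotents of $LS(M)$ bijectively onto that of $LS(N)$; moreover $\Psi$ restricted to these Boolean algebras is a Boolean algebra isomorphism (it preserves $0$, $1$, complementation $p\mapsto 1-p$, and products $p,q\mapsto pq$, hence meets, and therefore joins). Note $\Psi(1)=1$ because $1$ is the unique nonzero central idempotent $e$ with $ex=x$ for all $x$, a purely ring-theoretic description preserved by $\Psi$. Thus there is an induced central-projection lattice isomorphism $\Phi_0\colon \P(\Z(M))\to \P(\Z(N))$ with $\Phi_0(p)=\Psi(p)$ for central $p$.

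Next I would observe that each of the summands in the type decomposition is characterized by a property of the Boolean algebra of central subprojections together with the Murray--von Neumann comparison of projections, but more efficiently: the point is that $\Psi$, being a ring isomorphism, also induces (via left supports) the lattice isomorphism $\Phi\colon \P(M)\to\P(N)$ of Proposition \ref{converse}, and lattice isomorphisms preserve the type classification of the direct summands. Concretely, a central projection $z$ has $Mz$ of type $\mathrm{I}_n$ (resp.\ $\mathrm{II}_1$, resp.\ properly infinite, resp.\ of type III) precisely when an order/finiteness/abelian-projection condition holds for the projections below $z$, and all the relevant conditions ($p\sim q$, $p$ finite, $p$ abelian, having order $n$) are expressible purely in the lattice $\P(M)$ together with the relation $\sim$; but $\sim$ itself is lattice-theoretic by Subsection \ref{halmos} (e.g.\ $p\perp q$, $p\vee q=z$, $p\wedge q=0$ forces $z-p\sim q$). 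Hence $\Phi$ carries the central projection $z_j$ of $M_j$ to the central projection $z_j'$ of $N_j$ for each type $j$. Since for central $p$ we have $\Phi(p)=l(\Psi(p))=\Psi(p)$ (as $\Psi(p)$ is a central idempotent, so equals its own left support), we conclude $\Psi(z_j)=z_j'$.

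Finally, with $\Psi(z_j)=z_j'$ in hand, set $\psi_j:=\Psi|_{LS(M_j)}$. Since $z_j$ is a central idempotent, $LS(Mz_j)=LS(M)z_j$ is a two-sided ideal (indeed a ring direct summand) of $LS(M)$, equal to $LS(M_j)$; likewise on the $N$ side. A ring isomorphism carrying the identity $z_j$ of the summand $LS(M_j)$ to the identity $z_j'$ of $LS(N_j)$ restricts to a ring isomorphism $\psi_j\colon LS(M_j)\to LS(N_j)$, and $\Psi(x)=\psi_j(x)$ for $x\in LS(M_j)$ by construction. The main obstacle is making the second step airtight, i.e.\ verifying carefully that the finiteness/type conditions on central summands are genuinely invariant under an abstract lattice isomorphism; this is where one must lean on the translation of $\sim$ into lattice language from Subsection \ref{halmos} together with the standard characterizations of the types, rather than on any Hilbert-space structure.
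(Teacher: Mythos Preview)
Your outline matches the paper's proof: $\Psi$ preserves central projections, induces the lattice isomorphism $\Phi$ of Proposition \ref{converse}, and one then argues that the type of a central summand is a lattice invariant, so that $\Psi(z_j)=z_j'$ and the restriction gives $\psi_j$.

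The one point to tighten is the assertion that ``$\sim$ itself is lattice-theoretic.'' This is not established anywhere and is stronger than what you need; your parenthetical example is also off (with $p\perp q$ assumed, $p\vee q=z$ already gives $z-p=q$, so the conclusion is trivial --- drop the orthogonality hypothesis). What Subsection \ref{halmos} actually yields is only the one-directional fact that $p\vee q=r$ and $p\wedge q=0$ force $r-p\sim q$, i.e.\ perspectivity implies equivalence. The paper uses just this to show that $\Phi$ preserves abelianness of projections (immediate from the lattice) and finiteness: if $p$ is infinite, choose mutually orthogonal nonzero $p_1,p_2,p_3\leq p$ with $p_1\sim p_2\sim p_3\sim p_1+p_2$; since orthogonal equivalent projections are perspective (via the ``diagonal'' projection, exactly as in Lemma \ref{sim}), one obtains $\Phi(p_1)\sim\Phi(p_3)\sim\Phi(p_1+p_2)$, so $\Phi(p)$ is infinite. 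Preservation of abelian and finite projections (together with central projections) suffices to separate all the types by standard structure theory, so no general lattice characterization of $\sim$ is needed.
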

\begin{proof}
It is easy to see that $\Psi$ maps the collection of central projections in $M$ onto that in $N$. 
Hence it suffices to show: If $M$, $N$ are of type $j, k\in \{\mathrm{I}_n\mid n\geq 1\}\cup \{\mathrm{I}_{\infty}, \mathrm{II}_1, \mathrm{II}_{\infty}, \mathrm{III}\}$, respectively, then $j=k$. 
We consider the lattice isomorphism $\Phi\colon \P(M)\to\P(N)$ as in Proposition \ref{converse}. 
It is easy to see that a projection $p\in \P(M)$ is abelian (namely, $pMp$ is an abelian von Neumann algebra) if and only if $\Phi(p)$ is abelian. 
Moreover, a projection $p\in \P(M)$ is finite if and only if $\Phi(p)\in \P(N)$ is finite. 
Indeed, if $p$ is not finite, then there exist mutually orthogonal nonzero subprojections $p_1, p_2, p_3$ of $p$ such that $p_1\sim p_2\sim p_3\sim p_1+p_2$. 
The same argument as in the proof of Lemma \ref{sim} implies $\Phi(p_1)\sim\Phi(p_3)\sim \Phi(p_1+p_2)$, which shows that $\Phi(p)$ is not finite. 
Similarly, if $\Phi(p)$ is not finite, then $p$ is not finite. 
The rest of the proof is a standard argument of von Neumann algebra theory, and we omit the details.
See e.g.\ \cite[Chapter 6]{KR}.
\end{proof}

Therefore, Question reduces to the case both $M$ and $N$ are of type $j$, $j\in \{\mathrm{I}_n\mid n\geq 1\}\cup \{\mathrm{I}_{\infty}, \mathrm{II}_1, \mathrm{II}_{\infty}, \mathrm{III}\}$. 

We first consider Question in the case $M, N$ are von Neumann algebras of type I$_n$. 
Suppose that $LS(M)$ is ring isomorphic to $LS(N)$. 
Since the central projection lattices of $M$ and $N$ are lattice isomorphic, we see that the center of $M$ is $^*$-isomorphic to that of $N$. 
Hence there exists a commutative von Neumann algebra $A$ such that $M\cong N\cong \M_n (A)$. 
Therefore, it suffices to think about ring automorphisms of $LS(\M_n(A))$, which can be identified with the collection of all $n \times n$ matrices with entries in $LS(A)$. 
Note that $A$ can be identified with the algebra $L^{\infty}(\mu)$ of all complex-valued essentially bounded measurable functions (modulo almost-everywhere equivalence) for some measure $\mu$. 
Then $LS(A)$ corresponds to $L^0(\mu)$, which denotes the collection of all complex-valued measurable functions.
Remark that any ring automorphism $\psi$ of $LS(A)$ determines a ring automorphism $\psi'$ of $LS(\M_n(A))$ by the formula $\psi'((x_{ij})) = (\psi(x_{ij}))_{ij}$. 
The following proposition slightly generalizes (but can be shown by exactly the same argument as in) \cite[Theorem 3.3]{AAKD} by Albeverio, Ayupov, Kudaybergenov and  Djumamuratov. 
\begin{proposition}\label{I}
Let $n\geq 1$ be an integer and $A$ be a commutative von Neumann algebra. 
Suppose that $\Psi$ is a ring automorphism of $LS(\M_n(A))$.
Then there exist a ring automorphism $\psi\colon LS(A)\to LS(A)$ and an invertible element $y$ in $LS(\M_n(A))$ such that $\Psi(x)=y\psi'(x)y^{-1}$, $x\in LS(\M_n(A))$. 
\end{proposition}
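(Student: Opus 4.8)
The plan is to reduce the statement to the known structure of ring automorphisms of $LS(A)$ by a matrix-unit argument. First I would record the obvious building block: any ring automorphism $\psi$ of $LS(A)$ lifts entrywise to a ring automorphism $\psi'$ of $LS(\M_n(A))$, and conjugation by an invertible $y\in LS(\M_n(A))$ is also a ring automorphism; so the content of the proposition is that \emph{every} ring automorphism is a composition of these two types. The case $n=1$ is vacuous, so assume $n\geq 2$.

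The key step is to normalize $\Psi$ so that it fixes the standard system of matrix units $\{e_{ij}\}_{1\leq i,j\leq n}$, where $e_{ii}$ are the diagonal minimal central-free projections summing to $1$ and $e_{ij}$ are the corresponding partial isometries. Set $f_i:=\Psi(e_{ii})$. Since the $e_{ii}$ are mutually orthogonal idempotents with $\sum_i e_{ii}=1$, the $f_i$ are mutually orthogonal idempotents (not a priori self-adjoint, since $\Psi$ need not be a $^*$-map) with $\sum_i f_i=1$; moreover $\Psi(e_{1i})\Psi(e_{i1})=f_1$ and $\Psi(e_{i1})\Psi(e_{1i})=f_i$, so the $f_i$ are mutually "algebraically equivalent" idempotents. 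A standard argument then produces an invertible $y_0\in LS(\M_n(A))$ with $y_0 f_i y_0^{-1}=e_{ii}$ and $y_0\Psi(e_{ij})y_0^{-1}=e_{ij}$ for all $i,j$: one first replaces each idempotent $f_i$ by the genuine projection with the same range and kernel closure using that $LS(\M_n(A))$ is a $^*$-algebra in which every idempotent is similar to a projection, then transports an orthogonal family of equivalent projections onto the standard one by an invertible element. Replacing $\Psi$ by $x\mapsto y_0\Psi(x)y_0^{-1}$, we may assume $\Psi(e_{ij})=e_{ij}$ for all $i,j$.

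With the matrix units fixed, for each $a\in LS(A)$ write $\Psi(a e_{11})=:\psi(a)e_{11}$; this is forced because $e_{11}\Psi(ae_{11})e_{11}=\Psi(e_{11}ae_{11}e_{11})=\Psi(ae_{11})$ shows $\Psi(ae_{11})$ lies in $e_{11}LS(\M_n(A))e_{11}=LS(A)e_{11}$, and additivity and multiplicativity of $\Psi$ make $\psi\colon LS(A)\to LS(A)$ a ring homomorphism; applying the same reasoning to $\Psi^{-1}$ gives that $\psi$ is a ring automorphism. Then for a general matrix $x=\sum_{i,j} a_{ij}e_{ij}$ one computes
\[
\Psi(a_{ij}e_{ij})=\Psi(e_{i1})\Psi(a_{ij}e_{11})\Psi(e_{1j})=e_{i1}\,\psi(a_{ij})e_{11}\,e_{1j}=\psi(a_{ij})e_{ij},
\]
so by additivity $\Psi(x)=\sum_{i,j}\psi(a_{ij})e_{ij}=\psi'(x)$. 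Undoing the normalization yields $\Psi(x)=y\,\psi'(x)\,y^{-1}$ with $y:=y_0^{-1}$, as claimed.

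The main obstacle is the normalization step: showing that the abstract orthogonal family $(f_i)$ of mutually algebraically equivalent idempotents in $LS(\M_n(A))$ can be simultaneously conjugated to the standard matrix units. This requires (i) knowing that in $LS(\M_n(A))$ every idempotent $f$ is similar, via an invertible element, to the projection $l(f)$ onto its range — a fact that holds because $LS$ of a von Neumann algebra is a $^*$-algebra closed under the relevant analytic operations and one can write down the similarity $1+(l(f)-r(f))$-type formula explicitly — and (ii) the Murray--von Neumann-type fact that orthogonal families of equivalent projections in $LS(\M_n(A))$ summing to $1$ are all conjugate; this last point is where one invokes that $M\cong N\cong \M_n(A)$ so the ambient algebra genuinely has $n$ equivalent orthogonal pieces. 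These are exactly the computations carried out (for the case $A=L^\infty(\mu)$) in \cite[Theorem 3.3]{AAKD}, and nothing in the argument uses commutativity of $A$ beyond what is needed to identify $e_{11}LS(\M_n(A))e_{11}$ with $LS(A)$, so the same proof applies verbatim.
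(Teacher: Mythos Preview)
Your argument is correct, but it takes a different route from the paper's. The paper's proof is three lines: since any ring automorphism preserves the center, $\Psi$ restricts to a ring automorphism $\psi$ of $\Z(LS(\M_n(A)))\cong LS(A)$; then $\Psi\circ(\psi')^{-1}$ fixes every central element, and \cite[Theorem 3.1]{AAKD} (center-fixing automorphisms of $LS(\M_n(A))$ are inner) finishes the job. You instead extract $\psi$ from the corner $e_{11}LS(\M_n(A))e_{11}$ after conjugating the image matrix-unit system $\{\Psi(e_{ij})\}$ back to the standard one; this normalization is precisely the content of the result the paper cites, so you are essentially re-deriving \cite[Theorem 3.1]{AAKD} rather than quoting it. The steps you sketch (inductively conjugating the orthogonal idempotents $f_i$ to genuine projections inside the finite $^*$-regular ring $LS(\M_n(A))$, recognizing the resulting projections as Murray--von Neumann equivalent to the $e_{ii}$ via the center-valued trace on $\M_n(A)$, and a final diagonal conjugation to align the off-diagonal units) all go through, so there is no gap. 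Note also that your corner-based $\psi$ coincides with the paper's center-based $\psi$: conjugation by $y_0$ is the identity on the center, so $\tilde\Psi(a e_{11})=\tilde\Psi(a\cdot 1)\,e_{11}=\psi(a)e_{11}$ either way. The paper's approach buys brevity and a clean separation of the ``center part'' from the ``inner part''; yours is more self-contained and would be the natural way to proceed if one did not want to invoke \cite{AAKD} as a black box.
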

\begin{proof}
Note that $\Psi$ restricts to a ring automorphism $\psi$ of the center of $LS(\M_n(A))$, which is canonically isomorphic to $LS(A)$. 
Then $\Psi\circ {\psi'}^{-1}$ fixes every element in the center of $LS(\M_n(A))$.
We may apply \cite[Theorem 3.1]{AAKD} to obtain the desired conclusion. 
\end{proof}

There exist highly nontrivial examples of ring automorphisms of $LS(A) = L^0(\mu)$ for a commutative von Neumann algebra $A$.
For example, consider the case $A=\C=LS(A)$. 
There are many ring automorphisms of $\C$ that are far from real-linear. 
Consider the case $\mu$ is an atomless measure. 
It is known \cite[{(1)$\Leftrightarrow$(6) of Theorem 3.4}]{Ku} (see also \cite[Remark 6.3]{Ku}) that there exists a (complex-linear) algebra automorphism $\psi$ of $L^0(\mu)$ such that $\psi(p)=p$ for any $p\in \P(A)$ but $\psi\neq\operatorname{id}_{L^0(\mu)}$. 
It seems that these examples are beyond the scope of the theory of operator algebras. 

In contrast, we may give a purely operator algebraic solution to Question for type I$_{\infty}$ or III in the following manner.
This improves \cite[Theorem 3.8]{AAKD}, in which algebra isomorphisms of the case of type I$_\infty$ were considered.
\begin{thmb}
Let $M, N$ be von Neumann algebras of type I$_{\infty}$ or III. 
If $\Psi\colon LS(M)\to LS(N)$ is a ring isomorphism, then there exist a real $^*$-isomorphism $\psi\colon M\to N$ (which extends to a real $^*$-isomorphism from $LS(M)$ onto $LS(N)$) and an invertible element $y\in LS(N)$ such that  $\Psi(x)=y\psi(x)y^{-1}$, $x\in LS(M)$.
\end{thmb}
\begin{proof}
Beware of the fact that $\Psi$ restricts to a lattice isomorphism between the central projection lattices of $M$ and $N$. 
We first prove:
\begin{itemize}
\item[\underline{Claim}] 
There exists an operator $a\in LS(\Z(N))_+$ such that $\vertiii{\Psi(x)}\leq a$ for any $x\in M\,\,(\subset LS(M))$ with $\lVert x\rVert\leq 1$. 
\end{itemize}
Assume that this claim does not hold. 
We will obtain a contradiction in Step 4.\smallskip

\noindent
\underline{Step 1} 
We prove that there exists a central projection $e$ in $M$ such that for any $n\geq 1$ there exists some $x\in M$ with $\lVert x\rVert \leq 1$ and $\vertiii{\Psi(x)}\geq n\Psi(e)$. 

Assume for a while that the center $\Z(M)$ of $M$ admits a faithful normal state $\tau\colon \Z(M)\to \C$. 
For each positive integer $n$, consider the collection 
\[
{E}_n := \{ e\in \P(\Z(M))\mid \text{ there exists }x\in M \text{ with } \lVert x\rVert \leq 1 \text{ and } \vertiii{\Psi(x)}\geq n\Psi(e)\}. 
\]
Suppose that $e, f$ belong to this collection. 
Take $x, y \in M$ such that $\lVert x\rVert,  \lVert y\rVert\leq 1$ and $\vertiii{\Psi(x)}\geq n\Psi(e)$, $\vertiii{\Psi(y)}\geq n\Psi(f)$. 
Then the element $x':=xe+ye^{\perp}$ satisfies $\lVert x'\rVert\leq 1$ and 
\[
\begin{split}
\vertiii{\Psi(x')} &= \vertiii{\Psi(xe+ye^{\perp})} \\
&= \vertiii{\Psi(x)\Psi(e)+\Psi(y)\Psi(e)^{\perp}}\\
&=\vertiii{\Psi(x)}\Psi(e) + \vertiii{\Psi(y)}\Psi(e)^{\perp}\\
&\geq n\Psi(e) + n\Psi(f)\Psi(e)^{\perp}\\
&= n\Psi(e)\vee\Psi(f) = n\Psi(e\vee f). 
\end{split}
\]
Hence we have $e\vee f\in {E}_n$, which implies that ${E}_n$ is upward directed. 
Put $c_n:= \sup\{\tau(e)\mid e\in {E}_n\}$. 
We may take an increasing sequence $\{e^{(k)}\}\subset {E}_n$ such that $\tau(e^{(k)})\to c_n$ as $k\to\infty$. 
For each $k$ take $x^{(k)} \in M$ such that $\lVert x^{(k)}\rVert\leq 1$ and $\vertiii{\Psi(x^{(k)})}\geq n\Psi(e^{(k)})$. 
Some calculations show that the element 
\[
x'':= x^{(1)}e^{(1)} + \sum_{k\geq 2} x^{(k)}(e^{(k)}-e^{(k-1)})\in M
\]
satisfies $\lVert x''\rVert\leq 1$ and $\vertiii{\Psi(x'')}\geq n\Psi(e^{(k)})$ for every $k$.
This implies that for the projection $e_n := \bigvee{E}_n\in \P(\Z(M))$ there exists  $x_n\in \P(\Z(M))$ such that $\lVert x_n\rVert\leq 1$ and $\vertiii{\Psi(x_n)}\geq n\Psi(e_n)$. 

Clearly, $\{e_n\}$ is a decreasing sequence. 
Assume that $e_n\to 0$ as $n\to \infty$, then the element $a=\Psi(1+\sum_{n\geq 1}e_n)\in LS(\Z(N))_+$ satisfies the property of Claim, which contradicts our assumption.
Hence we have $e_n\to e\in \P(\Z(M))\setminus\{0\}$ as $n\to \infty$, and $e$ satisfies the desired property. 
Since every von Neumann algebra can be decomposed into the direct sum of von Neumann algebras whose centers admit faithful normal states, the same holds for arbitrary $M$ and $N$.\smallskip

Considering the restriction of $\Psi$ to a ring isomorphism from $LS(Me)$ onto $LS(N\Psi(e))$, we may assume that for any $n\geq 1$ there exists some $x\in M$ with $\lVert x\rVert \leq 1$ and $\vertiii{\Psi(x)}\geq n$.\smallskip

\noindent\underline{Step 2}
We prove that for any $a\in LS(\Z(N))_+$ there exists some $x\in M$ with $\lVert x\rVert \leq 1$ and $\vertiii{\Psi(x)}\geq a$. 

Let $a\in LS(\Z(N))_+$.
We may take a sequence of mutually orthogonal central projections $\{ f_n \}$ such that  
$a\leq \sum_{n\geq 1}nf_n$. 
For each $n$, take $x_n \in M$ such that $\lVert x_n\rVert\leq 1$ and $\vertiii{\Psi(x_n)}\geq nf_n$. 
Some calculations show that the element $x:=  \sum_{n\geq 1} x_n\Psi^{-1}(f_n)$
satisfies $\lVert x\rVert\leq 1$ and $\vertiii{\Psi(x)}\geq \sum_{n\geq 1} nf_n \geq a$.\smallskip

\noindent\underline{Step 3} 
We prove: For any $p\in \P(M)$ with $p\sim p^{\perp}$ and any $a\in LS(\Z(N))_+$, there exists an element $x\in M$ with $pxp=x$, $\lVert x\rVert\leq 1$ and $\vertiii{\Psi(x)}\geq a$. 

Take a partial isometry $v\in M$ such that $vv^*=p$ and $v^*v=p^{\perp}$. 
Since $\Psi$ is a ring isomorphism, for any $x\in M$, we have 
\[
\begin{split}
\Psi(x) &= \Psi(pxp+pxp^{\perp}+p^{\perp}xp+p^{\perp}xp^{\perp})\\
&= \Psi(pxp)+\Psi(pxv^*)\Psi(v)+\Psi(v^*)\Psi(vxp)+\Psi(v^*)\Psi(vxv^*)\Psi(v).
\end{split}
\]
For a given $a\in LS(\Z(N))_+$, put 
\[
b:= 4a+4a \vertiii{\Psi(v)}+ 4a \vertiii{\Psi(v^*)}+ 4a\vertiii{\Psi(v)}
\vertiii{\Psi(v^*)}\in LS(\Z(N))_+.
\]
The preceding step implies there exists $x\in M$ with $\lVert x\rVert\leq 1$ and 
\[
\begin{split}
b&\leq \vertiii{\Psi(x)}\\
&\leq \vertiii{\Psi(pxp)}+\vertiii{\Psi(pxv^*)}\vertiii{\Psi(v)}+\vertiii{\Psi(v^*)}\vertiii{\Psi(vxp)}+\vertiii{\Psi(v^*)}\vertiii{\Psi(vxv^*)}\vertiii{\Psi(v)}.
\end{split}
\]
Hence there exists a quadruple $f_1, f_2, f_3, f_4$ of central projections in $N$ such that $f_1+f_2+f_3+f_4=1$, $\vertiii{\Psi(pxp)}f_1\geq bf_1/4$, $\vertiii{\Psi(pxv^*)}\vertiii{\Psi(v)}f_2\geq bf_2/4$, $\vertiii{\Psi(v^*)}\vertiii{\Psi(vxp)}f_3\geq bf_3/4$ and $\vertiii{\Psi(v^*)}\vertiii{\Psi(vxv^*)}\vertiii{\Psi(v)}f_4\geq bf_4/4$. 
Put 
\[
x' := pxp\Psi^{-1}(f_1) + pxv^*\Psi^{-1}(f_2) + vxp\Psi^{-1}(f_3) + vxv^*\Psi^{-1}(f_4). 
\]
Then we have $px'p=x'$, $\lVert x'\rVert \leq 1$ and 
\[
\begin{split}
\vertiii{\Psi(x')}&= \vertiii{\Psi(pxp)f_1 + \Psi(pxv^*)f_2 + \Psi(vxp)f_3 + \Psi(vxv^*)f_4}\\
&= \vertiii{\Psi(pxp)}f_1 + \vertiii{\Psi(pxv^*)}f_2 + \vertiii{\Psi(vxp)}f_3 +\vertiii{\Psi(vxv^*)}f_4\\
&\geq \frac{1}{4}b (f_1 + \vertiii{\Psi(v)}^{-1}f_2 + \vertiii{\Psi(v^*)}^{-1}f_3 + \vertiii{\Psi(v)}^{-1}\vertiii{\Psi(v^*)}^{-1}f_4)\geq a.  
\end{split}
\]
(Note that $\vertiii{\Psi(v)}$, $\vertiii{\Psi(v^*)}$ are invertible in $LS(\Z(N))$.) 
\smallskip

\noindent\underline{Step 4}
Since $M$ is properly infinite, we may take a sequence $\{p_n\}_{n\geq 1}$ of mutually orthogonal projections in $M$ such that $p_n\sim p_n^{\perp}$, $n\geq 1$. 
By Step 3, for each $n\geq 1$, we may take an element $x_n\in M$ with $p_nx_np_n=x_n$, $\lVert x_n\rVert\leq1$ and $\vertiii{\Psi(x_n)}\geq n\vertiii{\Psi(p_n)}$. 
Put $x:=\sum_{n\geq 1} x_n\in M$ (which is well-defined since $p_n$, $n\geq 1$, are mutually orthogonal). 
For every $n\geq 1$, we have 
\[
\vertiii{\Psi(x)}\vertiii{\Psi(p_n)}\geq\vertiii{\Psi(x)\Psi(p_n)}=\vertiii{\Psi(xp_n)} = \vertiii{\Psi(x_n)}\geq n\vertiii{\Psi(p_n)}. 
\]
Since $\vertiii{\Psi(p_n)}$ is invertible in $LS(\Z(N))$, we obtain $\vertiii{\Psi(x)}\geq n$ for all $n\in \N$, a contradiction.
This completes the proof of Claim.\smallskip

\noindent\underline{Step 5}
It follows that there exists an element $a\in LS(\Z(N))_+$ such that $\vertiii{\Psi(x)}\leq a$ for any $x\in M$ with $\lVert x\rVert\leq 1$. 
By the same discussion applied to $\Psi^{-1}$, we also obtain an element $a'\in LS(\Z(M))_+$ such that $\vertiii{\Psi^{-1}(y)}\leq a'$  for any $y\in N$ with $\lVert y \rVert \leq 1$.  
We may take a sequence $e_n$ of central projections in $M$ such that $e_n\nearrow 1$ and $\Psi$ restricts to a norm-bicontinuous ring isomorphism $\Psi_n$ from $M e_n$ onto $N\Psi(e_n)$, $n\geq 1$.
By Lemma \ref{isoms} we may verify the statement for each $\Psi_n$, which suffices to complete the proof. 
\end{proof}

\begin{corollary}
Let $M, N$ be von Neumann algebras of type I$_{\infty}$ or III. 
Suppose that $\Phi\colon \P(M)\to \P(N)$ is a lattice isomorphism. 
Then there exist a real $^*$-isomorphism $\psi\colon M\to N$ and an invertible element $y\in LS(N)$ such that $\Phi(p)=l(y\psi(p))$, $p\in \P(M)$.
\end{corollary}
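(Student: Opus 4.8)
The plan is to chain together Theorem A and Theorem B. Since $M$ is of type I$_{\infty}$ or III, it is properly infinite, and in particular it admits no type I$_1$ nor type I$_2$ direct summand; hence Theorem A applies and produces a ring isomorphism $\Psi\colon LS(M)\to LS(N)$ with $\Phi(l(x))=l(\Psi(x))$ for every $x\in LS(M)$. Restricting attention to projections, this already gives $\Phi(p)=l(\Psi(p))$ for all $p\in\P(M)$.

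Next I would feed this $\Psi$ into Theorem B. As $M$ and $N$ are of type I$_{\infty}$ or III, Theorem B supplies a real $^*$-isomorphism $\psi\colon M\to N$ (which extends to a real $^*$-isomorphism of $LS(M)$ onto $LS(N)$) together with an invertible element $y\in LS(N)$ such that $\Psi(x)=y\psi(x)y^{-1}$ for all $x\in LS(M)$. In particular $\Psi(p)=y\psi(p)y^{-1}$ whenever $p\in\P(M)$.

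The remaining ingredient is the elementary observation that the left support is insensitive to right multiplication by an invertible element of $LS(N)$: if $a\in LS(N)$ and $b\in LS(N)$ is invertible, then for $q\in\P(N)$ one has $q(ab)=ab$ if and only if $qa=a$ (multiply on the right by $b^{-1}$), so $l(ab)=l(a)$. Taking $a=y\psi(p)$ and $b=y^{-1}$ gives $\Phi(p)=l(\Psi(p))=l\bigl(y\psi(p)y^{-1}\bigr)=l\bigl(y\psi(p)\bigr)$, which is precisely the asserted formula.

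I do not expect a genuine obstacle: the corollary is essentially a repackaging of Theorems A and B, and the only points needing a short verification are that type I$_{\infty}$/III rules out type I$_1$ and I$_2$ direct summands (so that Theorem A is applicable) and the left-support identity just noted. If anything, the "hardest" part is merely bookkeeping—checking that $\psi$ indeed lands in $N$ and that $y\psi(p)\in LS(N)$ so that $l(y\psi(p))$ makes sense—both of which are immediate from the statement of Theorem B.
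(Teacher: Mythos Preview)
Your proposal is correct and is exactly the intended derivation: the paper states this corollary immediately after Theorem B with no separate proof, precisely because it follows by combining Theorem A with Theorem B and the trivial identity $l(ab)=l(a)$ for invertible $b$. There is nothing to add.
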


\section{Questions}\label{question}
The author skeptically conjectures that the same as Theorem B holds for type II von Neumann algebras:
\begin{conjecture}
Let $M$ and $N$ be von Neumann algebras of type II. 
Suppose that $\Psi\colon LS(M)\to LS(N)$ is a ring isomorphism. 
Then there exist an invertible operator $y\in LS(N)$ and a real $^*$-isomorphism $\psi\colon M\to N$ such that $\Psi(x) = y\psi(x)y^{-1}$ for any $x\in LS(M)$. 
\end{conjecture}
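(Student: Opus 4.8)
The plan is to follow the architecture of the proof of Theorem~B and to pin down the one step that used type I$_{\infty}$ or III. A ring isomorphism $\Psi\colon LS(M)\to LS(N)$ induces a lattice isomorphism of the projection lattices (Proposition~\ref{converse}), which preserves the classes of central, finite and abelian projections (Section~\ref{extra}), so one reduces at once to the case where $M$ and $N$ are both of type II$_1$ or both of type II$_{\infty}$. In the type II$_{\infty}$ case $M$ is properly infinite, so the scheme of Theorem~B --- build up to a contradiction as in Steps~1--4, then conclude via Lemma~\ref{isoms} as in Step~5 --- remains viable in outline; the missing ingredient is a replacement for the center-valued norm $\vertiii{\cdot}$, which simply does not exist in the type II setting. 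When $M$ is a II$_1$ factor, $LS(\Z(M))=\C$ while $LS(M)=S(M)$ contains unbounded affiliated operators with no finite scalar majorant, so the homogeneity $\vertiii{ax}=\lvert a\rvert\,\vertiii{x}$ and the submultiplicativity $\vertiii{xy}\le\vertiii{x}\,\vertiii{y}$ that power Steps~1--4 have no analogue. In the type II$_1$ case $M$ is moreover finite, so the properly-infinite halving behind Step~4 also disappears; one is then thrown back on $S(M)=LS(M)$ as a von Neumann regular ring --- with $\P(M)$ a continuous geometry and von Neumann's coordinatization \cite[Part~II]{N} available --- and must classify ring automorphisms of $S(M)$ by other means.

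The substitute I would attempt is the center-valued \emph{trace}, which does survive on any finite von Neumann algebra: for $M$ of type II$_1$ there is a faithful normal center-valued trace $\mathrm{tr}$, extending to the positive cone of $S(M)$ with values in the extended positive part of $LS(\Z(M))$. One would try to prove that $\Psi$ carries the ``$\mathrm{tr}$-unit ball'' $\{x\in M:\mathrm{tr}(x^*x)\le 1\}$ into a $\mathrm{tr}$-bounded subset of $LS(N)$, and then run the bootstrap of Theorem~B with $\mathrm{tr}$ playing the role of $\vertiii{\cdot}$ (for type II$_1$ an extra device would still be needed to stand in for Step~4's use of proper infiniteness). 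The hard part --- and where I expect the real difficulty to sit --- is that a ring isomorphism need neither respect the trace nor be continuous for the measure topology, so \emph{a priori} nothing forces the image of a trace-bounded set to be trace-bounded; the required control would have to be extracted purely from the ring- and lattice-theoretic data --- idempotents, LS-orthogonality, coordinatizing matrix units --- in the spirit of Section~\ref{main}.

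Finally, a word on whether the conjecture is even true --- I am not certain, and before investing in the programme above I would first look for wild ring automorphisms of $S(M)$ for a II$_1$ factor $M$, in analogy with the wild automorphisms of $\C$ and of $L^0(\mu)$ recorded after Proposition~\ref{I}; note that Lemma~\ref{isoms}(2) does not apply here, since $S(M)$ is not a von Neumann algebra, so no automatic real-linearity is at hand. If such an automorphism exists and fails to have the form $y\psi(\cdot)y^{-1}$ with $\psi$ a real $^*$-isomorphism, the conjecture is false; if one can show that none exists, that is essentially the content of the conjecture, and the trace-substitute programme is the natural route to it.
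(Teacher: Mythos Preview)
This statement is a \emph{conjecture} in the paper, not a theorem; the paper offers no proof and explicitly leaves the type II case open, remarking that even real-linearity of such a $\Psi$ is unknown for II$_1$ factors. So there is no ``paper's own proof'' against which to compare your proposal.

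Your write-up is not a proof but a research outline, and you are candid about this. You correctly isolate the two points at which the Theorem~B argument breaks: the nonexistence of a center-valued norm on $LS(M)$ in type II (so Steps~1--4 lose their engine), and the absence of proper infiniteness in the II$_1$ case (so Step~4 has no halving trick even if a norm substitute were found). Your proposed substitute --- the center-valued trace --- is the natural thing to try, but as you yourself note, nothing in the ring-isomorphism hypothesis forces trace-boundedness to be preserved, and you give no mechanism to extract such control from the lattice data; that step is a genuine gap, not a technicality. Your closing paragraph, suggesting one first hunt for wild ring automorphisms of $S(M)$ for a II$_1$ factor, is exactly in line with the paper's own skepticism and its analogy with the $L^0(\mu)$ examples.

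In short: your diagnosis of the obstacles matches the paper's, your proposed programme is reasonable but incomplete at its crux, and the paper itself does not resolve the question.
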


Not much is known about the structure of the algebra $LS(M)$ for a type II (in particular, II$_1$) von Neumann algebra $M$. 
The author does not know whether or not such a $\Psi$ is automatically real-linear even in the case $M$ and $N$ are (say, approximately finite dimensional) II$_1$ factors. 
Note that $LS(M)$ cannot have a Banach algebra structure because of the fact that an element of $LS(M)$ can have an empty or dense spectral set. 
Hence it seems to be difficult to make use of \emph{automatic continuity} results on algebra isomorphisms as in \cite{Da}. 
However, the author suspects that at least the following weaker statement holds: 
\begin{conjecture}
Let $M$ and $N$ be von Neumann algebras of type II. 
If $\P(M)$ and $\P(N)$ are lattice isomorphic, or equivalently, if $LS(M)$ and $LS(N)$ are ring isomorphic, then $M$ and $N$ are real $^*$-isomorphic (or equivalently, $M$ and $N$ are Jordan $^*$-isomorphic).  
\end{conjecture}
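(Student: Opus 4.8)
The plan is to reduce to type II$_1$ algebras and then to try to reconstruct the von Neumann algebra, or at least its Jordan structure, from the ring $LS(M)$. Any ring isomorphism $\Psi\colon LS(M)\to LS(N)$ carries the Boolean algebra of central projections of $M$ onto that of $N$ and, by the argument in the lemma preceding Proposition \ref{I}, preserves finiteness and abelianness of projections; hence it splits as a direct sum of a ring isomorphism between the type II$_1$ summands and one between the type II$_\infty$ summands. For a type II$_\infty$ summand $M$, choose a finite projection $p\in M$ of central support $1$. Then $pMp$ is type II$_1$ with centre $\cong\Z(M)$, its projection lattice is the interval $[0,p]\subset\P(M)$, and the lattice isomorphism $\Phi$ of Proposition \ref{converse} sends $p$ to a finite projection $q=\Phi(p)$ of central support $1$ (finiteness is preserved, and central supports are lattice-theoretic). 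From a real $^*$-isomorphism $pMp\to qNq$ one recovers a real $^*$-isomorphism $M\to N$ once one checks that a type II$_\infty$ algebra is determined up to real $^*$-isomorphism by such a type II$_1$ corner together with a suitable lattice invariant measuring how many copies of $p$ fill up $1$. I expect this bookkeeping to be routine, so the essential case is $M,N$ type II$_1$, with $LS(M)=S(M)$ and $LS(N)=S(N)$ the $^*$-regular rings of affiliated operators.

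In that case a certain amount of structure is free. The centre of $M$ is recovered from its Boolean algebra of central projections, so $\Z(M)\cong\Z(N)$ as commutative von Neumann algebras. Moreover, exactly as in the proof of Lemma \ref{sim}, $\Phi$ preserves Murray--von Neumann equivalence of projections, hence preserves the (centre-valued) dimension function up to the induced isomorphism of centres; consequently $\Psi$ is, up to that isomorphism, an isometry for the rank metric $d(x,y)=\dim l(x-y)$ on $S(M)$ and $S(N)$. The step that would finish the proof is to produce from $\Psi$ a ring isomorphism between $M$ and $N$ themselves: by Lemma \ref{isoms}(2) and (3) any ring isomorphism $M\to N$ yields a real $^*$-isomorphism $M\to N$, and we would be done. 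The natural route is to give a ring-theoretic description of the subring $M\subset S(M)$ that is invariant under all ring automorphisms, so that $\Psi$ restricts to it.

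This last step is where I expect the argument to stall, and it is why the statement is only conjectural. For type I$_\infty$ and type III the analogous reconstruction is exactly what the proof of Theorem B achieves, via the center-valued norm $\vertiii{\cdot}$ and the uniform bound on $\vertiii{\Psi(x)}$ over the unit ball of $M$ obtained in Steps 1--5. For type II there is no center-valued norm (Subsection \ref{cvn}), and the obvious surrogates are inadequate: the rank metric is blind to scalar multiplication (for instance $\{n\cdot 1:n\ge 1\}$ has rank-diameter $\le 2$), so rank-boundedness does not detect boundedness, and $S(M)$ carries no Banach-algebra norm, so automatic-continuity theorems of the kind used in \cite{Da} are unavailable. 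Concretely, it is not known whether a ring automorphism of $S(M)$ must be real-linear --- equivalently, must map $M$ onto $M$ --- even when $M$ is the hyperfinite II$_1$ factor; a positive answer there, or more generally any automorphism-invariant ring- or lattice-theoretic characterisation of boundedness, would prove the conjecture, and I regard isolating such a characterisation as the heart of the problem. Failing that, one could instead look for a coarser invariant of the ring $S(M)$ --- some trace-theoretic or approximation-theoretic feature robust under wild ring isomorphisms --- able to separate non-isomorphic type II algebras; at present no such invariant is known, which is the source of the author's scepticism.
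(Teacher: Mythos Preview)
The statement is a \emph{conjecture}: the paper does not prove it, and explicitly presents it as open (the author even calls himself ``skeptical''). So there is no proof in the paper to compare against, and any honest attempt must end by locating the obstruction rather than by completing an argument.

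Your proposal does exactly this, and your analysis of the obstruction is essentially the same as the paper's. You correctly observe that the method of Theorem~B fails because type~II algebras admit no center-valued norm (Subsection~\ref{cvn}), that $S(M)$ carries no Banach-algebra structure so automatic-continuity results in the style of \cite{Da} are unavailable, and that it is not known whether a ring automorphism of $S(M)$ is real-linear even for the hyperfinite II$_1$ factor. All of this matches the paper's discussion in Section~\ref{question} almost verbatim. Your additional remark that the rank metric is blind to scalar multiplication is a nice concrete illustration of why the obvious surrogate for $\vertiii{\cdot}$ fails.

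Your reduction from II$_\infty$ to II$_1$ is sound in spirit but deserves one caution: you assume that once $pMp\cong qNq$ as real $^*$-algebras, recovering $M\cong N$ is ``routine bookkeeping''. This does work, but it requires knowing that the lattice isomorphism preserves the center-valued multiplicity of $1$ over $p$ (not just that $p$ and $q$ are finite with full central support); this follows from preservation of Murray--von Neumann equivalence, which you have, together with a decomposition of $1$ into orthogonal copies of $p$. So the reduction is valid, and the conjecture genuinely reduces to the II$_1$ case---but that case is precisely where neither you nor the paper can proceed.
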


In another direction, we compare Theorem A with von Neumann's theory of complemented modular lattices and regular rings. 
Von Neumann axiomatized projection lattices of type II$_1$ von Neumann algebras, and completed the amazing theory on the correspondence between the vast classes of complemented modular lattices and regular rings. 
Let us briefly recall this theory in \cite[Part II]{N}. 
\begin{definition}
A lattice $L$ with greatest element $1$ and least element $0$ is \emph{complemented} if for each $a\in L$ there exists $b\in L$ such that $a\vee b=1$, $a\wedge b=0$. 
A lattice $L$ is \emph{modular} if the equation $(a\vee b)\wedge c=a\vee (b\wedge c)$ holds for any $a, b, c\in L$ with $a\leq c$. 

Let $L$ be a complemented modular lattice.
Two elements $a, b\in L$ are said to be \emph{perspective} if there exists $c\in L$ such that $a\vee c=1=b\vee c$ and $a\wedge c=0=b\wedge c$. 
Let $n$ be a positive integer. 
We say $L$ has \emph{order $n$} if there exist pairwise perspective elements $a_1, a_2, \ldots, a_n\in L$ with $a_1\vee a_2\vee\cdots\vee a_n=1$ and $\bigl(\bigvee_{i\in I_1}a_i\bigr) \wedge \bigl(\bigvee_{j\in I_2}a_j\bigr)=0$ for any disjoint $I_1, I_2\subset \{1, 2, \ldots, n\}$.
\end{definition}

\begin{definition} 
A (\emph{von Neumann}) \emph{regular ring} is a ring $R$ with unit such that for each $x\in R$ there exists $y\in R$ such that $xyx=x$. 

Let $R$ be a regular ring. 
A right ideal $\mathfrak{a}$ of $R$ is \emph{principal} if it is generated by one element of $R$. 
\end{definition}
Let $M$ be a von Neumann algebra. 
Then $\P(M)$ is a complemented lattice. 
It is not difficult to show that the following three conditions are equivalent. 
\begin{itemize}
\item The von Neumann algebra $M$ is finite.
\item The lattice $\P(M)$ is modular. 
\item The ring $LS(M)$ is regular. 
\end{itemize}

\begin{theorem}[von Neumann]
If $R$ is a regular ring, then the collection $L$ of all principal right ideals of $R$, ordered by inclusion, forms a complemented modular lattice. 
\end{theorem}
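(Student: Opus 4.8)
The plan is to realise $L$ as a sublattice of the lattice of all right ideals of $R$ and then use regularity to control sums and intersections. First I would record the standard dictionary between principal right ideals and idempotents: if $xyx=x$ then $e:=xy$ is an idempotent with $xR=eR$, so every principal right ideal has the form $eR$ with $e^2=e$; conversely, for any idempotent $e$ one has the module decomposition $R=eR\oplus(1-e)R$, and every direct summand of $R_R$ is of the form $eR$ for an idempotent $e$ (from a decomposition $R=\mathfrak a\oplus\mathfrak b$ one extracts $1=e+f$ with $e\in\mathfrak a$, $f\in\mathfrak b$, and checks $\mathfrak a=eR$, $e^2=e$). In particular $R=1\cdot R$ and $0=0\cdot R$ lie in $L$, which supplies the greatest and least elements required of a complemented lattice.

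Next I would show that $L$ is closed under the formation of finite sums and of intersections of right ideals; since $\mathfrak a+\mathfrak b$ (resp.\ $\mathfrak a\cap\mathfrak b$) is the smallest (resp.\ largest) right ideal containing (resp.\ contained in) both, this identifies the join and meet in $L$ with $+$ and $\cap$. For sums, given $\mathfrak a=eR$ and $\mathfrak b=fR$ one checks $eR+fR=eR+(1-e)fR$; the ideal $(1-e)fR$ is principal, say $(1-e)fR=gR$ with $g^2=g$, and since $gR\subseteq(1-e)R$ one has $eg=0$, after which replacing $g$ by $g-ge$ arranges $eg=ge=0$ without changing $gR$, so that $e+g$ is an idempotent and $eR+fR=(e+g)R\in L$. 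For intersections, consider the $R$-module epimorphism $\phi\colon fR\to(1-e)fR$, $\phi(x)=(1-e)x$, whose kernel is exactly $fR\cap eR$; since $(1-e)fR$ is a principal right ideal, it is a direct summand of $R_R$ and hence projective, so $\phi$ splits, $fR\cap eR$ is a direct summand of $fR$, hence a direct summand of $R_R$, hence principal.

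It remains to check that $L$ is complemented and modular. For complementation, $(1-e)R$ is a complement of $eR$: their sum is $R$, and if $er=(1-e)s$ then $er=e(1-e)s=0$, so their intersection is $0$. For modularity, let $\mathfrak a\subseteq\mathfrak c$ in $L$; the inclusion $\mathfrak a+(\mathfrak b\cap\mathfrak c)\subseteq(\mathfrak a+\mathfrak b)\cap\mathfrak c$ is immediate, and conversely if $x=a+b$ with $a\in\mathfrak a$, $b\in\mathfrak b$ and $x\in\mathfrak c$, then $b=x-a\in\mathfrak c$ (as $a\in\mathfrak a\subseteq\mathfrak c$), so $b\in\mathfrak b\cap\mathfrak c$ and $x\in\mathfrak a+(\mathfrak b\cap\mathfrak c)$. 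This is just the modular law for submodules of $R_R$, valid over an arbitrary ring; combined with the identification of $\vee,\wedge$ with $+,\cap$ from the previous paragraph, it shows $L$ is modular.

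The only genuinely non-formal point — and the place where von Neumann regularity enters beyond producing idempotent generators — is the closure of $L$ under intersections: the argument hinges on the projectivity of $(1-e)fR$ (equivalently, on its being a direct summand of $R_R$) and the ensuing splitting of $\phi$. Everything else is either a one-line idempotent manipulation or the modular law for submodule lattices, which requires nothing special of the ring.
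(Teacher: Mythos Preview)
The paper does not supply a proof of this theorem; it is merely quoted as a classical result of von Neumann with a reference to \cite{N}, so there is nothing to compare against directly. Your argument is correct and is essentially the standard proof one finds in the regular-ring literature: the dictionary between principal right ideals and idempotents, the idempotent manipulation $g\mapsto g-ge$ to obtain orthogonal idempotents for the join, the splitting argument via projectivity of $(1-e)fR$ for the meet, and the observation that modularity is inherited from the submodule lattice of $R_R$ once $\vee$ and $\wedge$ are identified with $+$ and $\cap$. One minor stylistic remark: in the intersection step you could equally well argue directly with idempotents (write $(1-e)fR=gR$ with $g=g^2$ and $eg=ge=0$ as before, then $h:=f-fg$ is idempotent with $hR=fR\cap(1-g)R$, and a short check gives $fR\cap eR=fR\cap(1-g)R$), which avoids invoking projectivity and keeps everything at the level of elementary ring calculations; but the projectivity route you chose is perfectly valid and arguably more conceptual.
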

We call $L$ in the statement of the preceding theorem the \emph{right ideal lattice of $R$}. 
\begin{theorem}[von Neumann]
Let $R_1, R_2$ be regular rings with right ideal lattices $L_1, L_2$, respectively. Suppose that $L_1$ has order $n\geq 3$. If $\Phi\colon L_1\to L_2$ is a lattice isomorphism, then there exists a unique ring isomorphism $\Psi\colon R_1\to R_2$ such that $\Phi(\mathfrak{a})=\Psi(\mathfrak{a})$, $\mathfrak{a}\in L_1$. 
\end{theorem}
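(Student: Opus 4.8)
The plan is to reconstruct each ring $R_i$ from its right ideal lattice $L_i$, once a coordinate frame has been fixed, using only the lattice operations, and then to observe that $\Phi$ carries a frame of $L_1$ to a frame of $L_2$. This is von Neumann's coordinatization of complemented modular lattices of order $\ge 3$ (cf.\ \cite[Part II]{N}), which applies here because the $L_i$ are right ideal lattices of rings (hence automatically Arguesian).

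First I would fix in $L_1$ a frame of order $n$: pairwise perspective $a_1,\dots,a_n$ with $a_1\vee\dots\vee a_n=1$ and $\bigl(\bigvee_{i\in I_1}a_i\bigr)\wedge\bigl(\bigvee_{j\in I_2}a_j\bigr)=0$ for disjoint $I_1,I_2\subseteq\{1,\dots,n\}$, which exists since $L_1$ has order $n\ge 3$, together with axes of perspectivity $c_{1k}$, $2\le k\le n$, relating $a_1$ and $a_k$. Since $R_1$ is regular, $a_k=e_kR_1$ for an idempotent $e_k$, and using modularity one may arrange the $e_k$ orthogonal with $\sum_ke_k=1$ and the $c_{1k}$ to be graphs of elements implementing module isomorphisms $e_1R_1\cong e_kR_1$; this yields a system of matrix units and an identification $R_1\cong\M_n(e_1R_1e_1)$. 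As $\Phi$ preserves $\vee$, $\wedge$, $0$ and $1$, the images $a_k':=\Phi(a_k)$ and $c_{1k}':=\Phi(c_{1k})$ form a frame with axes in $L_2$, giving $R_2\cong\M_n(f_1R_2f_1)$ analogously.

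The central step is von Neumann's auxiliary ring $\mathcal{R}(L,\{a_k\},\{c_{1k}\})$, whose underlying set is $\{x\in L\mid x\vee a_2=a_1\vee a_2,\ x\wedge a_2=0\}$ and whose ring operations are given by explicit formulas in $\vee$, $\wedge$ and the $a_k$, $c_{1k}$. For $L=L_i$ one verifies that this regular ring is canonically isomorphic to the corner $e_1R_ie_1$ (an element in ``position $(1,2)$'' being the graph of left multiplication by an element of $e_1R_ie_2\cong e_1R_ie_1$), and that the matrix units above realize $R_i\cong\M_n(\mathcal{R}(L_i))$ compatibly with the passage to principal right ideals. Since $\Phi$ preserves all lattice operations and carries the frame and axes of $L_1$ to those of $L_2$, it restricts to a bijection $\mathcal{R}(L_1)\to\mathcal{R}(L_2)$ preserving the ring operations --- a ring isomorphism --- which, transported through $R_i\cong\M_n(\mathcal{R}(L_i))$, gives a ring isomorphism $\Psi\colon R_1\to R_2$. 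A routine verification on a lattice-generating set of $L_1$ (the frame, the axes, and the position-$(1,q)$ graphs of elements of $\mathcal{R}(L_1)$), using that both $\Phi$ and pushforward by $\Psi$ are lattice isomorphisms, then gives $\Psi(\mathfrak{a})=\Phi(\mathfrak{a})$ for all $\mathfrak{a}\in L_1$.

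For uniqueness, if $\Psi,\Psi'$ both satisfy the conclusion, then $\theta:=(\Psi')^{-1}\circ\Psi$ is a ring automorphism of $R_1$ fixing every principal right ideal setwise. For an idempotent $e$, $\theta$ fixes $eR_1$ and $(1-e)R_1$; writing $f:=\theta(e)$ one gets $ef=f$ and $fe=e$ from $fR_1=eR_1$, whereupon $(1-f)R_1=(1-e)R_1$ forces $f=e$ by a short computation. Hence $\theta$ fixes every idempotent of $R_1$, and since each $e_{pp}$, each $e_{pp}+e_{pq}$ and each $e_{pp}+se_{pq}$ ($s\in e_1R_1e_1$) is idempotent, additivity of $\theta$ forces $\theta=\mathrm{id}$, so $\Psi=\Psi'$. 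I expect the main obstacle to be the central step: setting up the auxiliary ring and the compatible identification $R_i\cong\M_n(\mathcal{R}(L_i))$ is exactly von Neumann's intricate Part II development, and it is there that the hypothesis ``order $\ge 3$'' --- together with the automatic coordinatizability of right ideal lattices --- is genuinely used.
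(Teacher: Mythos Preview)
The paper does not supply its own proof of this statement: it is quoted in Section~\ref{question} as one of three classical theorems of von Neumann from \cite[Part II]{N}, with no argument given. Your sketch is exactly the von Neumann coordinatization programme (frame plus axes, auxiliary ring built from lattice elements in position $(1,2)$, matrix identification $R_i\cong\M_n(\mathcal{R}(L_i))$, transport of the induced ring isomorphism), so it agrees with the source the paper cites rather than with anything the paper itself does.

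It is worth noting the parallel with Section~\ref{main}: the paper's proof of Theorem~A is precisely an adaptation of this coordinatization to projection lattices of general von Neumann algebras, with $LS(\hat{M})$ playing the role of your auxiliary ring $\mathcal{R}(L)$, the projections $P_{12}[x]$ playing the role of your ``graphs in position $(1,2)$'', Lemma~\ref{multiplicative} giving the lattice formula for multiplication, and LS-orthogonality replacing the purely lattice-theoretic characterization of complements that works in the modular (finite) case. Your uniqueness argument via idempotents is the regular-ring analogue of the paper's Lemma~\ref{id}. So while the paper does not prove the theorem you were asked about, your outline is the correct one and is visibly the template on which the paper's own main argument is built.
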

\begin{theorem}[von Neumann]
Let $L$ be a complemented modular lattice with order $n\geq 4$. 
Then there exists a regular ring $R$ such that the right ideal lattice of $R$ is lattice isomorphic to $L$. 
\end{theorem}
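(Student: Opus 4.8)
The plan is to reconstruct a coordinate ring from the lattice, following von Neumann's coordinatization argument in \cite[Part II]{N}. Since $L$ has order $n\geq 4$, first fix pairwise perspective elements $a_1,\dots,a_n$ with $a_1\vee\cdots\vee a_n=1$ and $\bigl(\bigvee_{i\in I}a_i\bigr)\wedge\bigl(\bigvee_{j\in J}a_j\bigr)=0$ for disjoint $I,J\subset\{1,\dots,n\}$. For each pair $i\neq j$ also fix an axis of perspectivity $c_{ij}$ between $a_i$ and $a_j$, i.e.\ an element of the interval $[0,a_i\vee a_j]$ with $a_i\vee c_{ij}=a_i\vee a_j=a_j\vee c_{ij}$ and $a_i\wedge c_{ij}=0=a_j\wedge c_{ij}$. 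Using modularity one normalizes these axes so that $a_1,\dots,a_n$ together with $c_{12},\dots,c_{1n}$ form a genuine von Neumann $n$-frame; this normalization step is routine but needs care.

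Next I would build the coordinate ring $S$. Let $S$ be the set of elements of $[0,a_1\vee a_2]$ that are simultaneously complements of $a_1$ and of $a_2$ there --- the elements ``in general position'' with respect to $a_1$ and $a_2$. I would define addition and multiplication on $S$ entirely in lattice-theoretic terms: one transports elements between the intervals $[0,a_i\vee a_j]$ by means of the axes $c_{ij}$ and recombines them with $\vee$ and $\wedge$, using a third coordinate to encode addition, with $a_1$ playing the role of $0_S$ and $c_{12}$ that of $1_S$. Verifying that these partial operations fit together into an associative unital ring is the technical core of the proof: one introduces the family of ``translation'' sets $L_{ij}$ and the composition maps $L_{ij}\times L_{jk}\to L_{ik}$, and deduces associativity of multiplication and the two distributive laws from a long chain of modular-lattice identities. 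It is exactly here that the hypothesis $n\geq 4$ enters --- the bookkeeping for an identity such as $(xy)z=x(yz)$ requires four frame elements $a_1,a_2,a_3,a_4$ in pairwise distinct intervals --- and I expect re-establishing these identities to be the main obstacle of the whole argument. (For $n=3$ the construction only yields the coordinatization of a projective plane, which may fail to be Desarguesian, so no associative ring is available.)

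Once $S$ is seen to be a ring, I would check that it is von Neumann regular: given $x\in S$, a complement of the corresponding lattice element inside a suitable interval, transported back through the axes, produces $y\in S$ with $xyx=x$; this uses only that $L$ is complemented. Finally, put $R:=\M_n(S)$, which is regular because $S$ is, and whose lattice of principal right ideals is isomorphic to the lattice of finitely generated submodules of the free right $S$-module $S^n$. I would define a map from $L$ to this submodule lattice by sending $x\in L$ to the ``coordinate submodule'' encoding the position of $x$ relative to the frame, and then show it is a bijection preserving order in both directions; injectivity and surjectivity rest on the fact that every element of $L$ is built from $a_1,\dots,a_n$ and the axes $c_{ij}$ by finitely many lattice operations, which follows from complementedness together with the perspectivity structure. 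This exhibits $L$ as the right ideal lattice of the regular ring $R=\M_n(S)$, as required.
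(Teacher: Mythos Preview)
The paper does not prove this theorem. It is stated in Section~\ref{question} purely as historical background, attributed to von Neumann with a reference to \cite[Part II]{N}; no argument is given. So there is nothing in the paper to compare your sketch against, beyond noting that the approach you outline is indeed von Neumann's coordinatization.

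That said, your sketch contains a slip that would derail the construction if taken literally. You define $S$ as the set of elements of $[0,a_1\vee a_2]$ that are \emph{simultaneously} complements of $a_1$ and of $a_2$, and then declare that $a_1$ plays the role of $0_S$. But $a_1$ is not a complement of $a_1$ in $[0,a_1\vee a_2]$ (we have $a_1\wedge a_1=a_1\neq 0$), so $a_1\notin S$ under your definition. In fact, the common complements of $a_1$ and $a_2$ are precisely the axes of perspectivity, which correspond only to the \emph{units} of the coordinate ring, not to all its elements. The correct definition is that $S$ (equivalently, $L_{12}$) consists of the complements of $a_2$ alone in $[0,a_1\vee a_2]$; then $a_1$ belongs to $S$ and serves as zero, $c_{12}$ serves as the identity, and the non-invertible ring elements are present as well. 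With this correction the remainder of your outline---lattice-theoretic addition and multiplication using a third (and, for associativity, fourth) frame element, regularity of $S$ from complementedness, and the passage to $R=\M_n(S)$---matches von Neumann's argument.
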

Let $M$ be a finite von Neumann algebra. 
Let $\mathfrak{a}\subset LS(M)$ be a principal right ideal generated by $a\in LS(M)$. 
It is an easy exercise to show that $\mathfrak{a}=\{x\in LS(M)\mid l(x)\leq l(a)\}$. 
Hence we obtain an identification of the right ideal lattice of $LS(M)$ with the projection lattice $\P(M)$. 
In particular, Theorem \ref{vN} is a corollary of von Neumann's results above. 
See also the article \cite{Go} by Goodearl, which deals with the history of the study of regular rings in connection with functional analysis.

Von Neumann's theory, applied to the setting of von Neumann algebras, is valid only for finite von Neumann algebras. 
In this paper, we proved that there exists a complete correspondence between lattice isomorphisms and ring isomorphisms in the general setting of von Neumann algebras. 
Hence the author believes that one might be able to generalize von Neumann's theory to a broader class of lattices that covers projection lattices of any von Neumann algebras (of fixed order $n\geq 3$ or $4$). 
This is left as a research program in the future.\medskip

\textbf{Acknowledgements} \quad 
The author appreciates Yasuyuki Kawahigashi, who is the advisor of the author, for invaluable support. 
The author is also indebted to Peter \v{S}emrl and Yuhei Suzuki, who encouraged the author to begin working on lattice isomorphisms. 
This work was supported by Leading Graduate Course for Frontiers of Mathematical Sciences and Physics (FMSP) and JSPS Research Fellowship for Young Scientists (KAKENHI Grant Number 19J14689), MEXT, Japan.  
Part of this work was completed during the author's visit to Gilles Pisier in IMJ-PRG, Paris. 
This visit was supported by JSPS Overseas Challenge Program for Young Researchers.


\begin{thebibliography}{00}
\bibitem{AAKD} S. Albeverio, S. Ayupov, K. Kudaybergenov, R. Djumamuratov, Automorphisms of central extensions of type I von Neumann algebras. \emph{Studia Math.} \textbf{207} (2011), no. 1, 1--17. 
%\bibitem{CG85} J.B. Conway and T.A. Gillespie, Is a selfadjoint operator determined by its invariant subspace lattice? \emph{J. Funct. Anal.} \textbf{64} (1985), no. 2, 178--189. 
%\bibitem{CG86} J.B. Conway and T.A. Gillespie, Is an isometry determined by its invariant subspace lattice? \emph{J. Operator Theory} \textbf{22} (1989), no. 1, 31--49. 
\bibitem{Da} H.G. Dales, ``Banach algebras and automatic continuity'', London Mathematical Society Monographs, New Series, 24, Oxford Science Publications, The Clarendon Press, Oxford University Press, New York (2000). 
\bibitem{Dy} H.A. Dye, On the geometry of projections in certain operator algebras. 
\emph{Ann. of Math.} (2) \textbf{61} (1955), 73--89. 
\bibitem{Fe} J. Feldman, Isomorphisms of finite type II rings of operators. \emph{Ann. of Math.} (2) \textbf{63} (1956), 565--571. 
%\bibitem{Fi} P.A. Fillmore, Perspectivity in projection lattices. \emph{Proc. Amer. Math. Soc.} \textbf{16} (1965), 383--387. 
\bibitem{FL} P.A. Fillmore and W.E. Longstaff, On isomorphisms of lattices of closed subspaces. \emph{Canad. J. Math.} \textbf{36} (1984), no. 5, 820--829. 
\bibitem{Ga} L.T. Gardner, On isomorphisms of C$^*$-algebras. \emph{Amer. J. Math.} \textbf{87} (1965), 384--396. 
\bibitem{Go} K.R. Goodearl, Von Neumann regular rings: connections with functional analysis. \emph{Bull. Amer. Math. Soc.} (N.S.) \textbf{4} (1981), no. 2, 125--134. 
\bibitem{H} P.R. Halmos, Two subspaces. \emph{Trans. Amer. Math. Soc.} \textbf{144} (1969), 381--389. 
\bibitem{KR} R.V. Kadison and J.R. Ringrose, ``Fundamentals of the theory of operator algebras. Vol. II'', Academic Press, Inc., Orlando, FL (1986). 
\bibitem{KM} S. Kakutani and G.W. Mackey, Ring and lattice characterization of complex Hilbert space, \emph{Bull. Amer. Math. Soc.} \textbf{52} (1946), 727--733. 
\bibitem{Ka} I. Kaplansky, Ring isomorphisms of Banach algebras. \emph{Canadian J. Math.} \textbf{6} (1954), 374--381. 
%\bibitem{Ke} L. K\'erchy, Isometries with isomorphic invariant subspace lattices. \emph{J. Funct. Anal.} \textbf{170} (2000), no. 2, 475--511. 
\bibitem{Ku} A.G. Kusraev, Automorphisms and derivations in an extended complex $f$-algebra. \emph{Sibirsk. Mat. Zh.} \textbf{47} (2006), no. 1, 97--107; translation in \emph{Siberian Math. J.} \textbf{47} (2006), no. 1, 77--85. 
\bibitem{Ma} W.S. Martindale III, When are multiplicative mappings additive? \emph{Proc. Amer. Math. Soc.} \textbf{21} (1969), 695--698. 
\bibitem{Mc} M. McAsey, Spatial implementation of lattice isomorphisms. Selfadjoint and nonselfadjoint operator algebras and operator theory (Fort Worth, TX, 1990), 113--115, Contemp. Math., 120, Amer. Math. Soc., Providence, RI (1991). 
%\bibitem{Mol} L. Moln\'ar, ``Selected preserver problems on algebraic structures of linear operators and on function spaces'', Lecture Notes in Mathematics, 1895, Springer-Verlag, Berlin (2007).
\bibitem{Mor1}  M. Mori, Isometries between projection lattices of von Neumann algebras. \emph{J. Funct. Anal.} \textbf{276} (2019), no. 11, 3511--3528.
\bibitem{Mor2} M. Mori, Order Isomorphisms of Operator Intervals in von Neumann Algebras. Integral Equations Operator Theory 91 (2019), no. 2, Art. 11, 26 pp. 
\bibitem{MN} F.J. Murray and J. von Neumann, On rings of operators. \emph{Ann. of Math.} (2) \textbf{37} (1936), no. 1, 116--229. 
\bibitem{N} J. von Neumann, ``Continuous geometry'', Foreword by Israel Halperin,  Princeton Mathematical Series, No. 25 Princeton University Press, Princeton, N.J. (1960). 
\bibitem{O} T. Okayasu, A structure theorem of automorphisms of von Neumann algebras. \emph{Tohoku Math. J.} (2) \textbf{20} (1968), 199--206. 
\bibitem{P} G. Pisier, ``Tensor Products of C$^*$-algebras and Operator Spaces'', Cambridge University Press (2020). 
\bibitem{Sa} S. Sakai, ``C$^*$-algebras and W$^*$-algebras'', Ergebnisse der Mathematik und ihrer Grenzgebiete, Band 60. Springer-Verlag, New York-Heidelberg (1971). 
\bibitem{Se} I.E. Segal, A non-commutative extension of abstract integration. \emph{Ann. of Math.} (2) \textbf{57} (1953), 401--457. 
\bibitem{Y} F.J. Yeadon, Convergence of measurable operators. \emph{Proc. Cambridge Philos. Soc.} \textbf{74} (1973), 257--268. 
\end{thebibliography}
\end{document}